\pgfplotsset{compat=1.11}
\newcolumntype{e}{>{\displaystyle}r @{\,} >{\displaystyle}c @{\,} >{\displaystyle}l}
\newcommand{\mcup}{\textstyle \bigcup\limits}
\theoremstyle{plain}
\newtheorem{Theorem}{Theorem}[section]
\newtheorem{Corollary}[Theorem]{Corollary}
\newtheorem{Lemma}[Theorem]{Lemma}
\newtheorem{Proposition}[Theorem]{Proposition}
\newtheorem{Question}[Theorem]{Question}
\newtheorem{Claim}[Theorem]{Claim}
\theoremstyle{definition}
\newtheorem{Definition}[Theorem]{Definition}
\newtheorem{Remark}[Theorem]{Remark}
\newcounter{Condition}
\newtheorem{Condition}[Condition]{Condition}
\renewcommand{\P}{\mathbb{P}}
\newcommand{\R}{\mathbb{R}}
\newcommand{\N}{\mathbb{N}}
\renewcommand{\S}{\mathcal{S}}
\newcommand{\V}{\mathcal{V}}
\DeclareMathOperator{\diam}{diam}
\DeclareMathOperator{\Range}{Range}
\numberwithin{equation}{section}
\newcommand{\crab}[3]{%
  \draw[color=white, very thick] (#1, #2) circle (1.3);
  \draw (#1, #2) circle (1.3);
  \draw[fill=gray] (#1 - .3, #2 - 1.6) circle (.05);
  \draw[fill=gray] (#1 - 1, #2 + .2) circle (.05);
  \draw[fill=gray] (#1 + .3, #2 - 1.5) circle (.05);
  \draw[fill=gray] (#1 + 1.4, #2 - 1) circle (.05);
  \node[below,right] at (#1, #2 + .1) {\scriptsize $x_{_{#3}}$};
  \draw[fill=black] (#1, #2) circle (.02);
  \draw[fill=gray] plot [smooth cycle] coordinates { (#1 - .3, #2 - .3) (#1 - .6, #2) (#1 - .7, #2 - .2) (#1 - .45, #2 - .4)
        (#1 - .2, #2 - .6) (#1 - .1, #2 - .4) };
  \node[below,left] at (#1 - .55, #2 - .3) {\scriptsize $A^{^0}_{_{#3}}$};
  \draw[fill=gray] plot [smooth cycle] coordinates { (#1 + .4, #2 - .2) (#1 + .7, #2) (#1 + .7, #2 - .3) (#1 + .6, #2 - .4)
        (#1 + .2, #2 - .7) (#1 + .3, #2 - .4) };
  \node[below,right] at (#1 + .45, #2 - .5) {\scriptsize $A^{^1}_{_{#3}}$};
}
\def\htrone{.4}
\def\htrtwo{.4}
  \newcounter{constant}
  \newcommand{\nc}[1]{\refstepcounter{constant}\label{#1}}
  \newcommand{\uc}[1]{c_{\textnormal{\tiny \ref{#1}}}}
\def\arraypar#1{\parbox[c]{\textwidth - 2cm}{\centering #1}}
\begin{document}

\title{Percolation and isoperimetry on roughly transitive graphs}

\author{Elisabetta Candellero\footnote{Email: \ E.Candellero@warwick.ac.uk; \ University of Warwick, Dept of Statistics, CV4 7AL Coventry, UK} \and Augusto Teixeira\footnote{Email: \ augusto@impa.br; \ IMPA, Estrada Dona Castorina 110, 22460-320, Rio de Janeiro, RJ, Brazil}}

\maketitle

\begin{abstract}
In this paper we study percolation on a roughly transitive graph $G$ with polynomial growth and isoperimetric dimension larger than one.
For these graphs we are able to prove that $p_c < 1$, or in other words, that there exists a percolation phase.
The main results of the article work for both dependent and independent percolation processes, since they are based on a quite robust renormalization technique.
When $G$ is transitive, the fact that $p_c < 1$ was already known before.
But even in that case our proof yields some new results and it is entirely probabilistic, not involving the use of Gromov's theorem on groups of polynomial growth.
We finish the paper giving some examples of dependent percolation for which our results apply.
\end{abstract}

\paragraph*{Keywords:} Percolation, isoperimetric inequalities, roughly transitive graphs, dependent percolation, decoupling inequalities.

\section{Introduction}

Since its introduction by Broadbent and Hammersley in \cite{PSP:2048852}, the model of independent percolation has received major attention from the physical and mathematical communities.
From the perspective of applications, it has the potential to model several different systems, from the flow of fluids in porous media, to the transmission of information on networks or diseases on populations.
On the theoretical side, this model has been source of challenging questions, and has given rise to beautiful theories.
For a mathematical background of the model on $\mathbb{Z}^d$, see \cite{Gri99} and \cite{bollobas2006percolation} and the references therein.

Besides the classical independent model on $\mathbb{Z}^d$, this study has been generalized by both considering the model on more general graphs, see for instance \cite{LP11}, \cite{BS96}, \cite{HJ06} and \cite{zbMATH05636419}, or by adding dependence to the percolation configuration, see \cite{zbMATH01018381}, \cite{zbMATH00846204}, \cite{Szn09} and \cite{TW10b} for some examples of such works.

In this article, we study vertex percolation on roughly transitive graphs, with or without dependence, showing the existence of a phase transition for the process as we vary the density of open vertices.
Another important contribution of this work is to help develop multi-scale renormalization on roughly transitive graphs of polynomial growth.
Renormalization is a powerful tool, which has been used to analyze several stochastic processes.
However this technique has limitations that often restrict its use to the lattice $\mathbb{Z}^d$.

\subsection{Graphs under consideration}

\nc{c:rough_trans}
In this paper we consider both dependent and independent percolation on roughly transitive graphs.
To define this concept precisely, we need to first introduce the notion of rough isometries.

Given graphs $G$, $G'$ and a constant $\uc{c:rough_trans} \geq 1$, a map $\phi:G \to G'$ is said to be a $\uc{c:rough_trans}$-rough isometry if for any~$x, y \in G$ we have
\begin{equation}
  \label{e:rough_iso}
  \frac{1}{\uc{c:rough_trans}} \; d\big( x, y \big) - 1 < d(\phi(x), \phi(y)) \leq \uc{c:rough_trans} \; d\big( x, y \big)
\end{equation}
and for any $y \in G'$, there exists some $x \in G$ such that
\begin{equation}
  \label{e:rough_surj}
  d(\phi(x), y) \leq \uc{c:rough_trans}.
\end{equation}
We say that a given graph $G$ is $\uc{c:rough_trans}$-roughly transitive if for any $x, y \in G$ there exists a $\uc{c:rough_trans}$-rough isometry $\phi$ satisfying $\phi(x) = y$.

\begin{Remark}
There are other (equivalent) definitions of rough isometry, see e.g., Definition~3.7 of \cite{W00}.
In this work, it is convenient to use \eqref{e:rough_iso} together with \eqref{e:rough_surj} as used for example in \cite{elek}.
\end{Remark}

\bigskip

In \cite{BS96}, Benjamini and Schramm suggested a connection between the existence of a phase transition for independent percolation on a given graph and its isoperimetric dimension.

\begin{Definition}
  We say that $ G=(V,E)$ satisfies the isoperimetric inequality $\mathcal{I}(c_i, d_i)$ if
  \begin{equation}\label{eq:isoperimetric}
    \textnormal{for any finite set }A\subseteq V, \textnormal{ we have }|\partial A|\geq c_i |A|^{\frac{d_i-1}{d_i}},
  \end{equation}
for some suitable constant $c_i>0$ and a real number $d_i>1$.
\end{Definition}

For example, it is not difficult to see that $\mathbb{Z}^d$ satisfies the $\mathcal{I}(c, d)$ for some $c > 0$, see Theorem 6.37 of \cite{LP11}, p. 210.
In \cite[Question~2]{BS96}, Benjamini and Schramm asked this:
\begin{Question}\label{question:BS96}
  Is it true that if $G$ satisfies $\mathcal{I}(c_i, d_i)$ for some $d_i > 1$ then $p_c(G) < 1$?
\end{Question}
See the precise definition of $p_c(G)$ in \eqref{e:p_c} below.
In this article we give a positive answer to the above question in the case of roughly transitive graphs of polynomial growth.

Isoperimetric conditions and independent percolation have been studied in various works.
In \cite{BS96}, the authors proved that $p_c(G) < 1$ when $G$  has \emph{infinite isoperimetric dimension} (meaning that \eqref{eq:isoperimetric} holds with $(d_i - 1)/d_i$ replaced by $1$).
In \cite{zbMATH05229215}, Kozma showed that for edge percolation $p_c(G) < 1$ when $G$ is a \emph{planar} graph with isoperimetric dimension strictly larger than one, polynomial growth and no accumulation points.
In \cite{2014arXiv1409.5923T}, a stronger version of \eqref{eq:isoperimetric} called \emph{local isoperimetric inequality} was shown to imply $p_c(G) < 1$ for graphs with polynomial growth.
Some arguments in this paper are very similar in spirit to those of \cite{2014arXiv1409.5923T}, the main novelty being that we can replace the stronger \emph{local isoperimetric inequality} of \cite{2014arXiv1409.5923T} by the classical \eqref{eq:isoperimetric} in the case of roughly transitive graphs.

In this paper we deal with graphs with polynomial growth, as specified in the following.
\begin{Definition}
Given constants $c_u, d_u > 0$, we say that $G$ satisfies  $\V(c_u, d_u)$ if for every $r \geq 1$ and $x \in V$
\begin{equation}
  \label{eq:volume_upper_bound}
  |B(x,r)| \leq c_u r^{d_u}.
\end{equation}
We then say that $G$ has \emph{polynomial growth} if there are constants $c_u, d_u>0$ such that $G$ satisfies $\V(c_u, d_u)$.
\end{Definition}

\subsection{Main result}

The first result we present here is the following.

\begin{Theorem}
  \label{thm:p_c_Bernoulli}
  If $G$ is a roughly transitive graph of polynomial growth satisfying \eqref{eq:isoperimetric} with $c_i>0$ and $d_i>1$, then $p_c(G) < 1$.
  This gives a positive answer to Question~\ref{question:BS96} in this special case.
\end{Theorem}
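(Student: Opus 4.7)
The plan is to prove the theorem by a multi-scale renormalization argument, showing that for $p$ sufficiently close to $1$ a fixed reference vertex $x_0$ belongs to an infinite open cluster with positive probability. I would fix a geometric sequence of scales $L_k = L_0 \ell^k$ for a large integer $\ell$ and, using rough transitivity together with the polynomial growth bound \eqref{eq:volume_upper_bound}, choose for each $k$ a net $\{x^{(k)}_i\}_i \subset V$ at mutual distance of order $L_k$, so that the balls $B(x^{(k)}_i, L_k)$ form a locally finite cover of $G$ with bounded overlap multiplicity. For each scale $k$ and each net point $y$ I would define a good event $\mathcal{G}_k(y)$ roughly requiring that inside $B(y, L_k)$ there is a sufficiently large and spread-out open cluster, which matches up with the analogous clusters in each neighbouring scale-$k$ ball. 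The upshot of this set-up is that if $\mathcal{G}_k$ holds at all scales in a neighbourhood of $x_0$, then $x_0$ lies in an infinite open cluster.

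The heart of the argument is a cascading inequality of the form
\begin{equation*}
  q_{k+1} \;\leq\; C\, \ell^{A d_u} \, (D\, q_k)^{c\, \ell^{d_i - 1}}, \qquad q_k := \sup_{y} P\big[\mathcal{G}_k(y)^{c}\big].
\end{equation*}
To establish it, one observes that $\mathcal{G}_{k+1}(y)$ can fail only if there exists inside $B(y, L_{k+1})$ a connected cluster $\mathcal{A}$ of scale-$k$ net points on which $\mathcal{G}_k$ fails, and which separates the centre from the boundary of $B(y, L_{k+1})$ at the coarse-grained level. After transferring the isoperimetric inequality \eqref{eq:isoperimetric} to the coarse lattice via the rough isometries guaranteed by rough transitivity, any such separating $\mathcal{A}$ must have size $\gtrsim \ell^{d_i - 1}$. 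Polynomial growth \eqref{eq:volume_upper_bound} and bounded degree at the coarse level then give an enumeration bound on connected $\mathcal{A}$ of the prescribed size; to turn this into a probability bound I would pass to a sufficiently separated sub-collection of $\mathcal{A}$ on which the scale-$k$ bad events are independent (or nearly so, in the dependent setting) and then take a union bound.

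Since $d_i > 1$, the exponent $c\, \ell^{d_i - 1}$ grows unboundedly with $\ell$, so by choosing $\ell$ large and then $q_0 = 1 - p$ small (i.e.\ $p$ close enough to $1$), the recursion iterates to yield $q_k \to 0$ super-exponentially. Chaining the good events $\mathcal{G}_k$ across scales around $x_0$ then produces an infinite open cluster through $x_0$ with positive probability, whence $p_c(G) < 1$. The main obstacle is the cascading step, and in particular the faithful transfer of \eqref{eq:isoperimetric} to the coarse-grained lattice: in a purely transitive setting this is routine because the renormalized lattice inherits a clean quasi-isometry with $G$, but under rough transitivity the nets at each scale are only defined up to bounded-multiplicity choices and one must ensure that the constants $c_i, c_u$ do not degrade when iterating the renormalization. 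A secondary subtlety, especially relevant for the dependent percolation extension announced in the abstract, is engineering enough independence among scale-$k$ bad events to justify the union bound; this is typically handled by defining $\mathcal{G}_k$ through slightly enlarged balls and exploiting any finite-range dependence in the underlying process.
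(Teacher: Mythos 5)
Your proposal correctly identifies the overall shape of the argument (multi-scale renormalization, use the isoperimetric inequality to force a large separating structure, polynomial growth to control the enumeration), and you also correctly put your finger on the hard part: transferring \eqref{eq:isoperimetric} to the renormalized picture. But there is a genuine gap, and it lies precisely there; the step you flag as ``the main obstacle'' is not merely delicate, it is the entire content of the theorem, and the route you sketch for it does not work on a general graph.

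Concretely, your cascading step rests on the assertion that if $\mathcal{G}_{k+1}(y)$ fails then there is a \emph{connected} set $\mathcal{A}$ of scale-$k$ net points, all bad, which separates the centre of $B(y,L_{k+1})$ from its boundary. This is a Peierls/duality statement. On $\mathbb{Z}^d$ it holds because minimal cutsets are $*$-connected (or, in $d=2$, by planar duality), but on a general roughly transitive graph no analogue is available: the set of bad scale-$k$ boxes that blocks the centre from the boundary has no reason to be connected in any coarse graph, and without connectedness you cannot enumerate the candidate $\mathcal{A}$'s by $c^{|\mathcal{A}|}$ and the union bound collapses. In fact, the difficulty of controlling cutsets on transitive graphs is exactly what forced earlier proofs (e.g.\ via finite presentability) to invoke Gromov's theorem, which this paper deliberately avoids. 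A second, related gap is the phrase ``after transferring the isoperimetric inequality \eqref{eq:isoperimetric} to the coarse lattice via the rough isometries.'' Rough transitivity gives you rough isometries of $G$ to itself, not a quasi-isometry between $G$ and a coarse-grained auxiliary graph of boxes, and the constants in any naively iterated coarse-graining degrade with the scale. You note this as a subtlety but do not resolve it, and resolving it is the theorem.

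The paper's proof circumvents both problems by working with a different bad event and a completely different mechanism for extracting many small-scale occurrences from one large-scale occurrence. The bad event $\mathcal{S}(x,L)$ in \eqref{e:separation_event} is the existence of two large, disjoint, connected sets $A^0,A^1\subseteq B(x,3L)$ not joined by an open path inside $B(x,3L^2)$. There is no ``separating surface of bad boxes'' anywhere in the argument. Instead, the \nameref{lemma:joao} shows the events cascade in three steps: (i) every path from $A^0$ to $A^1$ must pass through a small-scale separation event (Lemma~\ref{lemma:new_lemma_3.2}); (ii) Max-Flow--Min-Cut combined with \eqref{eq:isoperimetric} (applied directly in $G$, not in a coarse lattice) produces $\gtrsim L_{k+1}^{\gamma(d_i-1)/d_i}$ edge-disjoint paths from $A^0$ out to $\partial_i B(x,2L_{k+1}^2)$, enough to survive deleting any prescribed small family of balls (Lemma~\ref{lemma:N''}); and (iii) if one could \emph{not} find a path from $A^0$ to $A^1$ avoiding a small family of balls, then rough transitivity lets one replicate the resulting bottleneck recursively and embed an exponentially large portion of a binary tree into a ball of $G$ (Lemmas~\ref{lemma:induction} and \ref{lemma:loops}), contradicting polynomial growth. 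This tree-embedding step is where rough transitivity is used in an essential, non-routine way; your proposal has nothing playing this role. Without it, the classical isoperimetric hypothesis \eqref{eq:isoperimetric} is simply too weak, which is precisely why the earlier paper \cite{2014arXiv1409.5923T} had to assume a stronger \emph{local} isoperimetric inequality.

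One further, lesser point: the paper uses the rapidly growing scales $L_{k+1}=L_k^\gamma$ rather than geometric $L_0\ell^k$. For Bernoulli percolation this is not the crux, but it is what makes the bootstrap inequality (Lemma~\ref{l:cascade_decays}) close under the polynomial decoupling $\mathcal{D}(\alpha,c_\alpha)$ needed for the dependent case, and Theorem~\ref{thm:p_c_Bernoulli} is deduced in the paper as a corollary of that dependent result.
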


\begin{Remark}
Let us note that whenever $p_c(G) < 1$ and $G$ has bounded degree, then the graph $G$ also undergoes a non-trivial phase transition for the Ising model, the Widom-Rowlinson model and the beach model. This follows from Theorems~1.1 and 1.2 of \cite{MR1765172}.
Moreover, it is a consequence of \cite[Remark 6.2]{lyons-schramm}, that if on a bounded-degree graph one has $p_c(V(G))<1$ for Bernoulli percolation on the \emph{vertices}, then one also has $p_c(E(G))<1$ for Bernoulli percolation on the edges.
\end{Remark}

The above result is a consequence of our Theorem~\ref{thm:p_c<1_dependent} below, which applies to both dependent and independent percolation processes.
Roughly speaking, Theorem~\ref{thm:p_c<1_dependent} states that, if the dependencies decay fast enough with the distance, then the percolation undergoes a non-trivial phase transition.
To be more precise, we need to define what we mean by ``decay of dependence''.

Let $\P$ denote any probability measure on the state space $\Omega:=\{0,1\}^V$, endowed with the $\sigma$-algebra generated by the canonical projections $Y_x:\Omega \to \{0,1\}$, defined by $Y_x(\omega):=\omega(x)$, for $x \in V$.

Whenever we say that the marginals of $\{Y_x\}$ are ``\emph{large} (resp.\ small) enough'', we mean that we require a large enough lower bound (resp.\ small enough upper bound) which is uniform over all $x\in V$.
Note that this can depend on the parameters that appear in the context, but not on the measure $\P$ itself.
\begin{Definition}\label{def:decoupling}
We shall say that $\P$ satisfies the \emph{decoupling inequality} $ \mathcal{D}(\alpha,c_\alpha)$ (where $\alpha>0$ is a fixed parameter) if for any $x \in V$, $r\geq 1$ and two events $ \mathcal{G}$ and $\mathcal{G}'$ such that
\[
\mathcal{G}\in \sigma\bigl ( Y_z, z\in B(x,r)\bigr ) \qquad \text{and}\qquad \mathcal{G}'\in \sigma\bigl ( Y_w,\,  d(w,x)\geq 2r\bigr ),
\]
we have
\[
\P(\mathcal{G} \cap \mathcal{G}')\leq \bigl (\P (\mathcal{G}) + c_\alpha r^{-\alpha}\bigr ) \P(\mathcal{G}').
\]
For convenience, we always assume that $B(x,r) $ is the set of elements lying at distance smaller than or equal to $r$ from $x$.
\end{Definition}

We are now in position to state the following.

\begin{Theorem}\label{thm:p_c<1_dependent}
Let $G$ be a $\uc{c:rough_trans}$-roughly transitive graph satisfying $\mathcal{V}(c_u, d_u)$ and $\mathcal{I}(c_i, d_i)$, with $d_i > 1$ and assume the law $\P$ satisfies $\mathcal{D}(\alpha, c_\alpha)$ with $\alpha > \alpha_{\ast}$ (see Remark~\ref{remark:alpha_star} for the definition of $\alpha_{\ast}$).
Then there exists a $p_\ast < 1$, depending only on $\alpha, c_\alpha,\uc{c:rough_trans}, c_i, d_i, c_u$ and $d_u$, such that if $\inf_{x \in V} \mathbb{P}[Y_x = 1] > p_*$, then $G$ contains almost surely a unique infinite open cluster.
Moreover, fixed any value $\theta > 0$, if the marginal distributions of $\{Y_x\}$ are large enough, then for every site $z\in V$
\begin{equation}
  \label{e:decay_second}
\lim_{v \to \infty}v^\theta \P[v < |\mathcal{C}_z| < \infty] = 0,
\end{equation}
where $\mathcal{C}_z$ denotes the open connected component containing $z$.
\end{Theorem}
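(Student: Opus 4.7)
The plan is a multi-scale renormalization argument in which the isoperimetric inequality constrains the shape of any finite cluster and the decoupling inequality plays the role of approximate independence between far-apart regions. Fix a sequence of scales $L_{k+1} = \ell\, L_k$ with $\ell$ a large integer. For each $k$, extract a maximal $L_k$-separated net $\mathcal{N}_k \subset V$; by \eqref{e:rough_surj} and $\mathcal{V}(c_u, d_u)$, the covering $\{B(x, L_k)\}_{x \in \mathcal{N}_k}$ has uniformly bounded multiplicity, and the induced incidence graph on $\mathcal{N}_k$ plays the role of a renormalized lattice. Declare a block $B(x, L_k)$ to be \emph{good} if every pair of vertices in $B(x, L_k)$ is joined by an open path inside $B(x, 2L_k)$, and write $p_k := \sup_x \mathbb{P}[B(x, L_k)\text{ is bad}]$. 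Since the event depends only on $Y_z$ with $z \in B(x, 2L_k)$, the decoupling $\mathcal{D}(\alpha, c_\alpha)$ applies to pairs of bad blocks at distance $\gtrsim L_{k+1}$.

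Following the cascading strategy of \cite{2014arXiv1409.5923T}, the next step is a recursion. A bad $(k{+}1)$-block forces the existence of two disjoint bad $k$-sub-blocks at distance $\asymp L_{k+1}$; counting such pairs (at most $C L_{k+1}^{2 d_u}$ by polynomial growth) and applying $\mathcal{D}(\alpha, c_\alpha)$ to each yields an inequality of the form
\[
p_{k+1} \le C\, L_{k+1}^{2 d_u}\bigl(p_k^2 + c_\alpha L_{k+1}^{-\alpha}\bigr).
\]
For $\alpha$ above an explicit threshold $\alpha_\ast$ depending on $d_u$ and the growth rate of $L_k$, and provided $p_0$ is small enough (which is guaranteed by $\inf_x \mathbb{P}[Y_x = 1]$ being close to $1$), this recursion closes and gives $p_k \to 0$ faster than any polynomial in $L_k$.

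The second main step is an isoperimetric Peierls argument. If $v \le |\mathcal{C}_o| < \infty$, then $|\partial \mathcal{C}_o| \ge c_i |\mathcal{C}_o|^{(d_i-1)/d_i}$, and choosing $k = k(v)$ with $L_k^{d_i} \asymp v$ forces $\partial \mathcal{C}_o$ to meet at least $N(v) \gtrsim v^{(d_i-1)/d_i}/L_k^{d_u - 1}$ pairwise disjoint $k$-blocks, each of which must be bad (the boundary obstructs the good crossing property). Iterating $\mathcal{D}(\alpha, c_\alpha)$ across these disjoint bad-block events gives a joint probability of order $(p_k + c_\alpha L_k^{-\alpha})^{N(v)}$, while the number of possible bad-block cutsets of size $N(v)$ surrounding the origin is only polynomial in $v$ by polynomial growth. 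Combining these with the super-polynomial smallness of $p_k$ from the previous paragraph yields \eqref{e:decay_second} for every $\theta > 0$, and in particular $\mathbb{P}[|\mathcal{C}_o| = \infty] > 0$, giving existence of an infinite cluster. Uniqueness follows from a Burton--Keane-type argument whose finite-energy substitute in the dependent setting is furnished by the decoupling combined with rough transitivity.

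The main obstacle is the joint calibration of the exponents $d_u$, $d_i$ and $\alpha$ so that both the cascade and the Peierls sum close simultaneously. The per-scale decoupling cost $c_\alpha L_{k+1}^{-\alpha}$ must be dominated by the quadratic gain $p_k^2$ in the cascade, and the Peierls sum must converge despite the cutset entropy being only polynomial on a roughly transitive graph---rather than exponential as on $\mathbb{Z}^d$---which is precisely what forces $p_k$ to decay faster than any polynomial in $L_k$ and pins down $\alpha_\ast$ (see Remark~\ref{remark:alpha_star}). A secondary, more technical difficulty is transplanting constructions that are natural on a lattice (cubes, dual surfaces, cutsets) to a merely roughly transitive graph using only \eqref{e:rough_iso}, \eqref{e:rough_surj} and $\mathcal{V}(c_u, d_u)$; each geometric step---bounded-multiplicity coverings, block-level crossings, and the identification of bad-block cutsets around a finite cluster---must be redone in this generality.
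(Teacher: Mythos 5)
Your overall skeleton (multi-scale renormalization, a decaying block-probability sequence $p_k$, decoupling to substitute for independence, and a final geometric argument to build the infinite cluster) does match the paper's strategy at a high level, but there are several gaps, two of which are serious.

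First, your definition of a \emph{good} block — ``every pair of vertices in $B(x, L_k)$ is joined by an open path inside $B(x, 2L_k)$'' — is too strong to be useful. If even one vertex of $B(x, L_k)$ is closed, that vertex cannot be joined to anything, so goodness forces \emph{all} of $B(x, L_k)$ to be open. Hence $\mathbb{P}[\text{good}] \leq p^{|B(x, L_k)|} \to 0$ and $p_k \to 1$ regardless of $p$, and the recursion you want to set up can never close. The paper's separation event $\mathcal{S}(x, L)$ is carefully calibrated to avoid exactly this: it only triggers when two sets of diameter at least $L/100$ cannot be connected, so that small closed islands (which are unavoidable at any $p < 1$) do not make a block bad. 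Any workable block definition must have this ``only large defects matter'' feature.

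Second, and more fundamentally, you assert as a given that ``a bad $(k{+}1)$-block forces the existence of two disjoint bad $k$-sub-blocks at distance $\asymp L_{k+1}$'' and then write down a quadratic recursion. This cascading property is precisely the hard part of the whole theorem, and it is the paper's main novelty. On $\mathbb{Z}^d$ one can produce it by duality or by sweeping a cut surface, but on a roughly transitive graph there is a priori no reason why the ``surface'' separating two large sets should be spread across many far-apart sub-blocks rather than concentrated in a thin bottleneck. The paper's proof of the Cascading Lemma runs through three substantial steps: reducing the question to finding paths between $A^0$ and $A^1$ avoiding prescribed balls, then using Max-Flow-Min-Cut with the isoperimetric inequality to find many disjoint paths, and finally — if a bottleneck persists — using rough transitivity to replicate it and embed a growing portion of a binary tree into $G$, contradicting polynomial growth (Lemmas~\ref{lemma:new_lemma_3.2}, \ref{lemma:N''}, \ref{lemma:induction}, \ref{lemma:loops}). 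None of this is visible in your sketch, so the recursion you write is not established. Moreover the paper needs a cascading lemma producing $J$ bad sub-blocks for an arbitrary $J$ (not just $J=2$) in order to close the recursion for the large exponent $\beta'$ required for the tail bound.

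Two secondary points. Your Peierls-cutset derivation of \eqref{e:decay_second} claims that a block meeting $\partial\mathcal{C}_o$ must be bad and that the cutset entropy is polynomial; the first is not automatic (it depends on the block event and on both sides of the boundary being large), and in any case the paper proceeds very differently in Lemma~\ref{lemma:lego}, by propagating non-occurrence of separation events along a fixed infinite geodesic. Finally, invoking ``a Burton--Keane-type argument'' for uniqueness is hand-waving here: that argument relies on translation invariance and finite energy, which the law $\mathbb{P}$ need not satisfy. The paper explicitly advertises (Remark~\ref{r:advantages}(c)) that its uniqueness proof avoids translation invariance, by instead deducing uniqueness directly from the non-occurrence of separation events at all large scales (Claim~\ref{claim:(4.13)}).
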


We also prove a theorem establishing the existence of a non-trivial sub-critical phase.
This result is simpler to prove but helps to establish a more complete picture of phase transition for dependent percolation on $G$.

\begin{Theorem}\label{thm:p_c>0_dependent}
Let $G$ be a graph satisfying $\mathcal{V}(c_u, d_u)$.
Moreover, let $\P$ be a probability measure that satisfies $\mathcal{D}(\alpha, c_\alpha)$ with $\alpha > \alpha_{\ast \ast}$, where $\alpha_{\ast \ast} > 0$ is defined in Remark~\ref{remark:alpha_double_star}.
Then there exists a $p_{\ast\ast} > 0$, depending only on $G, \alpha, c_\alpha$, such that if $\sup_{x \in V} \mathbb{P}[Y_x = 1] < p_{\ast\ast}$, then the graph contains almost surely no infinite open cluster.
Moreover, fixed $\theta > 0$, if the marginal distributions of $\{Y_x\}$ are small enough, then
\[
\lim_{r \to \infty} r^\theta \P[|\mathcal{C}_z| > r] = 0,
\]
where $\mathcal{C}_z$ denotes the open connected component containing a fixed site $z \in V$.
\end{Theorem}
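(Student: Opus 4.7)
The plan is a multi-scale renormalization argument producing super-polynomial decay of the probability that a given vertex is connected by an open path to a distant one. Fix an integer $\ell \geq 5$ and a base scale $L_0$; put $L_k := L_0 \ell^k$. For each $x$ let $A_k(x)$ denote the event that there is an open path from $x$ to $\partial B(x,L_k)$ contained in $B(x,L_k)$, which is measurable with respect to $\sigma(Y_w : w \in B(x,L_k))$, and set $p_k := \sup_{x \in V} \P[A_k(x)]$. Observe that $\diam(\mathcal{C}_x) > r$ forces some vertex of $\mathcal{C}_x$ to sit at distance larger than $r/2$ from $x$ and hence $A_k(x)$ to hold for every $k$ with $L_k \leq r/2$; so the theorem reduces to proving that $p_k$ decays faster than any polynomial in $L_k$ when the marginals are sufficiently small.

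The core step is the recursive inequality
\[
p_{k+1} \;\leq\; C\, L_k^{d_u}\,\bigl(p_k + c_\alpha L_k^{-\alpha}\bigr)\, p_k,
\]
with $C = C(c_u, d_u, \ell)$. To prove it, take an open path $\pi$ realizing $A_{k+1}(x)$ and let $z$ be its first vertex past distance $3L_k$ from $x$, so that $d(x,z) = 3L_k + 1$. The initial segment of $\pi$ truncated at its first exit of $B(x,L_k)$ realizes $A_k(x)$; and for $\ell \geq 5$ the endpoint of $\pi$ lies at distance at least $L_{k+1} - 3L_k - 1 \geq L_k$ from $z$, so the tail of $\pi$ starting at $z$ and truncated at its first exit of $B(z,L_k)$ realizes $A_k(z)$. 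Because $d(x,z) \geq 3L_k + 1$ we have $B(z,L_k) \subset V \setminus B(x,2L_k)$, so the decoupling hypothesis $\mathcal{D}(\alpha,c_\alpha)$ gives $\P[A_k(x) \cap A_k(z)] \leq (p_k + c_\alpha L_k^{-\alpha})\,p_k$. Union-bounding over the at most $c_u(3L_k+1)^{d_u}$ candidates $z \in \partial B(x,3L_k+1)$ and taking a supremum in $x$ yields the recursion.

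The next step is to iterate. The plan is to track exponents $\gamma_k$ with $p_k \leq L_k^{-\gamma_k}$. When $p_k$ dominates the correction $c_\alpha L_k^{-\alpha}$ (the ``geometric'' regime) the recursion reads $p_{k+1} \lesssim L_k^{d_u} p_k^2$, which improves $\gamma_k$; once $p_k$ falls below $c_\alpha L_k^{-\alpha}$ (the ``decoupling'' regime) the recursion becomes $p_{k+1} \lesssim L_k^{d_u-\alpha}p_k$ and $\gamma_k$ increases by at least $\alpha - d_u$ per step. As long as $\alpha > d_u$ (so one may take $\alpha_{\ast\ast} = d_u$ up to constants), the two regimes interlock to drive $\gamma_k \to \infty$. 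The induction is seeded by choosing $L_0$ large and $p_{\ast\ast}$ small in terms of $c_u, d_u$ alone, using $p_0 \leq \sup_x \P[Y_x = 1] \leq p_{\ast\ast}$ to secure $\gamma_0 > d_u$. Consequently $p_k \to 0$, ruling out any infinite cluster; and shrinking the marginals further to boost $\gamma_0$ arbitrarily yields the stretched polynomial decay for any prescribed $\theta$, since $\P[\diam(\mathcal{C}_x) > r] \leq p_k$ for $L_k \leq r/2$. The main technical difficulty is checking that these two regimes really interlock and that the polynomial prefactor $L_k^{d_u}$ at each step does not overwhelm the gain from squaring $p_k$; this is handled by keeping $L_0$ large relative to $C$, $c_\alpha$ and the target exponent.
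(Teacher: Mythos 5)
Your proposal is correct, but it follows a genuinely different route from the paper's, and it is instructive to compare the two.

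The paper proves this theorem as a byproduct of the same machinery it later applies to the harder Theorem~\ref{thm:p_c<1_dependent}. It works with the crossing events
$\mathcal{T}(x,L)$ (an open path from $B(x,3L)$ to $\partial B(x,3L^2)$), with the doubly-growing scales $L_{k+1}=L_k^\gamma$, and shows that these events are \emph{cascading} in the sense of Definition~\ref{d:joao}: if $\mathcal{T}(x,L_{k+1})$ occurs, then $\mathcal{T}(y_j,L_k)$ occurs at $J$ well-separated points taken from a paving set $K$ supplied by Proposition~\ref{lemma:paving}. The recursive inequality then reads
$p_{k+1}\leq\bigl(\uc{c:paving}L_k^{2\gamma d_u-d_l}\bigr)^{J'}\bigl(p_k+c_\alpha L_k^{-\alpha}\bigr)^{J'}$
(Lemma~\ref{l:cascade_decays}), and the interval $\bigl(2\gamma d_u-d_l,\infty\bigr)$ of admissible $\alpha$ comes from balancing the combinatorial prefactor $|K|^{J'}$ against the decoupling gain; with $\gamma=2$ this gives $\alpha_{**}=4d_u-d_l$. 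You instead use one-arm events from a single vertex, geometric scales $L_k=L_0\ell^k$, and a two-point ``squaring'' recursion obtained by breaking the path at its first exit of $B(x,3L_k)$; union-bounding over the exit vertex $z$ (at most $c_u(3L_k+1)^{d_u}$ candidates) and decoupling the two $L_k$-scale arm events at $x$ and $z$ gives $p_{k+1}\leq C L_k^{d_u}(p_k+c_\alpha L_k^{-\alpha})p_k$.

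Both arguments are sound, and yours is strictly simpler and quantitatively sharper for this particular theorem: the two-point split trades a combinatorial prefactor $L_k^{d_u}$ for a factor $p_k$, so the admissibility threshold is $\alpha>d_u$ rather than $\alpha>4d_u-d_l$. It also gives a cleaner ``two-phase'' iteration than you give it credit for: while $p_k\geq c_\alpha L_k^{-\alpha}$ the recursion is $p_{k+1}\leq 2CL_k^{d_u}p_k^2$, and writing $w_k:=2C\ell^{d_u}L_k^{d_u}p_k$ one literally gets $w_{k+1}\leq w_k^2$, so the induction is seeded just by $w_0<1$, which is guaranteed once $\sup_x\P[Y_x=1]$ is below a threshold in $c_u,d_u,\ell,L_0$ alone (note $\P[A_0(x)]\leq\P[Y_x=1]$). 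This drives $p_k$ below $c_\alpha L_k^{-\alpha}$ after finitely many steps, after which $p_{k+1}\leq 2Cc_\alpha L_k^{d_u-\alpha}p_k$ is eventually a contraction, giving decay of $p_k$ faster than any power of $L_k$. So in fact your threshold $p_{**}$ works simultaneously for all $\theta$ — slightly stronger than the paper's statement, where the threshold for the rate is allowed to depend on $\theta$. There is no real need for the ``interlocking regimes'' caveat you flag at the end.

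What the paper's more elaborate argument buys is uniformity of method: the abstract cascading framework (Definition~\ref{d:joao}, Lemma~\ref{l:cascade_decays}) is exactly what is reused for the separation events $\S(x,L)$ in Theorem~\ref{thm:p_c<1_dependent}, where the analogue of your two-point split is the entire content of the \nameref{lemma:joao} and is far from trivial. So the paper sacrifices sharpness in $\alpha_{**}$ and simplicity here in order to set up tools that carry to the main theorem, whereas your proof is self-contained, elementary, and better adapted to the subcritical half of the picture.

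One small hygiene point: in the application of $\mathcal{D}(\alpha,c_\alpha)$ to $A_k(x)$ and $A_k(z)$, the additive error $c_\alpha r^{-\alpha}$ attaches to the event measurable in $B(x,r)$, i.e.\ to $A_k(x)$; you get $\P[A_k(x)\cap A_k(z)]\leq(\P[A_k(x)]+c_\alpha L_k^{-\alpha})\P[A_k(z)]$, which is indeed $\leq(p_k+c_\alpha L_k^{-\alpha})p_k$ as you wrote, but the roles are worth being explicit about since the definition is asymmetric.
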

\begin{Remark}
  \begin{enumerate}[\quad a)]
  \item Note that Theorem~\ref{thm:p_c>0_dependent} does not require $G$ to be roughly transitive.
  \item Moreover, this theorem does not follow from a simple path counting argument because of the dependence present in the law $\mathbb{P}$.
  \item Given the above results, a natural question would be whether the condition $\mathcal{D}(\alpha, c_\alpha)$ on the decay of dependence of $\mathbb{P}$ could be weakened.
    Of course, the parameters $\alpha_{\ast}$ and $\alpha_{\ast \ast}$ that appear above are not supposed to be sharp. However, let us observe that if the exponent $\alpha$ appearing in the decay of dependence of the law $\mathbb{P}$ is slow enough, then there are counterexamples showing that Theorem~\ref{thm:p_c<1_dependent} does not hold, see Subsection~\ref{ss:elipses}.
  \end{enumerate}
\end{Remark}

\subsection{Transitive graphs}

We can specialize our main results to the special case of transitive graphs of polynomial growth.
It is important to observe that the hypothesis \eqref{eq:isoperimetric} is not necessary in this case, since this can be deduced for instance from \cite{CPC:1771424} (cf.\ Appendix~\ref{s:appendix}).
This yields to another consequence of our main result.
\begin{Corollary}\label{thm:p_c<1}
Let $G=(V,E)$ be a transitive graph satisfying $c' r^{d'} \leq |B(o, r)| \leq c'' r^{d''}$ for every $r \geq 1$ and $o\in G$, for some $c', c'' > 0$ and $1 < d' \leq d'' < \infty$. Then $p_c(G) < 1$.
\end{Corollary}
Although the above result was already known, as we discuss in detail in the next subsection, it is worth mentioning that our proof does not make use of Gromov's theorem on groups of polynomial growth, relying instead on probabilistic tools only.

We postpone the proof of the above corollary to Appendix~\ref{s:appendix}.

\paragraph{Previously known results}

Percolation on transitive graphs has been intensively studied in the last decades specially for the independent case.
Let us now mention some of the works that more closely relate to the current article.

In \cite{lyons_1995}, Russell Lyons proved that for independent percolation, $p_c(G) < 1$ if $G$ is a group of exponential growth (see also \cite[Chapter 7]{LP11}).
The case of Cayley graphs of finitely presented groups with one end has been dealt with in \cite{zbMATH01224777} also in the independent case.
A similar question has also been considered on the Grigorchuk group, an example of a group with intermediate growth (see \cite{percolation_Grigorchuk}).
In Corollary~3.2 of \cite{zbMATH06138623} it has been proved that $p_c(G) < 1$ for transitive graphs $G$ satisfying another isoperimetric inequality, see (2.4) and Definition~2.3 of \cite{zbMATH06138623}.
If $G$ is a transitive amenable graph, it was proved in \cite{BK89} that if for some $p$ there exists an infinite open cluster, then it is almost surely unique, see also Theorem~2.4 of \cite{HJ06}.

We also point out that the results proven in \cite{zbMATH06138623} hold in a more general setting than what we describe here.
More precisely, they hold in the case of \emph{quasi-transitive} graphs, but since we do not make use of such graphs in the rest of the paper, we refer the interested reader to \cite{zbMATH06138623} for the details.

A recent work on this topic is \cite{raoufi-yadin}, where the authors look at the percolation threshold for certain groups, which include the so-called \emph{indicable groups}.
(We refer to their paper for the definitions and the precise statements.)
Here we emphasize that indicable groups include groups of polynomial growth.
However, the methods developed in \cite{raoufi-yadin} allow one to work in further generality, for example, with groups of intermediate growth such as the Grigorchuk group (cf.\ \cite[Section 1.3]{raoufi-yadin}).

\vspace{4mm}

The most important relation between previously known results and our work comes at the intersection with Corollary~\ref{thm:p_c<1}, since transitive graphs can be associated with a group of automorphisms, benefiting therefore from important results on group theory.

More precisely, if $G$ is a transitive graph of polynomial growth, then $G$ is quasi-isometric to a Cayley graph of a nilpotent group (see \cite{trofimov}, \cite{losert}, Theorem~4 of \cite{Sabidussi64} or \cite[Theorem 2]{note_authomorphisms}).
This yields two different proofs of Corollary~\ref{thm:p_c<1}.
Let $G$ be the Cayley graph of a nilpotent group with super-linear growth.
Then
\begin{enumerate} [\quad a)]
\item We can use Theorem~7.19 of \cite{LP11} to conclude that there exists a subset of $G$ which is quasi-isometric to $\mathbb{Z}^2$, therefore $p_c(G) < 1$ as desired.
This argument has the advantage that it allows for duality arguments that can work even for dependent percolation.
\item Alternatively, we observe that $G$ is finitely presented (see Exercise~4.3 of \cite{pete_book}) and use Theorem~9 of \cite{zbMATH01224777} to conclude that the number of cut-sets of size $n$ separating a fixed vertex from infinity is at most $c^n$.
Then a simple Peierls-type argument can show that $p_c < 1$.
The added benefit of this approach is that it gives an exponential bound on the probability \eqref{e:decay_second} for Bernoulli percolation on transitive graphs.
\end{enumerate}

\begin{Remark}
  \label{r:advantages}
  In light of the above, let us emphasize some advantages of our approach.
  \begin{enumerate} [\quad a)]
  \item For the case of transitive graphs, our proof does not make use of Gromov's Theorem on groups of polynomial growth.
Although the proof of his original result has been considerably simplified by other authors (cf.\ e.g.\ \cite{kleiner} and \cite{shalom-tao}), Gromov's theorem is quite involved and apparently far from the field of probability.
  \item To the best of our knowledge, the bound in \eqref{e:decay_second} does not seem to follow from the above arguments in the case of dependent percolation on transitive graphs.
  \item Uniqueness of the infinite cluster obtained in Theorem~\ref{thm:p_c<1_dependent} does not depend on the translation invariance of the law $\mathbb{P}$ as is the case with the argument in \cite{BK89}.
  \item Note that being roughly isometric to each other defines an equivalence relation over the class of graphs.
    However, it is important to notice that the distortion constant $\uc{c:rough_trans}$ worsens as we compose rough isometries.
    Therefore, for a given roughly transitive graph there is a priori no analogue of the group of isomorphisms that is fundamental in the case of transitive graphs.
  \item We strongly believe that the techniques we develop here could be easily extended in order to work for weaker notions of transitivity, for example by weakening the notion of rough isometries.
    We however kept the current presentation in order to avoid an overly complicated exposition.
  \end{enumerate}
\end{Remark}


\subsection{Idea of the proofs}

The proofs of Theorem~\ref{thm:p_c<1_dependent} and Corollary~\ref{thm:p_c<1} follow a renormalization scheme which allows us to bound the probability of certain ``bad events'' as the scale size grows.
In this section we will focus on the case of Theorem~\ref{thm:p_c<1_dependent} which is the more elaborate one.

For any $x\in V(G)$ and $L > 0$ set
\begin{equation*}
  \S(x, L) =
  \begin{array}{c}
    \text{``there exist two large connected sets in $B(x, 3L)$,}\\
    \text{which cannot be joined by an open path in $B(x, 3L^2)$''}.
  \end{array}
\end{equation*}
This will play the role of the ``bad event'' in the proof of Theorem~\ref{thm:p_c<1_dependent}, see \eqref{e:separation_event} for a precise definition.

The main advantage of the above event is that it plays two complementary roles.
First, the events $\S(x,L)$ are hierarchical (see the \nameref{lemma:joao}~\ref{lemma:joao}), therefore it is possible to bound their probabilities using inductive arguments coming from a multi-scale renormalization procedure.
Secondly, these events are rich enough that, once we show that $\mathbb{P}[\S(x, L)]$ decays fast as $L$ goes to infinity, we can derive the existence of a unique open infinite connected component, as desired (see Lemma~\ref{lemma:lego}).

For the inductive part of the argument, we need to introduce a rapidly growing sequence $(L_k)_{k \geq 1}$ of scales, see \eqref{eq:inductive_def_L_k}.
As we mentioned above, our objective is to show that for large enough values of the percolation parameter $p$, the probabilies $p_k = \mathbb{P}[\S(o, L_k)]$ of observing a separation event at scale $k$ go to zero fast as $k$ goes to infinity.

\vspace{4mm}

The proof of our main results can then be described through three steps:
\begin{enumerate}[\quad a)]
\item We first show that $\S(o, L_{k+1})$ implies the occurrence of $\S(y_i, L_k)$ for several points $y_i \in B(o, 2L_{k+1}^2)$, see the \nameref{lemma:joao}~\ref{lemma:joao}.
Note that the event $\S(y_i, L_k)$ takes place in the smaller scale $L_k$.
\item Derive from the above a recursive inequality between $p_{k+1}$ and $p_k$, to show that if $p$ is close enough to $1$, then $p_k$ goes to zero fast as $k$ goes to infinity, see Section~\ref{s:reduction}.
\item Finally, in Lemma~\ref{lemma:lego} we show that a fast decay of $p_k$ implies our main result.
\end{enumerate}

Although all of the above steps are essential in establishing  Theorem~\ref{thm:p_c<1_dependent} and Corollary~\ref{thm:p_c<1}, we note that items $b)$ and $c)$ follow the same spirit to what has been done in \cite{2014arXiv1409.5923T}.
For the sake of completeness we also include their proofs in the current paper.
However it is step $a)$ that contains the main novelty of the current work, see the \nameref{lemma:joao}~\ref{lemma:joao}.
It is this lemma that allows us to weaken the \emph{local isoperimetric inequality} of \cite{2014arXiv1409.5923T} to the canonical definition \eqref{eq:isoperimetric} for roughly transitive graphs of polynomial growth.

\subsection{Sketch of the proof of the \nameref{lemma:joao}}

The main new ingredient of this paper is the \nameref{lemma:joao} proved in Section~\ref{s:proof_joao}.
Setting up a renormalization scheme on a graph that is not $\mathbb{Z}^d$ requires a good understanding of the geometry of the graph in question and it is during the proof of \nameref{lemma:joao} that this difficulty is revealed.
For this proof we make strong use of the isoperimetric inequality and rough transitivity of $G$.

The proof of the \nameref{lemma:joao} follows three main steps.
Recall that we are assuming the occurrence of $\S(o, L_{k+1})$, which provides us with two large sets $A^0$, $A^1 \subseteq B(x, 3 L_{k+1})$ which cannot be connected by an open path in $B(x, 3 L_{k+1}^2)$.
Our aim is to show the existence of such separation events in various balls of size $L_k$ inside $B(o, 2L_{k+1}^2)$.
\begin{enumerate}[\quad i)]
\item The first step of the proof will be to reduce the quest of finding separation events $\S(y_i, L_k)$ into simply connecting $A^0$ with $A^1$ through several paths.
This is the content of Lemma~\ref{lemma:new_lemma_3.2}.
\item Therefore, we can assume by contradiction that there exists two sets $A^0$ and $A^1$ which cannot be connected by several paths as above.
However, the isoperimetric inequality \eqref{eq:isoperimetric} guarantees the existence of several disjoint paths (not necessarily open) connecting $A^0$ to distance $L_{k+1}^2$ (similarly for $A^1$), see Lemma~\ref{lemma:N''}.
\item Roughly speaking, in the last step we use the existence of $A^0$ and $A^1$ above in order to embed a binary tree into $G$, which would contradict the polynomial growth of this graph.
We start with the ball $B(o, 3L_{k+1})$ (where the sets $A^0$ and $A^1$ reside) and two paths from the previous step as a building block.
They will respectively represent the root $\varnothing$ of the binary tree and the edges connecting $\varnothing$ to its descendants.
Finally we use the rough transitivity of $G$ to replicate this pattern.
Arguing in a recursive way we obtain the desired embedding, which leads to a contradiction on the polynomial growth of $G$.
\end{enumerate}
Steps $i)$ and $iii)$ are illustrated in Figures~\ref{f:six_balls} and \ref{f:crab_party} respectively.

\bigskip

This paper is organized as follows. In Section~\ref{s:notation} we introduce some preliminary notation and prove an  auxiliary result, followed by Section~\ref{s:proof_pc>0_dep}, where we show Theorem~\ref{thm:p_c>0_dependent}.

In Section~\ref{s:reduction} we define the separation events $S(x, L)$ and state two fundamental intermediate results (Lemmas~\ref{lemma:joao} and \ref{lemma:lego}).
Then, assuming their validity, we prove Theorem~\ref{thm:p_c<1_dependent}, which corresponds to \textit{Step b)} in the outline of the proof of our main results.

Section~\ref{s:proof_joao} is devoted to proving the \nameref{lemma:joao} and is split into three subsections.
Each of these subsections correspond to one step in the above sketch.
Finally we show Lemma~\ref{lemma:lego} in Section~\ref{s:lego}, and we in Section \ref{s:examples} we present some examples of dependent percolation processes for which our results apply.
We conclude with the proof of Corollary~\ref{thm:p_c<1} in Appendix~\ref{s:appendix}.

\subsection*{Acknowledgments}
We are grateful to Yuval Peres, G\'abor Pete, Russell Lyons and Itai Benjamini for bringing to our attention some fundamental references and suggestions.
Thanks also to Mikhail Belolipetski for fruitful discussions.
We are grateful to an anonymous referee for carefully reading this manuscript and making numerous useful suggestions that contributed to improve the paper.

A.T. is grateful to CNPq for its financial contribution to this work through the grants 306348/2012-8 and 478577/2012-5.
This work began during a visit of E.C.\ to IMPA, that she thanks for the support and hospitality.

\section{Notation and auxiliary results}
\label{s:notation}

In this section we introduce some notation and prove some auxiliary results that will be useful throughout the paper.

\subsection{Notation}

For every finite set $A\subset V$ we denote by $|A|$ its cardinality, and by $\partial A$ its edge boundary:
\[
\partial A := \big\{ \{x,y\}\in E \ : \ x\in A \textnormal{ and }y\notin A \big\}.
\]
Analogously, its internal vertex boundary is denoted by
\[
\partial_i A := \big\{ x \in A \ : \text{ there exists $y \in V \setminus A$ such that $\{x, y\} \in E$} \big\}.
\]
For any two vertices $x,y\in V$ we will denote by $d(x,y)$ the \emph{graph distance} between $x$ and $y$, i.e., the minimum number of edges contained in a path that goes from $x$ to $y$.
Analogously, for any two sets $A,B\subset V$ we set
\[
d(A,B):= \min \{ d(a,b) \ : \ a\in A, b\in B\}.
\]
By $B(x, R) $ we denote the ball centered at $x$ and of radius $R \geq 0$ in the graph distance, more precisely, $w\in V$ belongs to $B(x, R) $ if and only if $d(x,w)\leq R$.
Let us define the growth function
\begin{equation}
  \bar{v}_G(r) = \sup_{x \in G} |B(x, r)|,
\end{equation}
where we may omit the sub-index in $v_G$ if it is clear from the context.
\begin{Remark}\label{remark:note_v}
Note that if \eqref{eq:volume_upper_bound} holds, then we have $\bar{v}_G(r)\leq c_u r^{d_u}$.
\end{Remark}
Independent percolation (sometimes called Bernoulli) can be described as follows.
We associate for each vertex $x \in V$ an independent coin toss with success parameter $p \in [0,1]$, in case of success we say that the vertex is \emph{open} otherwise we call it \emph{closed}.
This gives rise to a random sub-graph $\mathbb{G}_p$ of $G$, induced by the set of open vertices.

One of the most interesting features of this model is that for several graphs it presents a phase transition at a critical value $p_c \in (0,1)$.
To make the above statement more precise, we define the critical value $p_c = p_c(G)$ as follows
\begin{equation}
  \label{e:p_c}
  p_c := \sup \{p\in [0,1] \ : \ \P [\textnormal{there exists an infinite cluster on $\mathbb{G}_p$}] = 0\}.
\end{equation}
It follows that, for $p < p_c$, the induced sub-graph contains almost surely only finite connected components, while for $p > p_c$ it contains almost surely at least one infinite cluster.
See \cite{Gri99} for a proof that $p_c \in (0, 1)$ for the case $V = \mathbb{Z}^d$, $d \geq 2$, endowed with edges connecting nearest neighbors vertices.

\subsection{Some remarks about rough isometries}
The results presented here follow the exposition of \cite{elek}, to which the reader is referred for more details.
Suppose that $\phi: G \to G'$ is a $\uc{c:rough_trans}$-rough isometry.
Then for any set $A \subseteq G$ we have
\begin{equation}
  \label{e:large_image}
  |\phi(A)| \geq \frac{|A|}{\bar{v}_G(\uc{c:rough_trans})}.
\end{equation}
In fact, if $d(x, y) \geq \uc{c:rough_trans}$, then $\phi(x) \neq \phi(y)$ by \eqref{e:rough_iso}.
This implies that at most $\bar{v}_G(\uc{c:rough_trans})$ many points can share the same image under $\phi$ in $G'$.

Another interesting property of rough isometries is that they are almost invertible, in the following sense.
\begin{display}
  \label{e:rough_inverse}
  Given a $\uc{c:rough_trans}$-rough isometry $\phi:G \to G'$, there is a
$4\uc{c:rough_trans}^2 $-rough isometry $\psi:G' \to G$ such that $d(x, \phi \circ \psi(x)) \leq \uc{c:rough_trans}$ for any $x \in V$.
\end{display}
Indeed, let us define $\psi(x')$ as the point $x \in V$ such that $d(x', \phi(x))$ is minimized (choosing arbitrarily in case of ties).
First of all, observe by \eqref{e:rough_surj} that $d(x', \phi \circ \psi(x')) \leq \uc{c:rough_trans}$.
We now show that $\psi$ is a $4 \uc{c:rough_trans}^2$-rough isometry and for this fix $x', y' \in G'$.
We can assume that $x' \neq y'$ (the other case is trivial), then one estimates
\[
  \begin{split}
    \frac{1}{4 \uc{c:rough_trans}^2} \; d\big( x', y' \big) - 1 & \leq \frac{1}{4 \uc{c:rough_trans}^2} \Big( d\big ( x', \phi \circ \psi(x')\big )+d\big ( y', \phi \circ \psi(y')\big )+ d\big( \phi \circ \psi(x'), \phi \circ \psi(y') \big) \Big) - 1\\
    &
\stackrel{\eqref{e:rough_surj}}{\leq} \frac{1}{4 \uc{c:rough_trans}^2} \Big( d\big( \phi \circ \psi(x'), \phi \circ \psi(y') \big) + 2 \uc{c:rough_trans} \Big) - 1 \\
& \overset{\eqref{e:rough_iso}}< \frac{1}{4\uc{c:rough_trans}} d\big( \psi(x'), \psi(y') \big)+\frac{1}{\uc{c:rough_trans}}-1\\
    & \stackrel{\uc{c:rough_trans} \geq 1}{\leq} d\big( \psi(x'), \psi(y') \big) \overset{\eqref{e:rough_iso}}\leq \uc{c:rough_trans} d\big( \phi \circ \psi(x'), \phi \circ \psi(y') \big) + \uc{c:rough_trans}\\
    & \leq \uc{c:rough_trans} \Big ( d\big (x',\phi \circ \psi(x')\big )+d\big ( \phi \circ \psi(y'),y'\big ) +d\big( x', y' \big) \Big ) + \uc{c:rough_trans} \\
    & \stackrel{\eqref{e:rough_surj}}{\leq} \uc{c:rough_trans} d\big( x', y' \big) + 2 \uc{c:rough_trans}^2 + \uc{c:rough_trans} \overset{\uc{c:rough_trans} \geq 1}\leq 4 \uc{c:rough_trans}^2 d(x', y').
  \end{split}
\]
Also, if $x'$ belongs to the image of $\phi$, then $d(\phi(\psi(x')), x') = 0$, so that $d(\psi(\phi(x)), x) \leq \uc{c:rough_trans}$, and consequently \eqref{e:rough_surj} also holds for $\psi$.
This concludes the proof of \eqref{e:rough_inverse}

\begin{Remark}
It would be tempting to say that every roughly transitive graph is roughly isomorphic to a transitive one.
This is however not the case, as shown in \cite[Proposition~2]{elek}.
Moreover, the counterexample built in \cite{elek} has indeed polynomial growth, hence implying that our statements cannot be deduced from simple strengthening of previous results.

We would like also to recall Open Question~2.3 of \cite{zbMATH06243708}: ``Is there an infinite $\uc{c:rough_trans}$-roughly transitive graph, which is not roughly-isometric to a homogeneous space, where a homogeneous space is a metric space with a transitive isometry group?''

On the other hand, recall from Remark~\ref{r:advantages} e) that the techniques presented here are believed to work beyond the case of roughly transitive graphs.
\end{Remark}

\subsection{Paving}

For the next lemma, we need also to introduce a lower bound on the volume growth of balls on $G$.
\begin{Definition}
Given constants $c_l, d_l > 0$, we say that $G$ satisfies $\mathcal{L}(c_l, d_l)$ if for every \emph{real} number $r \geq 1$ and every site $x \in V$
\begin{equation}
  \label{eq:volume_lower_bound}
  |B(x,r)| \geq c_l r^{d_l}.
\end{equation}
\end{Definition}
Note that every infinite connected graph satisfies the above bound for $d_l = 1$ and we don't need more than this for our proofs.
However, if one knew in advance that the above condition holds for some $d_l > 1$, the final results will be improved through a smaller $\alpha_{\ast}$ or $\alpha_{\ast \ast}$, see \eqref{e:alpha_star}.

Proposition~\ref{lemma:paving} below allows us to cover a large ball of radius $r^2$ with smaller balls of radius $s$.
This can be thought of as a replacement for paving arguments for renormalization procedures on the lattice $\mathbb{Z}^d$.

In the following, for any set of vertices $K\subset V$, define
\[
B(K, s):= \bigcup_{y\in K}B(y,s).
\]
\nc{c:paving}
\begin{Proposition}
  \label{lemma:paving}
  If $G = (V, E)$ satisfies the volume growth estimates $\mathcal{V}(c_u, d_u)$ and $\mathcal{L}(c_l, d_l)$, then there is a constant $\uc{c:paving} = \uc{c:paving}(c_l, d_l, c_u, d_u)$ such that
  \begin{display}
    \label{eq:paving}
    for every $r \geq 1$ and $ s\in [ 2 , 2r^2]$, for every $x\in V$, there exist $K \subseteq B(x, 2r^2)$, \\
    such that $B(x, r^2) \subseteq B(K, s)$ and $|K| \leq \uc{c:paving}\frac{r^{2d_u}}{s^{d_l}}$.
  \end{display}
\end{Proposition}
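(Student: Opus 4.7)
The plan is to obtain $K$ as a maximal $s$-net inside $B(x, r^2)$, and then bound its cardinality by packing disjoint balls of radius $\approx s/2$ around the points of $K$ into the slightly larger ball $B(x, 2r^2)$.

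More concretely, first I would let $K$ be a maximal subset of $B(x, r^2)$ such that any two distinct points of $K$ lie at graph distance at least $s$; such a $K$ exists by a greedy construction, since $B(x, r^2)$ is finite. This immediately yields $K \subseteq B(x, r^2) \subseteq B(x, 2 r^2)$, which is the inclusion required by \eqref{eq:paving}. For the covering property, note that by maximality every $z \in B(x, r^2)$ must satisfy $d(z, y) < s$ for some $y \in K$ (otherwise $K \cup \{z\}$ would still be $s$-separated), so $B(x, r^2) \subseteq B(K, s)$.

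Next, set $t := \lfloor (s-1)/2 \rfloor$. I claim that the balls $\{ B(y, t) : y \in K \}$ are pairwise disjoint: any common point $z \in B(y_1, t) \cap B(y_2, t)$ with $y_1 \neq y_2$ would force $d(y_1, y_2) \leq 2 t \leq s - 1$, contradicting the $s$-separation of $K$. Moreover, since $s \leq 2 r^2$ we have $t \leq r^2$, so each such ball is contained in $B(x, r^2 + t) \subseteq B(x, 2 r^2)$. In the regime $s \geq 3$ one has $t \geq 1$ (so that $\mathcal{L}(c_l, d_l)$ applies) and also $t \geq s/4$ by a direct check. Combining the volume upper bound on $B(x, 2 r^2)$ with the volume lower bound on each $B(y, t)$, disjointness gives
\[
|K| \cdot c_l (s/4)^{d_l} \leq \sum_{y \in K} |B(y, t)| \leq |B(x, 2 r^2)| \leq c_u (2 r^2)^{d_u},
\]
which produces the desired estimate with $\uc{c:paving}$ depending only on $c_l, d_l, c_u, d_u$.

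The only edge case to worry about is $s = 2$, where $t = 0$ and the packing argument degenerates; here I would simply use the trivial bound $|K| \leq |B(x, r^2)| \leq c_u r^{2 d_u}$, noting that $s^{d_l} = 2^{d_l}$ is a constant that can be absorbed into $\uc{c:paving}$. There is no serious obstacle: the argument is entirely a standard net/packing computation, and the only care needed is in the floor-function bookkeeping that confirms $t$ is comparable to $s$ when $s \geq 3$ and that the packed balls all sit inside $B(x, 2 r^2)$, which is exactly where the hypothesis $s \leq 2 r^2$ is used.
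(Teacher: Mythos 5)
Your proposal is correct and follows essentially the same maximal-net/packing argument as the paper's proof; the only cosmetic differences are that you take the maximal $s$-separated set inside $B(x,r^2)$ rather than $B(x,2r^2)$ and handle the floor-function/$s=2$ bookkeeping explicitly, whereas the paper uses balls of radius $s/2$ landing inside $B(x,2r^2+s)$ and absorbs the small-$r$ cases by enlarging the constant.
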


\begin{proof}
Fix $s$ in the range given in the hypothesis and take the set $K \subseteq B(x, 2r^2)$ to be an arbitrary \emph{maximal} set satisfying
\begin{display}
  \label{e:mutually_far}
  $d(y, y') \geq s$ for every $y, y' \in K$.
\end{display}
Since $K$ is maximal, it is also an $s$-net of $B(x, 2r^2)$, or in other words $B(x, 2r^2) \subseteq B(K, s)$.
By \eqref{e:mutually_far}, all the balls $\{B\bigl (y,  s/3 \bigr )\}_{y\in K}$ are disjoint.
Therefore, by the lower bound $\mathcal{L}(c_l, d_l)$ we obtain
\[
\bigl |B(K,  s/3 )\bigr |=\sum_{y\in K} \bigl |B(y,  s/3)\bigr | \geq |K| c_l \, \frac{ s^{d_l}}{3^{d_l} }.
\]
On the other hand, $|B(K, s )|\leq |B(x, 2r^2+s)|\leq c_u (2r^2+s)^{d_u}\leq c_u (4r^2)^{d_u}.$
By putting together these two facts, we obtain that there is a positive constant $\uc{c:paving}=\uc{c:paving}(c_l,d_l,c_u,d_u)$ such that
\[
|K|\leq \uc{c:paving} \frac{r^{2d_u}}{s^{d_l}}.
\]
The above argument implies that there exists a set $K\subseteq B(o, 2r^2)$ such that the statement holds.
\end{proof}

\subsection{Decoupling several events}

Our next statement is a consequence of the decoupling inequality from Definition~\ref{def:decoupling}.

\begin{Proposition}\label{claim:lemma4.2}
Suppose that $\P$ satisfies the decoupling inequality $\mathcal{D}(\alpha,c_\alpha)$ for some $\alpha>0$.
Now fix any value of $r\geq 1$, an integer $J'\geq 2$ and distinct points $y_1, y_2, \ldots, y_{J'} \in V$ such that
\[
\min_{1 \leq i < j \leq J'} d(y_i, y_j)\geq 3r.
\]
Then for any set of events $\mathcal{G}_1, \ldots, \mathcal{G}_{J'} $ such that $ \mathcal{G}_i\in \sigma (Y_z,  z \in B(y_i, r))$ we have
\begin{equation}
  \label{e:decouple_various}
  \P\bigl ( \mathcal{G}_1\cap \ldots \cap \mathcal{G}_{J'}\bigr ) \leq \bigl ( \P(\mathcal{G}_1)+c_\alpha r^{-\alpha}\bigr )\dots \bigl ( \P(\mathcal{G}_{J'})+c_\alpha r^{-\alpha}\bigr ).
\end{equation}
\end{Proposition}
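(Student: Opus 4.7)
The natural approach is induction on $J'$, peeling off one event at a time using the two-event decoupling inequality from Definition~\ref{def:decoupling}. The only nontrivial ingredient beyond the direct appeal to $\mathcal{D}(\alpha,c_\alpha)$ is verifying the $\sigma$-algebra condition at each step, which follows from a triangle inequality using the hypothesis $d(y_i,y_j)\geq 3r$.

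More concretely, for the base case $J'=2$, I would apply $\mathcal{D}(\alpha,c_\alpha)$ with $x=y_1$, $\mathcal{G}=\mathcal{G}_1$, and $\mathcal{G}'=\mathcal{G}_2$. The assumption $\mathcal{G}_1\in\sigma(Y_z,z\in B(y_1,r))$ is direct; for $\mathcal{G}_2$, any $w\in B(y_2,r)$ satisfies $d(w,y_1)\geq d(y_1,y_2)-d(y_2,w)\geq 3r-r=2r$, so $B(y_2,r)$ lies outside $B(y_1,2r)$ and thus $\mathcal{G}_2\in\sigma(Y_w,w\notin B(y_1,2r))$.

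For the inductive step, assuming \eqref{e:decouple_various} for $J'-1$, I consider the event $\mathcal{H}:=\mathcal{G}_2\cap\cdots\cap\mathcal{G}_{J'}$. As an intersection of decreasing events, $\mathcal{H}$ is again decreasing, and since each $B(y_j,r)$ with $j\geq 2$ lies outside $B(y_1,2r)$ by the same triangle inequality estimate, $\mathcal{H}\in\sigma(Y_w,w\notin B(y_1,2r))$. Applying $\mathcal{D}(\alpha,c_\alpha)$ to $\mathcal{G}_1$ and $\mathcal{H}$ yields
\[
\P(\mathcal{G}_1\cap\mathcal{H})\leq\bigl(\P(\mathcal{G}_1)+c_\alpha r^{-\alpha}\bigr)\P(\mathcal{H}).
\]
The inductive hypothesis applied to the $J'-1$ events $\mathcal{G}_2,\ldots,\mathcal{G}_{J'}$, whose centers still satisfy the pairwise separation $\geq 3r$, bounds $\P(\mathcal{H})$ by $\prod_{i=2}^{J'}(\P(\mathcal{G}_i)+c_\alpha r^{-\alpha})$, and combining the two inequalities gives \eqref{e:decouple_various}.

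I do not anticipate any serious obstacle here: the content of the argument is purely bookkeeping, and the only thing to be careful with is checking the $\sigma$-algebra inclusions so that Definition~\ref{def:decoupling} legitimately applies at each peeling step. The decreasing assumption on the $\mathcal{G}_i$ is used exactly so that the partial intersections $\mathcal{H}$ remain admissible inputs (decreasing events) for the decoupling inequality throughout the induction.
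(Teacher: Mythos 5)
Your proof is correct and takes essentially the same approach as the paper's: both iterate the two-event decoupling $\mathcal{D}(\alpha,c_\alpha)$ to peel off one event at a time, using the triangle inequality and the hypothesis $d(y_i,y_j)\geq 3r$ to verify the $\sigma$-algebra support conditions at each step. You peel $\mathcal{G}_1$ from the front while the paper peels $\mathcal{G}_{J'}$ from the back, which is immaterial. You also spell out the support checks more carefully, which the paper's (very terse) proof leaves implicit.

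One inaccuracy in your closing remark: you claim the decreasing hypothesis on the $\mathcal{G}_i$ is needed ``so that the partial intersections $\mathcal{H}$ remain admissible inputs (decreasing events) for the decoupling inequality.'' But Definition~\ref{def:decoupling} imposes no monotonicity requirement whatsoever on $\mathcal{G}$ and $\mathcal{G}'$ --- it only restricts the $\sigma$-algebras they belong to. So the decreasing hypothesis is in fact unused, both in your argument and in the paper's. (It is likely stated in the proposition out of caution for applications to models where the decoupling inequality is known only for monotone events, or as a leftover hypothesis; indeed the paper later applies this proposition to the \emph{increasing} crossing events $\mathcal{T}(x,L)$ in Lemma~\ref{l:cascade_decays}.) Everything else in your proposal is sound.
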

\begin{proof}
The proof is immediate from Definition \ref{def:decoupling}.
In fact, setting $\mathcal{G}' = \mathcal{G}_1 \cap \dots \cap \mathcal{G}_{J' - 1}$,
\begin{equation}
\P\bigl ( \mathcal{G}_1\cap \ldots \cap \mathcal{G}_{J'}\bigr ) \overset{\mathcal{D}(\alpha, c_\alpha)}\leq \left (\P ( \mathcal{G}_{J'})+c_\alpha r^{-\alpha}\right  ) \P\bigl ( \mathcal{G}_1\cap \ldots \cap \mathcal{G}_{J'-1}\bigr ).
\end{equation}
By iterating this calculation, we obtain the statement.
\end{proof}
\begin{Remark}\label{remark:c-alpha}
Note that in the above lemma we allow $c_\alpha$ to depend on the value $J'$.
\end{Remark}
\begin{Remark}
Here we emphasize that all throughout the paper we always make use of \eqref{e:decouple_various}, which is implied by Definition~\ref{def:decoupling}.
\end{Remark}

\section{Proof of  Theorem~\ref{thm:p_c>0_dependent}}\label{s:proof_pc>0_dep}

This proof is inspired by previous renormalization procedures that were developed for $\mathbb{Z}^d$, see for instance \cite{Szn09}.
Here we adapt them to work on more general classes of graphs.
Although Theorem~\ref{thm:p_c>0_dependent} is not the central result of the current article, we present its proof before for two reasons.
First, it is a warm-up to the proof of Theorem~\ref{thm:p_c<1_dependent} and secondly, it includes some lemmas that will be useful later in the text.
\begin{Remark}
We remark here that, for convenience, we will prove the result on the \emph{diameter} of the largest component, which is equivalent to the previous statement since $\theta$ is arbitrary.
\end{Remark}
Let us first define what we call the crossing event
\begin{equation}
  \label{e:crossing_events}
  \mathcal{T}(x, L) =
  \bigg[
  \begin{array}{c}
    \text{there is an open path from $B(x,3L)$ to $\partial B(x, 3L^2)$}
  \end{array}
  \bigg].
\end{equation}

Our main argument shows the decay of the probabilities of $\mathcal{T}(x, L)$ following a renormalization scheme.
This procedure relates the probabilities of the above events at different scales, that we now introduce.

Given some integer $\gamma \geq 3$, we set
\begin{equation} \label{eq:inductive_def_L_k}
  L_0:= 10 000, \textnormal{ and } L_{k+1} = L_k^\gamma, \ \textnormal{ for all } k \geq 0.
\end{equation}

\begin{Remark}
We have not yet chosen $\gamma$ because it will assume different values for the proofs of Theorems~\ref{thm:p_c<1_dependent} and \ref{thm:p_c>0_dependent}, see Remarks~\ref{remark:alpha_double_star} and \ref{remark:alpha_star} below.
\end{Remark}

In the next definition we introduce the concept of a \emph{cascading} family of events.
Intuitively speaking, it means that if some event occurs at a given scale $L_{k+1}$, then it must also occur several times in the previous scale $L_k$ in well separated regions.

\nc{c:k_def_joao}
\begin{Definition}
  \label{d:joao}
  We say that a family of events $\big( \mathcal{E}(x, L_k) \big)_{x \in V, k \geq 1}$ is \emph{cascading} if for any $J \geq 1$ there exists $\uc{c:k_def_joao} = \uc{c:k_def_joao}(G, J, \gamma)$ for which the following holds.
  Fix any $x \in V$, $k \geq \uc{c:k_def_joao}$ and set $K \subseteq B(x, 2L_{k+1}^2)$ such that $B(K, L_k)$ covers $B(x, L_{k+1}^2)$.
  Then, if the event $\mathcal{E}(x,L_{k+1})$ occurs, there exists a sequence
  \begin{equation}
    y_1, y_2, \ldots ,y_J \in K \text{ with $d(y_j, y_l) \geq 9L_k^2$ for all $j \neq l$}
  \end{equation}
  and such that $\mathcal{E}(y_j, L_k)$ occurs for all $j \leq J$.
\end{Definition}

The importance of the above definition is that it allows us to relate the probabilities of events $\mathcal{E}$ at different scales using recursive inequalities together with the decoupling provided by $\mathcal{D}(\alpha, c_\alpha)$.

\nc{c:cross_cas}
\begin{Lemma}
  \label{l:cross_cas}
  The family of events $\{\mathcal{T}(x, L)\}$ defined in \eqref{e:crossing_events} is cascading in the sense of Definition~\ref{d:joao}.
\end{Lemma}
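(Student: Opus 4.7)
My plan is to argue purely geometrically, with no use of the decoupling or isoperimetric inequalities: if $\mathcal{T}(x, L_{k+1})$ holds then the witnessing open path is so long that it must force $J$ disjoint crossings at scale $L_k$ whose base-points can be chosen from $K$.

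More precisely, suppose $\mathcal{T}(x, L_{k+1})$ occurs, fix an open path $\pi=(\pi_0,\pi_1,\ldots,\pi_N)$ with $\pi_0\in B(x,3L_{k+1})$ and $\pi_N\in \partial B(x,3L_{k+1}^2)$, and let $K$ be a set as in Definition~\ref{d:joao}. Since $t\mapsto d(x,\pi_t)$ changes by at most one at each step, a discrete intermediate value argument shows that for every integer $r\in[3L_{k+1},\,3L_{k+1}^2]$ there is some $t$ with $d(x,\pi_t)=r$. I choose $J$ integer radii
\[
  r_1<r_2<\cdots<r_J,\qquad r_j\in[3L_{k+1},\,L_{k+1}^2],\qquad r_{j+1}-r_j\geq 9L_k^2+2L_k,
\]
which is possible as soon as $(J-1)(9L_k^2+2L_k)+3L_{k+1}\leq L_{k+1}^2$. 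Since $L_{k+1}=L_k^{\gamma}$ with $\gamma\geq 2$, this inequality holds for every $k$ larger than a threshold $\uc{c:k_def_joao}=\uc{c:k_def_joao}(G,J,\gamma)$. For each $j$, pick a time $t_j$ with $d(x,\pi_{t_j})=r_j$ and set $z_j:=\pi_{t_j}\in B(x,L_{k+1}^2)$; since $B(K,L_k)$ covers $B(x,L_{k+1}^2)$, I may then select $y_j\in K$ with $d(y_j,z_j)\leq L_k$.

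It remains to verify the two requirements. For the mutual separation, the triangle inequality gives, for $j<l$,
\[
  d(y_j,y_l)\geq d(x,y_l)-d(x,y_j)\geq (r_l-L_k)-(r_j+L_k)\geq 9L_k^2.
\]
For $\mathcal{T}(y_j,L_k)$: by construction $z_j\in B(y_j,L_k)\subseteq B(y_j,3L_k)$, while $\pi_N$ lies at distance at least $3L_{k+1}^2-(r_j+L_k)\geq 2L_{k+1}^2-L_k$ from $y_j$, which exceeds $3L_k^2$ once $k\geq \uc{c:k_def_joao}$ (again because $\gamma\geq 2$). Hence the sub-path of $\pi$ starting at $\pi_{t_j}$ is an open path joining $B(y_j,3L_k)$ to a vertex outside $B(y_j,3L_k^2)$, and in particular it crosses $\partial B(y_j,3L_k^2)$, witnessing $\mathcal{T}(y_j,L_k)$.

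The only mildly delicate bookkeeping is absorbing all the ``$k$ large enough'' requirements into a single threshold $\uc{c:k_def_joao}(G,J,\gamma)$, but each of them is a polynomial inequality in $L_k$ whose dominant power is $L_{k+1}^2=L_k^{2\gamma}$, so this is routine. I do not expect any genuine obstacle: the whole argument is elementary, which is consistent with the paper's comment that the substantial work is reserved for the analogous statement (the \nameref{lemma:joao}) concerning the richer separation events $\S(x,L)$, where isoperimetry and rough transitivity are genuinely needed.
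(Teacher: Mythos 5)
Your argument is correct and is essentially the same as the paper's: both pick points on the crossing path $\pi$ at increasing, well-separated distances from $x$ (the paper via nested spheres $\partial B(x, 3L_{k+1}+30jL_k^2)$, you via the discrete intermediate-value observation with spacing $9L_k^2+2L_k$), cover each by a ball $B(y_j,L_k)$ with $y_j\in K$, and observe that the corresponding sub-path of $\pi$ witnesses $\mathcal{T}(y_j,L_k)$. The only difference is cosmetic bookkeeping in the choice of radii, so there is nothing to add.
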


\begin{proof} 
  We first fix $J \geq 1$ and let $\uc{c:cross_cas} \geq 1$ be such that for all $k\geq \uc{c:cross_cas}$ we have
  \begin{equation}
    \label{e:choose_c_cross_cas}
    3L_{k+1} + 30J L_k^2 \leq L_{k+1}^2,
  \end{equation}
  which can be done by our choice of scales in \eqref{eq:inductive_def_L_k}.

  To prove that the events $\mathcal{T}(x, L)$ are cascading, let us pick $k \geq \uc{c:cross_cas}$, $x \in V$ and assume that $\mathcal{T}(x, L_{k+1})$ occurs, that is
  \begin{display}
    there exists an open path $\sigma$ from $B(x, 3L_{k+1})$ to $\partial B(x, 3L_{k+1}^2)$.
  \end{display}
  Let us consider the concentric spheres $S_j = \partial B(x, 3L_{k+1} + (30j)L_k^2)$, for $j = 1, \dots, J$.
  Note that all these spheres are contained in $B(x, L_{k+1}^2)$ by \eqref{e:choose_c_cross_cas}.

  We now let $x_j$ be the first point of intersection of the path $\sigma$ to $S_j$.
  Given the set $K$ as in Definition~\ref{d:joao} (or more precisely, such that $ K\subset B(x, 2L_{k+1}^2)$ and $B(K, L_k)$ covers $B(x, L_{k+1}^2)$) we can pick $y_j \in K$ ($j \leq J$) such that $x_j \in B(y_j, L_k)$.

  We see that the distance between two distinct $y_j$'s is at least
  \begin{equation}
    d(y_j, y_{j'}) \geq d(x_j, x_{j'}) - 2L_k \geq 30L_k^2 - 2L_k \geq 9L_k^2
  \end{equation}
  as required in Definition~\ref{d:joao}.
  To finish the proof, observe that the open path $\sigma$ that guarantees the occurrence of $\mathcal{T}(x, L_{k+1})$ can be split into pieces that show the occurrence of $\mathcal{T}(y_j, L_k)$, for $j \leq J$.
  The piece corresponding to $j$ can be constructed for instance by picking the first time $\sigma$ touches $x_j$ until it first exits $B(y_j, 3L_k^2)$.
  This finishes the proof of the lemma.
\end{proof}

\begin{Remark}
  In the next section we will turn to the proof of Theorem~\ref{thm:p_c<1_dependent} and for this we define another family of events (denoted by $\mathcal{S}(x,L)$) and prove a result which is analogous to Lemma~\ref{l:cross_cas}, namely the \nameref{lemma:joao}.
  However, the proof that the events $\mathcal{S}(x, L)$ are cascading will be more involved.

  It is important to observe that some definitions and arguments in this section were written in such a way that they can be used also during the proof of Theorem~\ref{thm:p_c<1_dependent}, instead of optimizing for brevity.
\end{Remark}

The importance of the definition of cascading events will become clear in the following bootstrapping result.
Given a scale sequence $L_k$ as in a family of events $\mathcal{E}(x, L_k)$ (each event must be measurable with respect to what happens in $B(x,3L_k^2)$) let
\begin{equation}
  p_k^{\mathcal{E}} = \sup_{x \in V} \mathbb{P}(\mathcal{E}(x, L_k)).
\end{equation}

\nc{c:cascade_decays}
\begin{Lemma}
  \label{l:cascade_decays}
  Suppose that $G$ satisfies $\mathcal{V}(c_u, d_u)$ and $\mathcal{L}(c_l, d_l)$ and $\mathbb{P}$ has the decoupling inequality $\mathcal{D}(\alpha, c_\alpha)$, for $\alpha > \gamma d_u - d_l/2$.
  Moreover, let $\mathcal{E}(x, L_k)$ be a family of events which is cascading in the sense of Definition~\ref{d:joao}, then for any $\beta \geq 2 \alpha$, there exists a constant $\uc{c:cascade_decays} = \uc{c:cascade_decays}(\beta, c_i, d_i, c_u, d_u, c_\alpha, \alpha, \uc{c:k_def_joao}) \geq 1$ such that
  \begin{display}
    \label{e:induct_cascade}
    if for some $k_o \geq \uc{c:cascade_decays}$ we have $p^{\mathcal{E}}_{k_o} \leq L_{k_o}^{-\beta}$ then $p^{\mathcal{E}}_{k} \leq L_{k}^{-\beta}$ for all $k \geq k_o$.
  \end{display}
\end{Lemma}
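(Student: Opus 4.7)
My plan is to proceed by induction on $k \geq k_o$, with base case $k = k_o$ given by the hypothesis. The inductive step should combine the cascading property of $\mathcal{E}$ with the paving result (Proposition~\ref{lemma:paving}) and the iterated decoupling of Proposition~\ref{claim:lemma4.2}, in a standard multi-scale renormalization style, in order to bound $p_{k+1}^{\mathcal{E}}$ in terms of $p_k^{\mathcal{E}}$.

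For the inductive step, I would fix an arbitrary $x \in V$ and aim to bound $\mathbb{P}(\mathcal{E}(x, L_{k+1}))$. First apply Proposition~\ref{lemma:paving} with $r = L_{k+1}$ and $s = L_k$ (which is in the allowed range for $k$ large) to obtain a set $K \subseteq B(x, 2L_{k+1}^2)$ whose $L_k$-neighborhood covers $B(x, L_{k+1}^2)$, with $|K| \leq \uc{c:paving}\, L_{k+1}^{2d_u}/L_k^{d_l} = \uc{c:paving}\, L_k^{2\gamma d_u - d_l}$. Next, the cascading property applied with a parameter $J$ (to be chosen) says that on $\mathcal{E}(x, L_{k+1})$ there exist $J$ points $y_1, \ldots, y_J \in K$ with pairwise distances at least $9L_k^2$ at which $\mathcal{E}(y_j, L_k)$ simultaneously occurs. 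A union bound over the at most $|K|^J$ admissible $J$-tuples then yields
\[
p_{k+1}^{\mathcal{E}} \leq |K|^J \sup \mathbb{P}\Big(\bigcap_{j=1}^J \mathcal{E}(y_j, L_k)\Big).
\]
Since the separation $9L_k^2 = 3 \cdot (3L_k^2)$ is exactly what Proposition~\ref{claim:lemma4.2} demands with $r = 3L_k^2$, and since the events of interest (such as $\mathcal{T}(y_j, L_k)$ and later $\mathcal{S}(y_j, L_k)$) are supported inside $B(y_j, 3L_k^2)$, the decoupling estimate gives
\[
\mathbb{P}\Big(\bigcap_{j=1}^J \mathcal{E}(y_j, L_k)\Big) \leq \big(p_k^{\mathcal{E}} + c_\alpha (3L_k^2)^{-\alpha}\big)^J \leq \big(L_k^{-\beta} + c' L_k^{-2\alpha}\big)^J.
\]

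Combining the two previous displays produces a recursion of the schematic form
\[
p_{k+1}^{\mathcal{E}} \leq C^J\, L_k^{J(2\gamma d_u - d_l)}\, \big(L_k^{-\beta} + c'\, L_k^{-2\alpha}\big)^J,
\]
and it remains to arrange that the right-hand side is at most $L_{k+1}^{-\beta} = L_k^{-\gamma \beta}$. The assumption $\alpha > 2\gamma d_u - d_l$ enters precisely here: the decoupling error $|K|\cdot L_k^{-2\alpha}$ carries the strictly negative net exponent $-(2\alpha - (2\gamma d_u - d_l))$, so that this term is innocuous and does not spoil the recursion. I would then pick $J = J(\beta)$ large enough that
\[
J\,\big(\min(\beta, 2\alpha) - (2\gamma d_u - d_l)\big) > \gamma \beta,
\]
which is possible under the stated hypothesis and makes the dominant term decay strictly faster than $L_k^{-\gamma\beta}$; finally I would take $k_o \geq \uc{c:cascade_decays}$ large enough that cascadingness holds from scale $k_o$ onward and that the $J$-dependent constant $C^J$ is absorbed into a fractional power of $L_{k_o}$.

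The main obstacle is this last balancing act, since the three parameters $J$, the paving exponent $2\gamma d_u - d_l$, and the decoupling exponent $2\alpha$ all pull against one another: a larger $J$ tightens the product of decoupled probabilities but simultaneously inflates both the combinatorial factor $|K|^J$ and the multiplicative constant $C^J$. The entire role of the assumption $\alpha > 2\gamma d_u - d_l$ is to open a window of admissible $J$ in which the recursion strictly improves, and the role of taking $k_o$ large is to convert this strict improvement into an actual inductive bound by burning an $\varepsilon$ of the gained exponent to cover the constants.
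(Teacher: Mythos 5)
Your proof follows essentially the same route as the paper's: Proposition~\ref{lemma:paving} produces a paving set $K$ of size $\lesssim L_k^{2\gamma d_u-d_l}$, the cascading property locates $J$ well-separated occurrences of $\mathcal{E}$ at scale $k$ inside $K$, a union bound over the $\le |K|^{J}$ admissible tuples is combined with the iterated decoupling of Proposition~\ref{claim:lemma4.2}, and a sufficiently large $J$ closes the induction. The only substantive point to flag is that your closing condition $J\bigl(\min(\beta,2\alpha)-(2\gamma d_u-d_l)\bigr)>\gamma\beta$ is satisfiable only when $\beta>2\gamma d_u-d_l$, so the phrase ``which is possible under the stated hypothesis'' is not quite right for small $\beta$; the paper sidesteps this by introducing an auxiliary $\beta'>\max\{\alpha,\beta\}$ (which automatically exceeds $2\gamma d_u-d_l$) and running the bootstrap at exponent $\beta'$. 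Since the lemma is only ever invoked with $\beta$ taken large, this is cosmetic, and your choice of decoupling radius $3L_k^2$ (giving error $\propto L_k^{-2\alpha}$ rather than the paper's cruder $L_k^{-\alpha}$) is in fact the sharper, more transparent one.
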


\begin{Remark}
  We note that the constant $\uc{c:k_def_joao}$ depends on the events under consideration.
  For Lemma~\ref{lemma:joao}, where we apply Lemma~\ref{l:cascade_decays}, we make explicit the dependencies of $\uc{c:k_def_joao}$.
\end{Remark}

\begin{proof}
Given $\beta \geq 2 \alpha $, pick an integer $J'$ such that
\[
J'\geq \max \left \{ 2, \frac{\gamma \beta + 1}{2 \alpha -(2 \gamma d_u - d_l)}\right \}.
\]
which is possible since $2 \alpha > 2 \gamma d_u - d_l$.

We can now apply Proposition~\ref{lemma:paving} and for any fixed $k\geq 1$ set $s:=L_k$ and $r:=L_{k+1}$, which gives us a set $K\subseteq B(o, 2L_{k+1}^2)$ such that
\[
|K|\leq \uc{c:paving}L_k^{2\gamma d_u-d_l} \qquad \text{and} \qquad B(o, L_{k+1}^2)\subseteq B(K,L_k).
\]
Our purpose is to bound the probabilities $p^{\mathcal{E}}_k$ using induction.
In fact, using the fact that the events $\mathcal{E}(x, L_k)$ are cascading, for all $k$ large enough we have:
\[
\begin{split}
p^{\mathcal{E}}_{k+1} & \leq \P \big[\exists \, y_1, \ldots , y_{J'}\in K\text{ at mutual distance at least }9L_k^2,\text{ s.t.\ }\mathcal{E}(y_i,L_k)\text{ occurs }\forall i\leq J' \big]\\
& \stackrel{\text{Prop.~}\ref{claim:lemma4.2}}{\leq} \left ( \uc{c:paving}L_k^{2\gamma d_u-d_l} \right )^{J'}\left ( p^{\mathcal{E}}_k+c_\alpha L_k^{-2 \alpha}\right )^{J'}.
\end{split}
\]
Assume as in \eqref{e:induct_cascade} that for some $k_0$ large enough we have $p^{\mathcal{E}}_{k_0} \leq L_{k_0}^{-\beta}$, we need to show that this condition holds for all $k \geq k_0$.
In fact, by using the fact that
\begin{equation}
( p^{\mathcal{E}}_{k_0}+c_\alpha L_{k_0}^{-2 \alpha} ) \leq (c_\alpha +1) L_{k_0}^{-\min\{2 \alpha,\beta\}} \overset{\beta \geq 2 \alpha}{\leq} (c_\alpha + 1) L_{k_0}^{-2 \alpha},
\end{equation}
we obtain:
\[
\frac{p^{\mathcal{E}}_{k_0+1}}{L_{k_0+1}^{-\beta}}\leq  \uc{c:paving}^{J'}(c_\alpha+1 )^{J'}L_{k_0}^{J'(2\gamma d_u-d_l)- 2 J' \alpha + \gamma \beta} \leq \uc{c:paving}^{J'}(c_\alpha+1 )^{J'}L_{k_0}^{-J'(2 \alpha -(2\gamma d_u-d_l))+\gamma \beta}.
\]
Note that we have chosen $J'$ such that the exponent $-J'(2 \alpha -(2\gamma d_u-d_l))+\gamma \beta$ is smaller or equal to $-1$, making the RHS above smaller than $1$ for all $k$ large enough, proving \eqref{e:induct_cascade} for $k = k_0 + 1$.
We can now continue the proof for every $k \geq k_0$ using induction.
\end{proof}

\begin{Remark}\label{remark:alpha_double_star}
Recall that in Theorem~\ref{thm:p_c>0_dependent} we have used the value $\alpha_{\ast \ast}$ without giving its precise value.
We can now introduce
\begin{equation}
  \label{e:alpha_double_star}
  \alpha_{\ast \ast} := 3 d_u - d_l/2.
\end{equation}
\end{Remark}

\begin{proof}[Proof of Theorem~\ref{thm:p_c>0_dependent}]
We now fix $\gamma = 3$ and let the scale sequence $(L_k)_{k \geq 0}$ be defined as in \eqref{eq:inductive_def_L_k}.
Observe also that for $\alpha > \alpha_{\ast \ast}$ as in \eqref{e:alpha_double_star}, we have $\alpha > \gamma d_u - d_l/2$ as required in Lemma~\ref{l:cascade_decays}.

Therefore, we are in position to apply Lemma~\ref{l:cascade_decays} for some arbitrarily chosen $ \beta > \max\{2\alpha, 6 \theta\}$.
In order to show that $p_k \leq L_k^{-\beta}$ for large enough $k$, we have simply to show that $p_{k_o} \leq L_{k_o}^{-\beta}$ for some $k_o \geq \uc{c:cascade_decays}$.

But by a simple union bound,
\begin{equation}
  p^{\mathcal{T}}_{k_o} = \sup_{x \in V} \mathbb{P}(\mathcal{T}(x, L_{k_o})) \leq \sup_{x\in V} \mathbb{P} \big[ Y_z = 1 \text{ for some } z \in B(x, L_{k_o}) \big] \leq c_{d_u} L_{k_o}^{d_u} \sup_{x \in V}[Y_x = 1].
\end{equation}
Therefore, as soon as
\begin{equation}
  \sup_{x \in V} \mathbb{P}[Y_x = 1] \leq \frac{1}{c_{d_u}} L_{k_o}^{- d_u - \beta},
\end{equation}
we have $p^{\mathcal{T}}_{k_o} \leq L_{k_o}^{-\beta}$ as desired and therefore $p_k \leq L_k^{-\beta}$ for all $k \geq k_o$.

To finish, given a large enough $r \geq 1$, take $\bar{k}$ such that $3L_{\bar{k}}^2 \leq r < 3L_{\bar{k}+1}^2$.
Then,
\begin{equation}
  r^\theta \P[\diam(\mathcal{C}_o) > r] \leq 3L_{\bar{k} + 1}^{2\theta} p^{\mathcal{T}}_{\bar{k}} \leq L_{\bar{k}}^{2\gamma \theta - \beta}
\end{equation}
The proof now follows from the fact that $\gamma=3 $ and $\beta > 6 \theta$.
\end{proof}

\section{Proof of Theorem~\ref{thm:p_c<1_dependent}}
\label{s:reduction}

The proof of Theorem~\ref{thm:p_c<1_dependent} follows the same lines of the previous section.
Again, for convenience, we will show the result on the diameter of the largest component, which is equivalent to the original statement, since $\theta$ is arbitrary.

We are going to define a family of events $\mathcal{S}(x, L)$ and then show that they are cascading in the sense of Definition~\ref{d:joao}.
This task will however be much more involved than in the previous section.

We now define what we call a \emph{separation event}.
This will play the role of a ``bad'' event whose probability we intend to bound from above.
Roughly speaking, the separation event says that inside a big ball one can find two large and separated clusters (which are not necessarily open).

Denoting by $\diam(Y)$ the diameter of the set $Y$, for every $x\in V$ and $L\in \R_+$, the separation event $\S(x,L)$ is defined as follows:
\begin{equation}
  \label{e:separation_event}
  \S(x,L):=
  \left [
  \begin{array}{c}
    \text{there are disjoint connected sets $A^0,A^1\subseteq B(x,3L)$}\\
    \text{with $\diam(A^i) \geq L/100$, such that there is no open path}\\
    \text{ contained in $B(x,3L^2)$ connecting $A^0$ with $A^1$}.
  \end{array}
  \right ].
\end{equation}
See Figure~\ref{f:six_balls} for an illustration of the above event.

\begin{Remark}
  \label{r:monotone}
  Observe that the above defined event is decreasing in the sense that if $\mathcal{S}(x, L)$ occurs and we close more vertices in $G$, then $\mathcal{S}(x, L)$ will also occur.
\end{Remark}

Recall the definitions of $d_i$ and $d_u$ from \eqref{eq:isoperimetric} and \eqref{eq:volume_upper_bound} respectively, and consider a fixed integer $\gamma \geq 3$ such that
\begin{equation}\label{eq:condition_gamma}
  \gamma \left ( \frac{d_i-1}{d_i}\right )>2d_u.
\end{equation}
Note that this is legitimate, by the assumptions of Theorem~\ref{thm:p_c<1_dependent}.
As above we set
\begin{equation}\label{eq:scales_new}
  L_0:= 10 000, \textnormal{ and } L_{k+1}=L_k^\gamma, \ \textnormal{ for all } k\geq 0.
\end{equation}

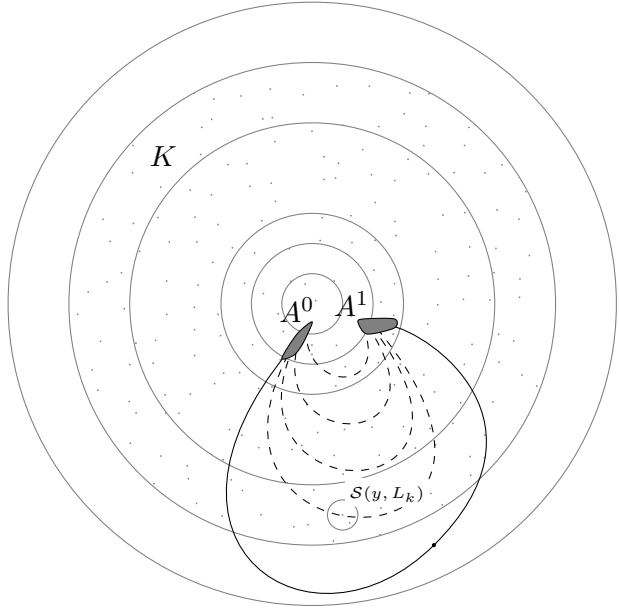
\begin{figure}[ht]
  \centering
  \begin{tikzpicture}[scale=0.4]
    \begin{scope}
      \clip (6,6) circle (8);
      \foreach \x in {1,...,16}
      { \foreach \y in {1,...,16}
        \draw[color=gray,fill=gray] (1 * \x + 0.4 * rand - 3, 1 * \y + 0.4 * rand - 3) circle (0.01);
      }
    \end{scope}
    \draw[fill, color=white] (.6,10.3) rectangle (1.5,11.3);
    \node at (1.1,10.9) {$K$};
    \draw[fill, color=white] (6.5,3.1) rectangle (7.5,2.1);
    \foreach \x in {1,...,3} { \draw[color=gray] (6, 6) circle (\x); }
    \foreach \x in {6,8,10} {\draw[color=gray] (6, 6) circle (\x);}
    \foreach \x in {0, 1, 2, 3}
    { \draw[dashed] (5.8 - 0.2 * \x, 5.2 - 0.3 * \x) .. controls (5.7 - 1.4 * \x, 3 - 2 * \x) and 
                    (8.7 + 2 * \x, 3 - 2 * \x) .. (7.6 + 0.2 * \x, 5.4 - 0.1 * \x);
    }
    \draw[color=gray] (7,-1) circle (.5);
    \draw[fill, color=gray] (7,-1) circle (.01);
    \draw[fill, color=white] (7.1,-.7) rectangle (9.8,.2);
    \node[below right] at (6.9,0.3) {\tiny $\mathcal{S}(y,L_k)$};
    \draw (5.2,4.4) .. controls (0, -2) and (6,-6) .. (10,-2) .. controls (14,2) and (10, 5) .. (8.5,5.3);
    \draw[fill] (10,-2) circle (0.05);
    \draw[fill=gray] plot [smooth cycle] coordinates {(6,5.4) (5.5,5) (5,4.2) (5.4,4.3) (5.7, 4.7)};
    \draw[fill=gray] plot [smooth cycle] coordinates {(7.5,5.4) (7.8,5) (8.7,5.2) (8.7,5.5) (7.7, 5.5)};
    \node[left] at (6.4,5.8) {$A^0$};
    \node[left] at (8.2,6) {$A^1$};
  \end{tikzpicture}
  \caption{The six balls $B(x,L_{k+1})$, $B(x,2L_{k+1})$, $ B(x,3L_{k+1})$ and $B(x,L_{k+1}^2)$, $ B(x,2L_{k+1}^2)$, $ B(x,3L_{k+1}^2)$.
    The sets $A^0$ and $A^1$ from the definition of $\S(x,L_{k+1})$ are pictured, together with a solid path connecting them.
    According to the definition of $\S(x,L_{k+1})$, this solid path must pass through a closed vertex.
    The gray dots in the picture represent the set $K$ from Proposition~\ref{lemma:paving}.
    We also indicate the occurrence of the event $\mathcal{S}(y,L_k)$ as in Lemma~\ref{lemma:new_lemma_3.2}.
  }
  \label{f:six_balls}
\end{figure}

\noindent
By $p_k$ we denote the probability to observe a separation event at scale $k$, i.e., set
\begin{equation}
  \label{eq:pk}
  p_k := \sup_{x \in V}\mathbb{P}[\S(x, L_k)].
\end{equation}
In the above definition we use the supremum over $x \in V$, as we are not necessarily assuming that $G$ is transitive or that $\mathbb{P}$ is translation invariant.

A fundamental step in the proof of Theorem~\ref{thm:p_c<1_dependent} is to show that for values of $p$ close enough to one, the probabilities $p_k$ decay to zero very fast as $k$ increases.

In this section we assume that $G$ satisfies the extra Condition~\ref{cond:joao} below and prove Theorem~\ref{thm:p_c<1_dependent}.
This condition will later be proved to hold true for roughly transitive graphs satisfying $\mathcal{V}(c_u, d_u)$ and $\mathcal{I}(c_i, d_i)$ with $d_i > 1$, see the \nameref{lemma:joao}~\ref{lemma:joao}.

Roughly speaking Condition~\ref{cond:joao} states that if $\S(o,L_{k+1})$ occurs for some $k+1$, then we can find various separation events at the smaller scale $k$.

\begin{Condition}
  \label{cond:joao}
  We say that a given graph satisfies Condition~\ref{cond:joao} for some integer $\gamma \geq 3$ and $L_k$ as in \eqref{eq:scales_new} if the collection of events $\big( \S(x, L_k) \big)_{x \in V, L_k \geq 1}$ is cascading in the sense of Definition~\ref{d:joao}.
\end{Condition}

Before proceeding, let us briefly recall how a statement similar to the above was derived in \cite{2014arXiv1409.5923T} and the main challenges that we face in our context.
In that paper, a stronger hypothesis on the underlying graph was assumed, namely that $G$ verifies certain \emph{local isoperimetric inequalities}.

In the current work, we only make use of the standard isoperimetric inequality \eqref{eq:isoperimetric}, together with the hypothesis that $G$ is roughly transitive and has polynomial growth (see also Remark~2.3 (c) in \cite{2014arXiv1409.5923T}).
In particular, the next lemma will guarantee that Condition~\ref{cond:joao} is implied by $\mathcal{V}(c_u, d_u)$ and $\mathcal{I}(c_i, d_i)$, with $d_i > 1$.
This will be an important novelty of this work and we will postpone its proof to Section~\ref{s:proof_joao}.

\begin{Lemma}[Cascading Lemma]
  \label{lemma:joao}
  Let $G$ be $\uc{c:rough_trans}$-roughly transitive, satisfying the conditions $\mathcal{V}(c_u, d_u)$, \eqref{eq:isoperimetric} with $d_i > 1$, and let $\gamma$ be as in \eqref{eq:condition_gamma}, then $G$ satisfies Condition~\ref{cond:joao}.
  Moreover, the constant $\uc{c:k_def_joao}$ appearing in Definition~\ref{d:joao} depends only on $\uc{c:rough_trans}, J, c_i, d_i, c_u$ and $d_u$.
\end{Lemma}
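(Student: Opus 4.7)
The plan is to verify Condition~\ref{cond:joao} by contradiction: assuming $\S(x, L_{k+1})$ occurs but the required $J$ points $y_1, \ldots, y_J \in K$ with $d(y_j, y_l) \geq 9 L_k^2$ and $\S(y_j, L_k)$ occurring at each cannot be found, one derives a contradiction. From $\S(x, L_{k+1})$ we start with two sets $A^0, A^1 \subseteq B(x, 3L_{k+1})$ of diameter at least $L_{k+1}/100$ that are not joined by an open path inside $B(x, 3L_{k+1}^2)$. The overall goal is to produce such an open path using the hypothesized absence of smaller-scale separation events, thereby contradicting $\S(x, L_{k+1})$.

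First, following Lemma~\ref{lemma:new_lemma_3.2} announced in the introduction, I would reformulate the search for $\S(y, L_k)$ in terms of open connectivity: if $\S(y, L_k)$ does \emph{not} occur, then any two ``large'' subsets of $B(y, 3L_k)$ must be joined by an open path inside $B(y, 3L_k^2)$. Hence the hypothesized failure to place $J$ separation events at scale $L_k$ within $K$ makes open $L_k$-scale connections abundantly available throughout $B(x, 2L_{k+1}^2)$, and the remaining task is to splice such local connections into a single open path from $A^0$ to $A^1$ inside $B(x, 3L_{k+1}^2)$.

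Second, to supply the geometric backbone along which such splicing can be carried out, I would apply the isoperimetric inequality $\mathcal{I}(c_i, d_i)$ with $d_i > 1$ via a Menger / max-flow argument: this produces many vertex-disjoint paths in $G$ (not necessarily open) starting from $A^0$ and reaching $\partial B(x, L_{k+1}^2)$, and similarly from $A^1$, whose number grows like a positive power of $L_{k+1}$. Concatenating pairs of such paths at the outer boundary yields many disjoint routes in $G$ joining $A^0$ and $A^1$ inside $B(x, 3L_{k+1}^2)$, along each of which one can select intermediate $L_k$-scale balls mutually separated by at least $9 L_k^2$.

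Third, and here lies the main work, I would combine the previous two ingredients through the binary-tree embedding described in the introduction's sketch. Taking $B(x, 3L_{k+1})$ together with two of the routes from the previous step as the root of a tree with its two outgoing edges, rough transitivity (via a rough isometry $\phi: G \to G$ sending $x$ to any desired location together with its almost-inverse from \eqref{e:rough_inverse}) lets us transplant this branching configuration around each new descendant; recursing produces a rooted binary tree embedded into $B(x, 3L_{k+1}^2)$. The number of leaves grows exponentially in the depth, while the ambient ball has volume at most $c_u (2L_{k+1}^2)^{d_u}$, so above a certain depth the embedding must fail, yielding the contradiction. The main obstacle is controlling the geometric distortion of this construction: because composition of rough isometries worsens the constant $\uc{c:rough_trans}$ (cf.\ Remark~\ref{r:advantages}~d)), the recursion depth, the separation between branching balls, and the covering mesh $L_k$ must be chosen carefully so that every branching ball at scale $L_k$ is separated from the others by at least $9 L_k^2$ and sits inside $B(x, 3L_{k+1}^2)$. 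The condition $\gamma (d_i - 1)/d_i > 2 d_u$ from \eqref{eq:condition_gamma} is precisely what is needed to make the binary tree's leaf count exceed the polynomial volume bound at scale $L_{k+1}^2$, and the resulting constant $\uc{c:k_def_joao}$ then depends only on $\uc{c:rough_trans}, J, c_i, d_i, c_u, d_u$, as required.
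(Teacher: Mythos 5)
Your high-level plan (reduce to finding paths, deploy the isoperimetric inequality and rough transitivity, embed a binary tree to violate polynomial growth) is the right circle of ideas, but the proposal has a genuine gap at the exact point where the paper does the real work.

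In your Step 2 you assert that the $N(A)$ disjoint paths from $A^0$ to $\partial_i B(x, L_{k+1}^2)$ and from $A^1$ to $\partial_i B(x, L_{k+1}^2)$ can be ``concatenated at the outer boundary'' to produce many disjoint routes joining $A^0$ to $A^1$. This does not follow: the endpoints of the $A^0$-paths on the sphere bear no relation to the endpoints of the $A^1$-paths, and any attempt to join them by routing along the sphere destroys disjointness. The isoperimetric inequality \eqref{eq:isoperimetric} only gives escape routes, not $A^0$-to-$A^1$ connections, and the paper is explicit that this is precisely the distinction between \eqref{eq:isoperimetric} and the stronger \emph{local isoperimetric inequality} of \cite{2014arXiv1409.5923T}. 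If your Step~2 worked, the tree embedding would be unnecessary.

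Because of this, your Step 3 uses the wrong building block. In the paper's construction each tree node has two outgoing edges consisting of one escape path from $A^0_l(\omega)$ and one escape path from $A^1_l(\omega)$ (obtained from Lemma~\ref{lemma:N''}), not two $A^0$-to-$A^1$ routes. More importantly, you omit the engine of the contradiction: the paper does not derive a contradiction merely because ``the embedding must fail'' against the volume bound. The argument negates the purely geometric Condition~\ref{cond:avoid_balls}, and Lemma~\ref{lemma:loops} shows that if any two leaves $x_l(\omega)$, $x_l(\omega')$ coincide, one can trace the tree up to the common ancestor and back down to produce a path from $A^0_l$ to $A^1_l$ inside $B(x_l, L_{k_l+1}^2)$ that avoids $\bigcup_j B(y^l_j, 12 L_{k_l}^2)$ --- contradicting \eqref{e:Hl_splits_As}. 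Only then is leaf-distinctness available, so that $2^{n_l}$ distinct points inside a polynomially-sized ball yield the contradiction. You also misplace where the hypothesis $\gamma(d_i-1)/d_i > 2d_u$ is used: it is needed in Lemma~\ref{lemma:N''} to ensure the number of disjoint escape paths ($\sim L_k^{\gamma (d_i-1)/d_i}$) dominates the total volume of the $O(J\log^2 L_k)$ forbidden balls of radius $O(L_k^2)$, not for the exponential-versus-polynomial comparison at the end, which holds for any $\gamma$. Finally, the $9 L_k^2$ separation is a requirement on the $y_j$'s in Condition~\ref{cond:joao}, produced through Lemma~\ref{lemma:joao_2} and Lemma~\ref{lemma:new_lemma_3.2}; it is not a constraint imposed on the tree nodes themselves.
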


We will now give a proof of Theorem~\ref{thm:p_c<1_dependent}, assuming the validity of the \nameref{lemma:joao} above, which will be proved in Section~\ref{s:proof_joao}.

\begin{Remark}\label{remark:alpha_star}
In Theorem~\ref{thm:p_c<1_dependent}, we assumed that $\alpha > \alpha_{\ast}$, which still had to be defined.
We can now introduce
\begin{equation}
  \label{e:alpha_star}
  \alpha_{\ast} := \left ( \frac{2 d_u d_i}{d_i-1}\right ) d_u - d_l/2,
\end{equation}
Note that for $\alpha > \alpha_{\ast}$
\begin{display}
  we can find $\gamma$ as in \eqref{eq:condition_gamma} and such that $\alpha > \gamma d_u - d_l/2$ as in Lemma~\ref{l:cascade_decays}.
\end{display}
\end{Remark}

\bigskip

Recall the definition of $p_k$ from \eqref{eq:pk}.
We first show the decay of $p_k$ for large enough $p$ in the following lemma.

\nc{c:k_pk_decay}
\begin{Lemma}
  \label{lemma:pk_decay}
  Suppose that $G$ is a roughly transitive graph satisfying $\mathcal{V}(c_u, d_u)$, $\mathcal{L}(c_l, d_l)$ and $\mathcal{I}(c_i, d_i)$ with $d_i > 1$, $\gamma$ satisfies \eqref{eq:condition_gamma} and fix $\mathbb{P}$ fulfilling $\mathcal{D}(\alpha, c_\alpha)$.
  Then, given any $\beta > 0$, there exists $p_* = p_*(\beta, \gamma, c_i, d_i, c_l, d_l, c_u, d_u, c_\alpha, \alpha, \uc{c:rough_trans}) < 1$ such that whenever $p:=\inf_{x\in V}Y_x > p_*$ we have
  \begin{equation}
    p_k \leq L_k^{-\beta}, \text{ for every $k \geq 1$}.
\end{equation}
\end{Lemma}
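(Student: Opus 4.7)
The plan is to apply the bootstrapping result of Lemma~\ref{l:cascade_decays} to the separation events $\S(x, L_k)$. Two ingredients are required: that this family is cascading in the sense of Definition~\ref{d:joao}, and a seed estimate $p_{k_o} \leq L_{k_o}^{-\beta}$ at some large scale $k_o$. Once both are in place, the recursion delivers the claimed polynomial decay for every $k \geq k_o$, and a crude comparison will take care of the finitely many smaller scales.

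First I would invoke the \nameref{lemma:joao}~\ref{lemma:joao} (to be proved in Section~\ref{s:proof_joao}) to conclude that $\{\S(x, L_k)\}_{x \in V,\, k \geq 1}$ is cascading. Next I would verify the numerical hypothesis $\alpha > 2 \gamma d_u - d_l$ required by Lemma~\ref{l:cascade_decays}: this is exactly the unnumbered display at the end of Remark~\ref{remark:alpha_star}, which says that since $\alpha > \alpha_\ast$ we may fix $\gamma$ satisfying simultaneously \eqref{eq:condition_gamma} and $\alpha > 2\gamma d_u - d_l$. Taking the scales $L_k$ as in \eqref{eq:scales_new} and applying Lemma~\ref{l:cascade_decays} to our cascading family with the given $\beta$ then produces a threshold $\uc{c:cascade_decays}$ with the property that, once $p_{k_o} \leq L_{k_o}^{-\beta}$ at some $k_o \geq \uc{c:cascade_decays}$, the bound $p_k \leq L_k^{-\beta}$ propagates to all $k \geq k_o$.

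Second I would seed the induction at $k_o := \uc{c:cascade_decays}$ by a trivial union bound. The key observation is that if every vertex of $B(x, 3L_{k_o}^2)$ is open, then $\S(x, L_{k_o})$ cannot occur: for any $y$ in this ball, a geodesic from $y$ to $x$ stays inside $B(x, 3L_{k_o}^2)$ (its intermediate vertices lie at distance strictly less than $d(x,y)$ from $x$), so the induced subgraph on $B(x, 3L_{k_o}^2)$ is connected through $x$, and any two nonempty subsets of the smaller ball $B(x, 3L_{k_o})$ can be joined by an open path inside $B(x, 3L_{k_o}^2)$. Using $\mathcal{V}(c_u, d_u)$ and a union bound we obtain
\[
\mathbb{P}[\S(x, L_{k_o})] \;\leq\; |B(x, 3L_{k_o}^2)|\,\sup_{y \in V} \mathbb{P}[Y_y = 0] \;\leq\; c_u\, 3^{d_u}\, L_{k_o}^{2 d_u}\,(1 - p),
\]
where $p$ stands for the uniform lower bound on the marginals $\mathbb{P}[Y_y = 1]$. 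Choosing
\[
p_* \;:=\; 1 \;-\; (c_u\, 3^{d_u})^{-1}\, L_{k_o}^{-2 d_u - \beta}
\]
forces $\sup_{x}\mathbb{P}[\S(x, L_{k_o})] \leq L_{k_o}^{-\beta}$ for every $p > p_*$, which is even stronger than needed and seeds the recursion.

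Finally, for the finitely many initial scales $k = 1, \dots, k_o - 1$, the same union bound combined with the monotonicity $L_k \leq L_{k_o}$ yields $p_k \leq c_u\, 3^{d_u} L_k^{2 d_u}(1 - p) \leq L_{k_o}^{-\beta} \leq L_k^{-\beta}$ under the same choice of $p_*$, so $p_k \leq L_k^{-\beta}$ holds for every $k \geq 1$. All the genuine geometric difficulty has been outsourced to the \nameref{lemma:joao}; granting that, the present lemma is a mechanical assembly of Lemma~\ref{l:cascade_decays} with a union bound, and the only mild subtlety is choosing a single $p_*$ that handles both the seed scale $k_o$ and the finitely many smaller scales at once, which the monotonicity of $L_k$ furnishes automatically.
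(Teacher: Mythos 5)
Your proof is correct and takes essentially the same approach as the paper: invoke the Cascading Lemma to establish the cascading property, apply Lemma~\ref{l:cascade_decays} for the bootstrap, and seed the recursion (plus handle the finitely many smaller scales) by pushing $p_*$ close to $1$ via a union bound over the ball. The paper states the seeding step more tersely (``since balls will likely be completely open'' and ``by possibly increasing $p_*$''), whereas you spell out the union bound and the monotonicity argument for $k < k_o$ explicitly, but the underlying argument is identical.
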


\begin{proof}
Since $\gamma$ satisfies \eqref{eq:condition_gamma}, we can use the \nameref{lemma:joao} to conclude that the events $\mathcal{S}(x, L)$ are cascading.
Hence whenever $\beta \geq 2\alpha$, Lemma~\ref{l:cascade_decays} implies that if for some large value $k_0$ we have $p_{k_0} \leq L_{k_0}^{-\beta}$, then this relation holds for all $k \geq k_0$.

We now observe that as the percolation parameter $p$ converges to one, then the probability of $\S(o, L_k)$ (for some fixed $k \geq \uc{c:k_pk_decay}$) converges to zero, since balls will likely be completely open.
This implies that, if $\inf_{x \in V} \mathbb{P}[Y_x = 1] \geq p_*$ as in the statement of the theorem, we will have $p_k \leq L_k^{-\beta}$ for all $k \geq \uc{c:k_pk_decay}$.
By possibly increasing $p_*$ we can make sure that the above holds for all $k \geq 1$ and the value of $\beta \geq 2\alpha$ chosen above.
Note however that taking $\beta$ larger can only make the statement harder to prove, finishing the proof for every $\beta > 0$.
\end{proof}

The statement of Theorem~\ref{thm:p_c<1_dependent} now follows from Lemma~\ref{lemma:pk_decay} above and the following result, whose proof is deferred to Section \ref{s:lego}.
\begin{Lemma}
  \label{lemma:lego}
Suppose that $G$ satisfies \eqref{eq:volume_upper_bound}.
Fix an arbitrary value $\theta > 0$, an integer $\gamma \geq 3$ satisfying \eqref{eq:condition_gamma} and 
$\beta > \gamma(1+\theta)$.
If $p_k \leq L_k^{-\beta}$ for all $k\geq 1$, then
  \begin{equation}
    \mathbb{P} \big[ \text{there is a unique infinite connected open cluster $\mathcal{C}_\infty$} \big] = 1
  \end{equation}
  and moreover, for every fixed $x\in V$ and $L$ large enough we have
  \begin{equation}
 \P[L < |\mathcal{C}_x| < \infty] \leq L^{-\theta},
  \end{equation}
  where $\mathcal{C}_x$ stands for the open connected component containing $x$.
\end{Lemma}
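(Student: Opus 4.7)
The plan is to translate the hypothesis $p_k \leq L_k^{-\beta}$ into two structural statements---almost-sure uniqueness of the infinite open cluster, and a polynomial tail bound on finite cluster sizes---from which existence will then follow by combining the tail bound with the high density of open vertices that the hypothesis forces.

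For uniqueness, I would argue by contradiction. If two distinct infinite open clusters $\mathcal{C}^1, \mathcal{C}^2$ coexist, then for every $L$ large enough each $\mathcal{C}^i$ meets $B(o, 3L)$ and extends outside it, so one can pick connected open subsets $A^i \subseteq \mathcal{C}^i \cap B(o, 3L)$ of diameter at least $L/100$. Since $A^1$ and $A^2$ lie in different open clusters they cannot be joined by any open path, witnessing $\S(o, L)$. Applying this at $L = L_k$ and using $\sum_k L_k^{-\beta} < \infty$ (immediate from the super-exponential growth of $L_k$), Borel--Cantelli forces $\S(o, L_k)$ to occur only finitely often almost surely, a contradiction.

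For the tail bound, fix $L$ large and set $\ell := (L/c_u)^{1/d_u}$, so that $|\mathcal{C}_x| > L$ implies $\diam(\mathcal{C}_x) \geq \ell$. Let $k$ be the smallest integer with $L_k \geq 100\,\ell$, so that $L_k$ is of the same order as $L^{1/d_u}$ (up to at most a factor $L_k^{\gamma-1}$). On the event $\{L < |\mathcal{C}_x| < \infty\}$ the cluster $\mathcal{C}_x$ yields a connected open subset $A^0 \subseteq B(x, 3L_k)$ of diameter at least $L_k/100$; since $\mathcal{C}_x$ is finite, any set $A^1 \subseteq B(x, 3L_k) \setminus \mathcal{C}_x$ consisting of open vertices with pairwise distance at least $L_k/100$ is automatically not joined to $A^0$ by any open path. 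The existence of such an $A^1$ has probability close to one---a consequence of the high density of open vertices implicit in $p_k$ being small---so the event witnesses $\S(y, L_k)$ for some $y$ within $B(x, 3L_k^2)$. Union-bounding over $y$ gives
\[
  \P[L < |\mathcal{C}_x| < \infty] \;\leq\; c\, L_k^{2 d_u}\, p_k \;+\; o(1) \;\leq\; c'\, L_k^{2 d_u - \beta} \;\leq\; L^{-\theta},
\]
where the last inequality uses $\beta > \gamma(2 + \theta) > d_u(2 + \theta)$ (the second step since $\gamma > 2 d_u > d_u$ by condition~\eqref{eq:condition_gamma}) together with $L \asymp L_k^{d_u}$.

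For existence, the polynomial decay of $p_k$ forces $\sup_x \P[Y_x = 0]$ to be small, so there is a fixed scale $L_0$ (depending on $\theta$) with $\P[|\mathcal{C}_o| \geq L_0] \geq 1/2$ and $L_0^{-\theta} \leq 1/4$; the tail bound then gives $\P[|\mathcal{C}_o| = \infty] \geq 1/4 > 0$, and a standard $0$--$1$ argument exploiting the decoupling inequality $\mathcal{D}(\alpha, c_\alpha)$ together with uniqueness upgrades this to almost-sure existence of the unique infinite cluster. The main technical obstacle is the tail bound, and specifically the production of the second set $A^1$: the large finite cluster supplies $A^0$ directly, but $A^1$ must come from the surrounding annulus, where its existence relies on high percolation density; balancing the polynomial union-bound factor $L_k^{2 d_u}$ against the decay $L_k^{-\beta}$ is precisely what dictates the quantitative hypothesis $\beta > \gamma(2 + \theta)$.
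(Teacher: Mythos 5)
Your approach is genuinely different from the paper's, and the difference is structural. The paper proves the lemma by first \emph{constructing} the infinite cluster: it fixes a geodesic ray $\sigma$ with $d(\sigma(i),\sigma(j))=|i-j|$, defines anchor points $x_{k,i}=\sigma(iL_k^2/10)$, introduces the good event $\mathcal{G}_0=\bigcap_{k\geq k_0}\bigcap_i \S(x_{k,i},L_k)^c$ with $\P(\mathcal{G}_0^c)\leq c L_{k_0}^{-\gamma\chi}$, and shows (Lemma~\ref{lemma:(4.16)}) that on $\mathcal{G}_0$ the non-occurrence of separation events lets one chain open paths along the ray into an infinite open component near $\sigma(0)$. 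Uniqueness and the tail bound then follow on $\mathcal{G}_0$ (Claim~\ref{claim:(4.13)}): any other large cluster would, together with the constructed infinite one, furnish a pair $A^0,A^1$ witnessing a separation event and contradict $\mathcal{G}_0$. You instead try to prove uniqueness (Borel--Cantelli), the tail bound (a union bound over separation events), and existence (tail bound plus a $0$--$1$ argument) directly, without the ray construction.

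The Borel--Cantelli uniqueness argument is sound in spirit and is essentially the same observation the paper makes inside Claim~\ref{claim:(4.13)}. But there are two real gaps in the remainder. For the tail bound, you need to produce the second set $A^1$ of the separation event, and the set you offer---open vertices at pairwise distance $\geq L_k/100$---does not qualify: the sets $A^0,A^1$ in $\S$ are required to be \emph{connected} of diameter $\geq L_k/100$ (this is made explicit in Lemma~\ref{lemma:new_lemma_3.2}), not merely of large diameter as a discrete set. Moreover, exhibiting a connected $A^1$ disjoint from $\mathcal{C}_x$ inside $B(y,3L_k)$ for a suitable $y$ while keeping a piece of $\mathcal{C}_x$ of comparable diameter in the same ball is a genuine geometric claim, not obvious from $|\mathcal{C}_x|<\infty$ alone; the paper sidesteps it entirely because the constructed infinite cluster from Lemma~\ref{lemma:(4.16)} automatically plays the role of $A^1$. (There is also a sign error in the scale choice: you need $L_k\leq 100\ell$, not $L_k\geq 100\ell$, for $\mathcal{C}_x$ to contain a piece of diameter $\geq L_k/100$, so you should be working at scale $k-1$.)

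For existence, you appeal to a ``standard $0$--$1$ argument exploiting $\mathcal{D}(\alpha,c_\alpha)$'' to upgrade $\P[|\mathcal{C}_o|=\infty]>0$ to almost-sure existence, but the one-sided decoupling in Definition~\ref{def:decoupling} does not give a $0$--$1$ law for tail events of a general dependent measure (one would need two-sided control, or invariance/ergodicity, neither of which is assumed). Your detour through $\sup_x\P[Y_x=0]$ being small also changes the nature of the statement: Lemma~\ref{lemma:lego} is a pure implication from the hypothesis $p_k\leq L_k^{-\beta}$, and extracting a quantitative density bound from it would require a separate argument. The paper avoids all of this: the infinite cluster exists on $\mathcal{G}_0$, which has probability $\geq 1-cL_{k_0}^{-\gamma\chi}$ for every $k_0$, hence probability one after letting $k_0\to\infty$---no $0$--$1$ law needed. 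In short, the ray construction is not cosmetic; it is what furnishes both the existence of $\mathcal{C}_\infty$ and the partner set $A^1$ that your more direct approach is missing.
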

Note the similarity between the above result and Lemma~4.1 of \cite{2014arXiv1409.5923T}.
It is worth mentioning that despite this similarity, a new proof of the above lemma is required since the definitions of $\S(o, L_k)$ and consequently of $p_k$ are different.

\begin{proof}[Proof of Theorem~\ref{thm:p_c<1_dependent}]
  Choose $\beta >\max\{2\gamma d_u-d_l,\gamma(1+\theta)\}$ for some arbitrary value $\theta>0$ and suppose that $G$ is a roughly transitive graph satisfying $\mathcal{V}(c_u, d_u)$ and $\mathcal{I}(c_i, d_i)$ with $d_i > 1$.
  Then we are in the condition to apply Lemma~\ref{lemma:pk_decay}, obtaining that $ p_k\leq L_k^{-\beta}$ for all $k\geq 1$.
  Finally the result follows from Lemma~\ref{lemma:lego}.
\end{proof}

\section{Proof of the \nameref{lemma:joao}}
\label{s:proof_joao}

As we mentioned above, the most innovative step in proving Corollary~\ref{thm:p_c<1} was the intermediate \nameref{lemma:joao}, that we now prove.
The argument is split into three main steps that can be informally described as follows.

\emph{Step 1.} Suppose we have two sets $A^0, A^1$ which are separated as in the definition of $ \S(o,L_{k+1})$.
We first show that paths connecting $A^0$ to $A^1$ necessarily cross a separation event at the smaller scale $L_k$.
This is explained in Subsection~\ref{ss:path}.

\emph{Step 2.} Therefore our task is now reduced to showing that there are several paths connecting these two sets inside $ B(o, 3L_{k+1}^2)$.
This is not an immediate consequence of the isoperimetric inequality \eqref{eq:isoperimetric}.
However, this inequality shows that there must be several disjoint paths connecting $A^0$ to $\partial_i B(o,3L_{k+1}^2)$ (same for $A^1$), see Subsection~\ref{ss:disjoint_paths}.

\emph{Step 3.} Finally, we will show that indeed there exist several paths connecting $A^0$ to $A^1$ and this is done by contradiction.
More precisely, assuming that there are only few paths connecting these sets, we have a type of ``local bottleneck'' in our graph.
This, together with rough transitivity will allow us to replicate this local bottleneck in different parts of the graph and they act as branching points for paths of the graph.
Therefore, we are able (under this contradiction assumption) to embed a chunk of a binary tree inside $G$, see Figure~\ref{f:crab_party}.
This will contradict the polynomial growth that we assumed in first place, concluding the proof of the \nameref{lemma:joao}.
This final argument can be found in Subsection~\ref{ss:embed}.

\subsection{Using paths to find separation events}
\label{ss:path}

The first step in the proof of the \nameref{lemma:joao} is to reduce the task of finding separation events at the finer scale $k$ to simply finding paths between the separated sets $A^0$ and $A^1$ at scale $L_{k+1}$.

First, we observe that, given the inductive definition of $L_k$ in \eqref{eq:inductive_def_L_k}, for all $k \geq 0$ we have
\begin{equation}
  L_k \leq \frac{L_{k+1}}{2000}.
\end{equation}
The next lemma helps us obtaining separation events from paths connecting $A^0$ to $A^1$.

A path connecting $A^0$ to $A^1$ is a (not necessarily open) sequence of adjacent edges that goes from a vertex of $A^0$ to a vertex in $A^1$.
We say that such a path is open if all the vertices visited by the path are open except for its endpoints.

\begin{Lemma}\label{lemma:new_lemma_3.2}
For some $x \in V$ and any $k\geq 0$, consider a set $K \subseteq B(x, 2L_{k+1}^2)$ such that $B(K, L_k) $ covers $ B(x, L_{k+1}^2)$ and a pair of sets $A^0, A^1$ as in the definition of the event $ \S(x, L_{k+1})$.
  In other words, assume that
  \begin{itemize}
  \item[(a)] $A^0$ and $A^1$ are connected and contained in $ B(x, 3 L_{k+1})$,
  \item[(b)] their diameters are at least $ L_{k+1}/100$, and
  \item[(c)] no open path inside $ B(x, 3L_{k+1}^2)$ connects $A^0$ and $A^1$.
  \end{itemize}
  Then, for every path $\sigma$ in $ B(x, L_{k+1}^2)$ connecting $A^0$ to $A^1$ there exists $y \in K$ such that
  \begin{itemize}
  \item[(i)] $\sigma$ intersects $ B(y, L_k)$ and
  \item[(ii)] the event $\S(y,L_k)$ holds.
  \end{itemize}
  See also Figure~\ref{f:six_balls}.
\end{Lemma}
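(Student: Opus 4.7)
My plan is to locate a ball $B(y, L_k)$ along $\sigma$ at a place where the local geometry forces a separation event at scale $L_k$. The natural sets witnessing $\S(y, L_k)$ will be drawn from the ``open reachability hulls'' of $A^0$ and $A^1$.

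I would first extend $\sigma$: using the fact that each $A^i$ is connected with diameter at least $L_{k+1}/100 \gg L_k$, I prepend a simple path $\tau^0 \subseteq A^0$ of diameter at least $L_{k+1}/200$ and append a similar $\tau^1 \subseteq A^1$, obtaining an extended path $\tilde\sigma \subseteq B(x, L_{k+1}^2)$ that starts deep inside $A^0$ and ends deep inside $A^1$. Next I introduce the hulls
\[
H^i := A^i \cup \{v \in B(x, 3L_{k+1}^2) : v \text{ is joined to } A^i \text{ by an open path in } B(x, 3L_{k+1}^2)\}.
\]
The hypothesis defining $\S(x, L_{k+1})$ yields $H^0 \cap H^1 = \emptyset$ and, more importantly, no open path in $B(x, 3L_{k+1}^2)$ can connect $H^0$ to $H^1$: any such path would splice with the open paths defining the hulls into a forbidden $A^0$-to-$A^1$ connection. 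A key consequence I will use is that any open path in $B(x, 3L_{k+1}^2)$ starting at a vertex of $H^0$ stays entirely inside $H^0$.

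The key step is to walk $\tilde\sigma$ from its $A^0$ end and identify the last vertex $z$ of $\tilde\sigma$ lying in $H^0$. Since $\tilde\sigma$ ends in $H^1$ disjoint from $H^0$ and $\tau^0 \subseteq A^0 \subseteq H^0$, this vertex $z$ lies on $\sigma$ proper. The covering property $B(K, L_k) \supseteq B(x, L_{k+1}^2)$ then produces $y \in K$ with $z \in B(y, L_k)$, establishing part (i). I set $A^0_y := H^0 \cap B(y, 3L_k)$ and let $A^1_y$ be a large-diameter subset of $B(y, 3L_k) \setminus H^0$ extracted from the forward portion of $\tilde\sigma$ past $z$. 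Separation of $A^0_y$ from $A^1_y$ by open paths in $B(y, 3L_k^2)$ is automatic from the key consequence above: any open path from $A^0_y$ remains in $H^0$ and thus cannot terminate in $A^1_y \subseteq V \setminus H^0$. In particular $d(A^0_y, A^1_y) > 1$.

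The main obstacle is to ensure $\diam A^i_y \geq L_k/100$ for both $i$. On the $A^0$ side, the sub-segment of $\tilde\sigma$ of length $L_k/50$ immediately preceding $z$ lies entirely in $H^0$; by choosing $\tau^0, \tau^1$ and the nearby part of $\sigma$ to be geodesic (or near-geodesic) sub-paths, this segment contributes a subset of $A^0_y$ of graph-diameter $\geq L_k/100$. The $A^1$-side is more delicate because the vertex after $z$ on $\tilde\sigma$ lies outside $H^0$ but need not lie in $H^1$, and the forward sub-segment might exit $B(y, 3L_k)$ too quickly. To resolve this I would split into two cases: either a short forward sub-segment of $\tilde\sigma$ past $z$ already realises $L_k/100$ graph-diameter as a subset of $V \setminus H^0$ (the generic situation), or $\tilde\sigma$ spends a long stretch outside $H^0$ before first entering $H^1$. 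In the latter case, I repeat the construction symmetrically at the first $H^1$-vertex $z'$ after $z$, obtaining a ball $B(y', 3L_k)$ with $A^1_{y'} := H^1 \cap B(y', 3L_k)$ of large diameter (by geodesic segments inside $A^1$) and $A^0_{y'}$ taken from a geodesic sub-segment of $\tilde\sigma$ lying in $B(y', 3L_k) \setminus H^1$. In either case, a local separation event is realised, completing the proof.
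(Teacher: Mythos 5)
Your high-level strategy — introduce the reachability hulls $H^0,H^1$, locate a ``break point'' $z$ of $\sigma$ relative to $H^0$, use the covering $K$ to select $y$, and read off a separation event at scale $L_k$ — is the right one and matches the spirit of the argument the paper points to. However, there is a genuine gap in the diameter-control step, and a secondary issue with the $d(A^0_y,A^1_y)>1$ requirement.

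The central problem is the claim that ``the sub-segment of $\tilde\sigma$ of length $L_k/50$ immediately preceding $z$ lies entirely in $H^0$.'' With your choice of $z$ as the \emph{last} vertex of $\tilde\sigma$ in $H^0$, this is false in general: the deterministic path $\tilde\sigma$ can enter and leave $H^0$ many times before $z$ (any closed vertex of $\sigma$ not in $A^0$ is outside $H^0$), so the vertices immediately preceding $z$ need not belong to $H^0$. If you instead chose $z$ to be the first vertex outside $H^0$, the segment before $z$ would indeed lie in $H^0$, but then the mirror problem appears after $z$ — the forward segment is no longer guaranteed to avoid $H^0$. With either choice, exactly one side is controlled. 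The fix is to stop relying on $\tilde\sigma$ alone for the diameter bound: since $z\in H^0$, either $z\in A^0$ or there is an open path $\pi'$ from $z$ to $A^0$, and the connected set $A^0\cup\pi'\subseteq H^0$ contains $z$ and has diameter $\geq L_{k+1}/100\gg L_k$, so it yields a connected subset of $H^0\cap B(y,3L_k)$ of diameter $\geq 2L_k$ by walking from $z$ to the boundary of $B(y,3L_k)$. The symmetric argument for the $A^1$ side should use the connected set $A^1\cup\sigma[z+1,a_1]$, which is disjoint from $H^0$ because $z$ is the last $H^0$-vertex; your proposal, which extracts $A^1_y$ purely from ``the forward portion of $\tilde\sigma$'' and then does a case analysis / repeats the construction at the first $H^1$-vertex $z'$, does not control the diameter (a path of length $L_k/50$ need not have diameter $L_k/50$, and the construction at $z'$ inherits the same asymmetry as at $z$).

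A secondary gap: you assert ``In particular $d(A^0_y,A^1_y)>1$'' from the open-path separation, but $A^0_y\subseteq H^0$ and $A^1_y\subseteq V\setminus H^0$ can be adjacent (the external boundary of $H^0$ consists of closed vertices, which are not in $H^0$ but can sit next to it), and your ``key consequence'' fails for a length-one path whose endpoint is closed. This needs a small additional argument — for instance, excising $\partial_e H^0$ from $A^1_y$, noting that those vertices are all closed so the separation argument survives, and then re-checking the diameter.
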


\begin{proof}
  The proof of this lemma essentially follows the steps of the proof of Lemma~3.2 in \cite{2014arXiv1409.5923T}.
  Therefore, we will not repeat the entire argument here.
  Instead, we just indicate what substitutions should be done to make that proof match exactly the context of the present article.
  First, replace each occurrence of $B(y, j L_k/6)$, for $j = 1, 2, 3$, by $B(y, j L_k)$.
  Then replace the balls $B(y, j L_k/6)$, for $j = 4, 5$ and $6$ with $B(y, (j-3) L_k^2)$.
\end{proof}

The above lemma will allow us to reduce Condition~\ref{cond:joao} to the following simpler condition, which only concerns the geometry of $G$, not the realization of the percolation process.

\nc{c:cond_2}
\begin{Condition}
  \label{cond:avoid_balls}
  We say that a graph $G$ satisfies Condition~\ref{cond:avoid_balls} if for any $J \geq 1$ there exists a constant $\uc{c:cond_2} = \uc{c:cond_2}(G, J, \gamma)$ for which the following holds.
  Given $x \in V$, a scale $k \geq \uc{c:cond_2}$, connected sets $A^0, A^1 \subseteq B(x, 3 L_{k+1})$ with diameters at least $L_{k+1}/100$ and any collection $y_1 \dots, y_{J-1} \in B(x, 2 L_{k+1}^2)$, there exists a path $\sigma$ contained in $B(x, L_{k + 1}^2)$, connecting $A^0$ with $A^1$ while avoiding the set of balls $\mcup\nolimits_{j \leq J-1} B(y_j, 12 L_k^2)$.
\end{Condition}

\begin{Lemma}
  \label{lemma:joao_2}
  Condition~\ref{cond:avoid_balls} implies Condition~\ref{cond:joao}.
\end{Lemma}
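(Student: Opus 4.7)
The plan is to iterate Lemma~\ref{lemma:new_lemma_3.2}, producing the required separation events one at a time by repeatedly invoking Condition~\ref{cond:avoid_balls} to forbid the balls around the points already constructed. Fix $J\geq 1$ and choose $\uc{c:k_def_joao}(J)$ large enough that $\uc{c:k_def_joao}(J)\geq \uc{c:Lk_small}$ and $\uc{c:k_def_joao}(J)\geq \uc{c:cond_2}(G,j,\gamma)$ for every $j\leq J$. Let $k\geq \uc{c:k_def_joao}(J)$, assume $\S(x,L_{k+1})$ occurs, producing connected sets $A^0,A^1\subseteq B(x,3L_{k+1})$ with diameters at least $L_{k+1}/100$ and with no open path in $B(x,3L_{k+1}^2)$ joining them, and let $K\subseteq B(x,2L_{k+1}^2)$ be any set with $B(x,L_{k+1}^2)\subseteq B(K,L_k)$, as in Definition~\ref{d:joao}.

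I will construct points $y_1,\ldots,y_J\in K$ such that $\S(y_j,L_k)$ holds for every $j$ and $d(y_i,y_j)\geq 9L_k^2$ for $i\neq j$, by induction on $j$. Suppose $y_1,\ldots,y_{j-1}$ have already been selected. Applying Condition~\ref{cond:avoid_balls} with parameter $j$ to the pair $(A^0,A^1)$ and to the points $y_1,\ldots,y_{j-1}\in B(x,2L_{k+1}^2)$ provides a path $\sigma_j\subseteq B(x,L_{k+1}^2)$ that connects $A^0$ to $A^1$ while avoiding $\bigcup_{i<j}B(y_i,12L_k^2)$. The sets $A^0,A^1$ meet conditions~(a)--(c) of Lemma~\ref{lemma:new_lemma_3.2}, so that lemma, applied to $\sigma_j$, yields some $y_j\in K$ such that $\sigma_j\cap B(y_j,L_k)\neq\varnothing$ and $\S(y_j,L_k)$ occurs.

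It remains to verify the mutual separation. Fix $i<j\leq J$ and pick $z\in\sigma_j\cap B(y_j,L_k)$. Since $\sigma_j$ avoids $B(y_i,12L_k^2)$ we have $d(z,y_i)\geq 12L_k^2$, and the triangle inequality gives
\[
 d(y_i,y_j)\;\geq\;d(z,y_i)-d(z,y_j)\;\geq\;12L_k^2-L_k\;\geq\;9L_k^2.
\]
Hence the sequence $y_1,\ldots,y_J$ satisfies the requirements of Definition~\ref{d:joao}, so $\{\S(x,L_k)\}$ is cascading and Condition~\ref{cond:joao} holds. There is no serious obstacle in this argument: it is a clean induction whose only subtlety lies in recognising that at step $j$ one needs only to avoid the $j-1$ balls already produced, which is precisely what Condition~\ref{cond:avoid_balls} (with its parameter set to $j$) delivers.
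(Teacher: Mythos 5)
Your proof is correct and follows essentially the same inductive argument as the paper: at each step you invoke Condition~\ref{cond:avoid_balls} to produce a path from $A^0$ to $A^1$ avoiding the balls around the previously chosen points, then apply Lemma~\ref{lemma:new_lemma_3.2} to extract a new $y_j\in K$ with $\mathcal{S}(y_j,L_k)$ occurring. The only cosmetic difference is that you are more explicit about the choice of $\uc{c:k_def_joao}$ and the triangle-inequality verification of the $9L_k^2$ separation, both of which the paper leaves implicit.
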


The proof of this lemma will be a consequence of Lemma~\ref{lemma:new_lemma_3.2}.

\begin{proof}
Suppose that $k\geq \uc{c:cond_2}$.
In order to establish Condition~\ref{cond:joao}, we first fix $x \in V$, a set $K \subseteq B(x, 2L_{k+1}^2)$ such that $B(K, L_k)$ covers $B(x, L_{k+1}^2)$ and assume that the event $\S(x, L_{k+1})$ holds.
  We now need to show that the events $\S(y_j, L_k)$ occur for several points $y_1, \dots, y_J$, which will be done using induction in $j = 1, \dots, J$.

  The occurrence of $\S(x, L_{k+1})$ implies the existence of sets $A^0$ and $A^1$ in $B(x, 3L_{k+1})$ as in \eqref{e:separation_event}.
  To start the induction, we use the fact that $B(x, 3L_{k+1})$ is connected to obtain a path between $A^0$ and $A^1$ and employing Lemma~\ref{lemma:new_lemma_3.2} we obtain a point $y_1 \in K$ satisfying $\S(y_1, L_k)$.
  Then, supposing that we have already found a sequence $y_1, \dots, y_{J'} \in K$ for $J' < J$ as above, we use Condition~\ref{cond:avoid_balls} to obtain a path from $A^0$ to $A^1$ that avoids $\bigcup_{j \leq J'} B(y_j, 12 L_k^2)$.
  Therefore we can use Lemma~\ref{lemma:new_lemma_3.2} again in order to obtain a new vertex $y_{J' + 1} \in K$ within distance at least $9L_k^2$ from all the previous $y_1, \dots, y_{J'}$ and for which $\mathcal{S}(y_{J' + 1}, L_k)$ holds.
  We can now continue inductively until we get Condition~\ref{cond:joao}.
\end{proof}

\subsection{Finding disjoint paths}\label{ss:disjoint_paths}

The next lemma uses the Max-Flow-Min-Cut Theorem to show that we can find several disjoint paths connecting a large set $A \subset B(x, 3 L_{k+1})$ to the (internal) boundary of the ball $B(x, 2 L_{k+1}^2)$.
By possibly trimming some of these paths, we are able to find one that avoids several balls in the previous scale.

This lemma carries some similarities with Condition~\ref{cond:avoid_balls}, however the path that one obtains is not connecting $A^0$ to $A^1$, but rather $A^0$ to far away.
This difference is at the heart of the distinction between the isoperimetric condition \eqref{eq:isoperimetric} and the \emph{local isoperimetric inequality} of \cite{2014arXiv1409.5923T}.

\nc{c:N''_large}
\begin{Lemma} \label{lemma:N''}
Suppose that a given graph $G = (V, E)$ satisfies \eqref{eq:isoperimetric} and $\mathcal{V}(c_u, d_u)$ and let $v \geq 1$ be fixed.
Then there is a constant $\uc{c:N''_large} := \uc{c:N''_large}(J, \gamma, v, c_i, d_i, c_u, d_u, \uc{c:rough_trans})$ such that the following holds.
Fixed any $x \in V$, $k \geq \uc{c:N''_large} $, any collection $z_1, \dots, z_n \in B(x, L_{k+1}^2)$ with $n \leq J \log^2(L_k)$ and any set $A \subseteq B(x, 3\uc{c:rough_trans} L_{k+1})$ with $|A| \geq L_{k+1}/(100 v)$, we have $B(x, 3\uc{c:rough_trans} L_{k+1})\subset B(x, 2 L_{k+1}^2-1)$ and there is a path from $A$ to $\partial_i B(x, 2 L_{k+1}^2)$ that does not touch the union $\bigcup_{i\leq n}B(z_i, 20 \uc{c:rough_trans} L_k^2)$.
\end{Lemma}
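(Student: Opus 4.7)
The plan is to apply the Max-Flow-Min-Cut theorem inside the subgraph induced by $B(x, 2L_{k+1}^2)$ in order to produce a large collection of internally vertex-disjoint paths from $A$ to $\partial_i B(x, 2L_{k+1}^2)$, and then use pigeonhole to conclude that at least one of these paths avoids the forbidden region $F := \mcup_{i \leq n} B(z_i, 20 \uc{c:rough_trans} L_k^2)$. The lower bound on the number of disjoint paths comes from the isoperimetric inequality \eqref{eq:isoperimetric}, and the upper bound on $|F|$ comes from $\mathcal{V}(c_u, d_u)$; the hypothesis \eqref{eq:condition_gamma} on $\gamma$ is precisely what makes the first quantity dominate the second.

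For the lower bound on the number of disjoint paths, first note that $\mathcal{V}(c_u,d_u)$ applied at $r=1$ forces every vertex of $G$ to have degree at most $c_u$. Let $N$ denote the largest number of internally vertex-disjoint paths inside the induced subgraph on $B(x, 2L_{k+1}^2)$ going from $A$ to $\partial_i B(x, 2L_{k+1}^2)$, and, by Menger's theorem, let $C$ be a minimum vertex cut, so that $N = |C|$. Define $A^*$ as the union of those connected components of $B(x, 2L_{k+1}^2)\setminus C$ that meet $A$. Then $A \subseteq A^*$ and $A^* \cap \partial_i B(x, 2L_{k+1}^2) = \emptyset$, from which every edge of $\partial A^*$ (in $G$) must have its endpoint outside $A^*$ inside $C$: such an edge cannot leave $B(x, 2L_{k+1}^2)$ because that would put its interior endpoint in $\partial_i B(x, 2L_{k+1}^2) \cap A^*$. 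Using the degree bound and \eqref{eq:isoperimetric} I obtain
\[
c_u |C| \;\geq\; |\partial A^*| \;\geq\; c_i |A^*|^{(d_i-1)/d_i} \;\geq\; c_i \Bigl(\frac{L_{k+1}}{100 v}\Bigr)^{\!(d_i-1)/d_i},
\]
which, using $L_{k+1} = L_k^{\gamma}$, yields $N \geq c' L_k^{\gamma (d_i - 1)/d_i}$ for a constant $c' = c'(c_i, c_u, d_i, v) > 0$.

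For the upper bound on $|F|$, the polynomial growth $\mathcal{V}(c_u, d_u)$ gives $|B(z_i, 20\uc{c:rough_trans} L_k^2)| \leq c_u (20 \uc{c:rough_trans})^{d_u} L_k^{2 d_u}$, and combined with $n \leq J \log^2(L_k)$ this gives
\[
|F| \;\leq\; c'' J \log^2(L_k) \, L_k^{2 d_u},
\]
for some constant $c'' = c''(c_u, d_u, \uc{c:rough_trans})$. Since the $N$ paths are internally vertex-disjoint, each vertex of $F$ can lie on at most one of them, so at most $|F|$ of the $N$ paths intersect $F$. Therefore the number of disjoint paths avoiding $F$ is at least
\[
c' L_k^{\gamma (d_i - 1)/d_i} \;-\; c'' J \log^2(L_k)\, L_k^{2 d_u}.
\]
The hypothesis \eqref{eq:condition_gamma} guarantees $\gamma (d_i - 1)/d_i > 2 d_u$, so the first term dominates the second, and there exists $\uc{c:N''_large} = \uc{c:N''_large}(J, \gamma, v, c_i, d_i, c_u, d_u)$ such that the above difference is strictly positive whenever $k \geq \uc{c:N''_large}$. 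Any such remaining path supplies the path required by the lemma.

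The main technical subtlety I expect is in the first step: the isoperimetric inequality \eqref{eq:isoperimetric} is a statement about $\partial A^*$ inside the whole graph $G$, so I need to argue that none of the edges of $\partial A^*$ escape the ball $B(x, 2 L_{k+1}^2)$ and that all their exterior endpoints genuinely sit inside $C$. This is where the choice to replace $A$ by the saturation $A^*$ of its connected component in $B(x, 2 L_{k+1}^2) \setminus C$ is crucial; the remaining ingredients are routine polynomial-versus-logarithmic comparisons governed by \eqref{eq:condition_gamma}.
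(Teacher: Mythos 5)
Your argument follows the same route as the paper's proof: Max-Flow-Min-Cut (or Menger) together with the isoperimetric inequality to produce many disjoint paths from $A$ to $\partial_i B(x, 2L_{k+1}^2)$, followed by a volume/pigeonhole step to discard the paths that hit the forbidden balls, with \eqref{eq:condition_gamma} ensuring the two exponents separate. The one substantive difference is that you work with vertex-disjoint paths and a vertex cut $C$, while the paper works with edge-disjoint paths and an edge cut $C_A$. Your choice is slightly cleaner in the pigeonhole step (each vertex of the forbidden set lies on at most one of your paths, whereas edge-disjoint paths can share vertices, which forces a bounded-degree factor that the paper absorbs into the ``$k$ large'' slack).

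There is, however, a small gap in your first step. In the set-to-set vertex version of Menger's theorem, the minimizing vertex cut $C$ is allowed to contain vertices of $A$ itself, so the inclusion $A \subseteq A^*$ that you assert need not hold (the vertices of $A \cap C$ are deleted from $B(x, 2L_{k+1}^2) \setminus C$ and hence are absent from $A^*$). This is easy to repair: note $|A^*| \geq |A| - |C| = |A| - N$. If $N \geq |A|/2$ you are already done, because $|A| \geq L_{k+1}/(100 v) = L_k^\gamma/(100 v)$ dominates $L_k^{\gamma(d_i-1)/d_i}$; otherwise $|A^*| \geq |A|/2$ and your isoperimetric estimate goes through unchanged up to the harmless constant $2^{(d_i-1)/d_i}$. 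The paper's edge-cut formulation sidesteps this issue entirely, because the saturation $\tilde A$ (all vertices reachable from $A$ without using an edge of $C_A$) automatically contains $A$.
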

For every finite set $A$ define
\begin{equation}\label{eq:def_N}
N(A) := c_i |A|^{\frac{d_i-1}{d_i}},
\end{equation}
with $c_i > 0$.
Note the resemblance with \eqref{eq:isoperimetric}.

\begin{proof}
We start by showing that when $k$ is large enough, there are at least $N(A)$ edge-disjoint paths connecting $A$ to $\partial_i B(x, 2 L_{k+1}^2)$.
In fact, suppose by contradiction that this is not verified.
Then, by the Max-Flow Min-Cut Theorem, there exists a set of edges $C_A$ inside the ball $B(x, 2 L_{k+1}^2)$ which disconnects $A$ from $\partial_i B(x, 2 L_{k+1}^2)$ and such that $|C_A| < N(A)$.
Then we have
\[
|C_A |  <  N(A) = c_i|A|^{\frac{d_i - 1}{d_i}}.
\]
But then, this implies that there is a finite set $\tilde A$ (containing $A$) of points that can be reached from $A$ without using edges in $C_A$ that has to satisfy
\[
|\partial \tilde A| \leq |C_A | < c_i|A|^{\frac{d_i - 1}{d_i}} \leq c_i |\tilde A|^{\frac{d_i - 1}{d_i}},
\]
contradicting condition \eqref{eq:isoperimetric} and hence proving the first step.

We now use this fact in order to find a path that satisfies the statement of the lemma.
In fact, if we denote by $ \mathcal{M}_k(A)$ the maximal number of disjoint paths from $A$ to $\partial_i B(x, 2 L_{k+1}^2)$, using the above we have
\begin{equation}
\begin{split}
\mathcal{M}_k(A) & \geq  N(A) = c_i|A|^{\frac{d_i - 1}{d_i}}\\
& \begin{array}{e}
    & \geq & c_i\left (\frac{L_{k+1}}{100 v}\right )^{\frac{d_i - 1}{d_i}}\geq c L_k^{\gamma\frac{d_i - 1}{d_i}}\\
    & \overset{\eqref{eq:condition_gamma}, k \text{ large}}{>} & 3J\log^2(L_k)c_u(20 \uc{c:rough_trans} L_k^2)^{d_u}\\
    & \stackrel{\eqref{eq:volume_upper_bound}}{\geq } & \sup_{j} \{ 3 J \log^2(L_k) |B(y_j,20 \uc{c:rough_trans} L_k^2)|\}.
  \end{array}
\end{split}
\end{equation}
This bound shows that if we remove all those paths connecting $A$ to $\partial_i B(x, 2 L_{k+1}^2)$ which happen to intersect $\bigcup_{i\leq n}B(z_i, 20 \uc{c:rough_trans} L_k^2)$, we are still left with several paths.
\end{proof}

\subsection{Embedding a tree into \texorpdfstring{$G$}{G}}
\label{ss:embed}

In this section we will assume that Condition~\ref{cond:avoid_balls} fails, since we have already proved Theorem~\ref{thm:p_c<1_dependent} assuming Condition~\ref{cond:joao}, which follows from Condition~\ref{cond:avoid_balls}.

Negating Condition~\ref{cond:avoid_balls} is equivalent to saying that there exist
\begin{gather}
\label{e:bad_J}
\text{some number $J \geq 1$,}\\
\text{a sequence of points $x_l \in V$,}\\
\text{a diverging sequence $k_l \to \infty$ of scales,}\\
\text{connected sets $A^0_l, A^1_l \subseteq B(x_l, 3L_{k_l + 1})$ with $d(A^0_l, A^1_l) > 1$, $\diam(A^i_l) \geq L_{k_l + 1}/100$}\\
\label{eq:evil_ys}
\text{and for each $l \geq 1$ a collection $y^l_1, \dots, y^l_{J-1} \in B(x_l, 2 L_{k_l+1}^2)$}
\end{gather}
such that
\begin{display}
  \label{e:Hl_splits_As}
  every path in $B(x_l, L_{k_l+1}^2)$ connecting $A^0_l$ to $A^1_l$\\
  touches the set $\smash{\mcup_{j \leq J-1}} B(y^l_j, 12L_{k_l}^2)$.
\end{display}
We are now in position to start embedding a binary tree inside $G$, which will ultimately lead to a contradiction on the polynomial growth that we assumed on $G$.
The nodes of this tree will simply be vertices of $G$, however two adjacent vertices in the tree will not be mapped to neighbors in $G$.
Instead, they will be mapped into reasonably far apart points as we describe in detail soon.

The nodes of our binary tree are indexed by words in the alphabet $\{0,1\}$.
Let $\Gamma$ denote the set of words in this alphabet.
For every such a word $\omega \in \Gamma$, we denote by $|\omega|$ its length and by $\omega' \omega$ the word obtaining by appending $\omega$ to the right of $\omega'$.
In this case, we say that $\omega'$ is a prefix of $\omega' \omega$.
This prefix is said to be proper if $\omega$ is non-empty.

We denote the bad set
\begin{equation}
  H_l = \mcup_{j \leq J-1} B(y_j^l, 20 \uc{c:rough_trans} L_{k_l}^2).
\end{equation}
Note that the balls used to define $H_l$ have radius $20 \uc{c:rough_trans} L_{k_l}^2$, which is larger than the ones appearing in \eqref{e:Hl_splits_As}.
This difference will be important later once we start playing with rough isomorphisms.

\begin{Remark}
In the next lemma, given some $l \geq 1$ and any word $\omega \in \Gamma$ such that $|\omega| \leq \log^2(L_{k_l})$, we will construct a $\uc{c:rough_trans}$-rough isometry $\phi^l_\omega$ of $G$.
Given such a map, we can define
\begin{equation}
  \label{e:build_tree}
  \begin{array}{c}
    x_l(\omega) := \phi^l_\omega(x_l),\\
    y_j^l(\omega) := \phi^l_\omega(y_j^l),\\
    A^i_l(\omega) := \phi^l_\omega(A^i_l), \text{ $i = 0, 1$},\\
    B_l(\omega) := B\bigl (x_l(\omega), 3\uc{c:rough_trans} L_{k_l + 1}\bigr ) \text{ and}\\
    H_l(\omega) := 
    \bigcup_{j \leq J-1} B(y_j^l(\omega), 20 \uc{c:rough_trans}L_{k_l}^2).
  \end{array}
\end{equation}
Note that $\phi^l_\varnothing$ will be the identity map on $G$.

Therefore, we can think of $x_l$, $A^0_l$ and $A^1_l$ as $x_l(\varnothing)$, $A^0_l(\varnothing)$ and $A^1_l(\varnothing)$ respectively.
In the same way, we have that $y_j^{l} = y_j^{l}(\varnothing)$ for all $j$ and $l$ as above.
\end{Remark}

The next lemma constructs an embedding of a binary tree into $G$ satisfying a list of requirements.
Later we will use this together with \eqref{e:Hl_splits_As} to show that all leafs of the constructed tree have to be disjoint, contradicting the polynomial growth of the graph $G$, see Lemma~\ref{lemma:loops}.

\nc{c:build_tree}
\begin{Lemma}\label{lemma:induction}
Let $G$ be a $\uc{c:rough_trans}$-roughly transitive graph satisfying \eqref{eq:isoperimetric} and $\mathcal{V}(c_u, d_u)$ and assume the existence of a collection $(J, (x_l), (k_l), (A^0_l), (A^1_l), y_j^l)_{l \geq 1}$ as in \eqref{e:bad_J}--\eqref{eq:evil_ys}.
Then, there exists $\uc{c:build_tree} = \uc{c:build_tree}(J, \gamma, \uc{c:rough_trans}, c_i, d_i, c_u, d_u, \uc{c:N''_large})$ such that for $k_l \geq \uc{c:build_tree}$ and for each $\omega$ such that $1 \leq |\omega| \leq \log^2(L_{k_l})$, we can construct
\begin{enumerate}
\item a $\uc{c:rough_trans}$-rough isometry $(\phi^l_\omega)$ and
\item a path $\gamma_\omega$,
\end{enumerate}
in such a way that the following holds.
For $\omega$ such that $|\omega| \leq \log^2(L_{k_l}) - 1$,
\begin{gather}
  \label{e:xs_not_too_far}
  \text{if $\omega = \omega' i$, for $i = 0, 1$, then $\gamma_{\omega'} \subseteq B(x_l(\omega), L_{k_l + 1}^{3/2})$,}\\
  \label{e:B_H_disj}
  \text{for any $\omega'$ proper prefix of $\omega$, $B_l(\omega)$ is disjoint from $H_l(\omega')$,}\\
  \label{e:gamma_connects}
  \text{if $\omega = \omega'i$, with $i = 0,1$, then $\gamma_\omega$ connects $A^i_l(\omega')$ to $x_l(\omega)$ and}\\
  \label{e:gamma_H_disj}
  \text{if $\omega'$ is a proper prefix of $\omega$, then the path $\gamma_{\omega}$ is disjoint from $H_l(\omega')$}.
\end{gather}
See Figure~\ref{f:crab_party} for an illustration of the above.
\end{Lemma}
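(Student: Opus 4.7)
The plan is to construct the pair $(\phi^l_\omega,\gamma_\omega)$ by induction on $n:=|\omega|$, taking $\phi^l_\varnothing=\mathrm{id}$ as the base case, so that the four listed properties are vacuous at depth zero (they only concern proper prefixes). For the inductive step I fix $\omega'$ with $|\omega'|=n<\log^2(L_{k_l})$ and $i\in\{0,1\}$, and produce $(\phi^l_{\omega' i},\gamma_{\omega' i})$ together with the derived data of \eqref{e:build_tree}.

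The core of the step is a single application of Lemma~\ref{lemma:N''} centered at $x:=x_l(\omega')$ and at scale $k_l$. The input set is $A^i_l(\omega')=\phi^l_{\omega'}(A^i_l)$: since $A^i_l\subseteq B(x_l,3L_{k_l+1})$ is connected with diameter at least $L_{k_l+1}/100$, and $\phi^l_{\omega'}$ is a $\uc{c:rough_trans}$-rough isometry, \eqref{e:large_image} and \eqref{e:rough_iso} yield $|A^i_l(\omega')|\geq L_{k_l+1}/\bigl(100\bar v_G(\uc{c:rough_trans})\bigr)$ and $A^i_l(\omega')\subseteq B(x,3\uc{c:rough_trans} L_{k_l+1})$, matching the hypothesis of Lemma~\ref{lemma:N''} with $v=\bar v_G(\uc{c:rough_trans})$ (the constant-factor enlargement of the outer ball is absorbed into the threshold $\uc{c:N''_large}$, using $\gamma\geq 2$). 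The ``forbidden'' centers passed to the lemma are the points $\phi^l_{\omega''}(y^l_j)$ with $j\leq J-1$ and $\omega''$ a proper prefix of $\omega' i$ that lie inside $B(x,L_{k_l+1}^2)$; their number is bounded by $(J-1)(n+1)\leq J\log^2(L_{k_l})$, exactly the quantity allowed by Lemma~\ref{lemma:N''}. The lemma then produces a path from $A^i_l(\omega')$ to some vertex on $\partial_i B(x,2L_{k_l+1}^2)$ avoiding $\bigcup_{j,\omega''} B\bigl(\phi^l_{\omega''}(y^l_j),20\uc{c:rough_trans} L_{k_l}^2\bigr)$. I declare this path to be $\gamma_{\omega' i}$, its terminal vertex to be $x_l(\omega' i)$, and then use rough transitivity of $G$ to select a $\uc{c:rough_trans}$-rough isometry $\phi^l_{\omega' i}$ with $\phi^l_{\omega' i}(x_l)=x_l(\omega' i)$.

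The four properties are now verified as follows. Property \eqref{e:gamma_connects} holds by design, \eqref{e:gamma_H_disj} follows because every ancestor's bad-ball centers were in the forbidden list, and \eqref{e:xs_not_too_far} follows from the containment $\gamma_{\omega' i}\subseteq B(x,2L_{k_l+1}^2)$ and the position of $x_l(\omega' i)$ on the sphere of radius $2L_{k_l+1}^2$, together with $\gamma\geq 2$ when rewriting the bound as a $L_{k_l+1}^{3/2}$-containment around $x_l(\omega' i)$. The delicate property is \eqref{e:B_H_disj}: for the immediate predecessor $\omega''=\omega'$, we have $H_l(\omega')\subseteq B(x,2\uc{c:rough_trans} L_{k_l+1}^2+20\uc{c:rough_trans} L_{k_l}^2)$ by \eqref{e:rough_iso}, and a direct distance comparison shows that, for $L_{k_l}$ past a $\uc{c:rough_trans}$-dependent threshold, $H_l(\omega')$ stops strictly short of the sphere $\partial_i B(x,2L_{k_l+1}^2)$ with slack at least $3L_{k_l+1}$, so $B_l(\omega' i)$ misses it; for earlier ancestors $\omega''$, one combines the inductive disjointness $B_l(\omega')\cap H_l(\omega'')=\emptyset$ with the path-avoidance $\gamma_{\omega' i}\cap H_l(\omega'')=\emptyset$ just established to conclude that $x_l(\omega' i)$ remains in the same complementary component of $H_l(\omega'')$ as $B_l(\omega')$ and is quantitatively far from it. The principal obstacle in the argument is exactly the simultaneous constant-tracking required by \eqref{e:B_H_disj} across all ancestors at once, which is what forces the dependence of $\uc{c:build_tree}$ on $J,\gamma,\uc{c:rough_trans},c_i,d_i,c_u,d_u$.
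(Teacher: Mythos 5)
Your high-level framework (induction on $|\omega|$, applying Lemma~\ref{lemma:N''} at $x_l(\omega')$ to $A^i_l(\omega')$ with forbidden centers $\phi^l_{\omega''}(y^l_j)$, then using rough transitivity to select $\phi^l_{\omega'i}$) is the same as the paper's. But you are missing the crucial step of \emph{truncating} the path $\gamma_{\omega'i}$ before declaring its endpoint to be $x_l(\omega'i)$, and this omission makes your verification of \eqref{e:xs_not_too_far} and \eqref{e:B_H_disj} fail.

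For \eqref{e:xs_not_too_far}: Lemma~\ref{lemma:N''} delivers a path from $A^i_l(\omega')$ all the way to $\partial_i B(x_l(\omega'),2L_{k_l+1}^2)$. If you let $x_l(\omega'i)$ be the terminal vertex of this full path, then the path has diameter of order $L_{k_l+1}^2$, which cannot possibly fit inside a ball of radius $L_{k_l+1}^{3/2}$ around any point — neither around $x_l(\omega')$ nor around $x_l(\omega'i)$; the observation that ``$\gamma\geq 2$'' does not help since $L_{k_l+1}^2\geq L_{k_l+1}^{3/2}$ always. The paper truncates the path as soon as it reaches distance $L_{k_l+1}^{3/2}$ from $x_l(\omega')$, and only then calls the endpoint $x_l(\omega'i)$.

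For \eqref{e:B_H_disj}: both branches of your argument break. For the immediate predecessor, your inclusion $H_l(\omega')\subseteq B(x,2\uc{c:rough_trans} L_{k_l+1}^2+\dots)$ goes the wrong way: since $\uc{c:rough_trans}>1$ this ball is \emph{larger} than $B(x,2L_{k_l+1}^2)$, so you cannot conclude that $H_l(\omega')$ ``stops strictly short'' of the sphere, and the centers $y_j^l(\omega')$ could lie very near $\partial_i B(x,2L_{k_l+1}^2)$ — exactly where your candidate $x_l(\omega'i)$ sits. For earlier ancestors, path-avoidance of $H_l(\omega'')$ plus connectedness only tells you $x_l(\omega'i)$ stays in the same complementary component of $H_l(\omega'')$, but gives no quantitative lower bound on $d(x_l(\omega'i),H_l(\omega''))$, so it does not imply that $B_l(\omega'i)=B(x_l(\omega'i),3L_{k_l+1})$ misses $H_l(\omega'')$. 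The paper's fix is a counting argument: after the first truncation, the path still has diameter at least $L_{k_l+1}^{3/2}-3L_{k_l+1}$, and the set of points within distance $3L_{k_l+1}$ of $\bigcup_{\omega''} H_l(\omega'')$ is covered by at most $(J-1)\log^2 L_{k_l}$ balls of radius $3L_{k_l+1}+20\uc{c:rough_trans} L_{k_l}^2$; by the threshold inequality \eqref{e:few_defects}, these balls cannot cover the whole path, so one can truncate further at a vertex whose $3L_{k_l+1}$-neighborhood avoids every $H_l(\omega'')$, giving \eqref{e:B_H_disj} directly. You need to add both truncations (and the accompanying diameter/covering estimate) to make the argument go through.
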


\begin{proof}
We start by choosing the constant $\uc{c:build_tree}(J, \gamma, \uc{c:rough_trans}, c_i, d_i, c_u, d_u, \uc{c:N''_large}) \geq \uc{c:N''_large}$ in such a way that for $k_l \geq \uc{c:build_tree}$,
  \begin{gather}
    \label{e:def_c_tree}
    \log^2(L_{k_l}) > J\\
    \label{e:few_defects}
    2 (J - 1)(3\uc{c:rough_trans} L_{k_l + 1} + 20 \uc{c:rough_trans} L_{k_l}^2) \log^2 L_{k_l} < L_{k_l + 1}^{3/2} - 3\uc{c:rough_trans}L_{k_l + 1}-1
  \end{gather}
  which can be done by our choice of the scales $L_k$ in \eqref{eq:condition_gamma}.
  Since we are assuming \eqref{eq:isoperimetric} and that $\uc{c:build_tree} \geq \uc{c:N''_large}$, the conclusion of Lemma~\ref{lemma:N''} is at our disposal (at each scale $k_l\geq \uc{c:build_tree}$).

  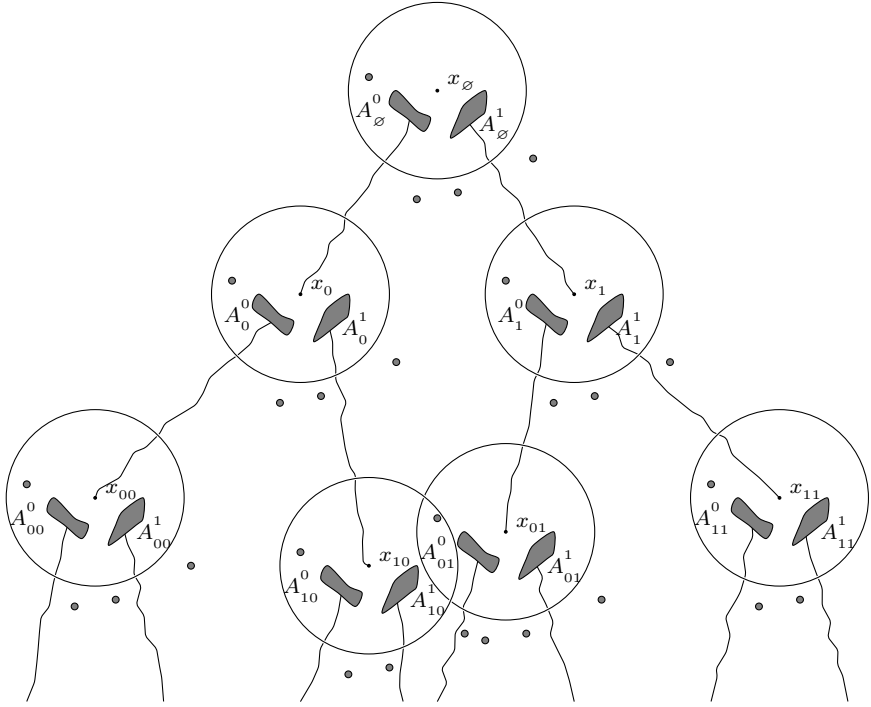
\begin{figure}
    \centering
    \begin{tikzpicture}[scale=.9]
      \draw[decorate, rounded corners=1pt, decoration={random steps,segment length=6pt,amplitude=2pt}] (0 + \htrone, 6 - \htrtwo) -- (2, 3);
      \draw[decorate, rounded corners=1pt, decoration={random steps,segment length=6pt,amplitude=2pt}] (0 - \htrone, 6 - \htrtwo) -- (-2, 3);
      \draw[decorate, rounded corners=1pt, decoration={random steps,segment length=6pt,amplitude=2pt}] (2 + \htrone, 3 - \htrtwo) -- (5, 0);
      \draw[decorate, rounded corners=1pt, decoration={random steps,segment length=6pt,amplitude=2pt}] (2 - \htrone, 3 - \htrtwo) -- (1, -.5);
      \draw[decorate, rounded corners=1pt, decoration={random steps,segment length=6pt,amplitude=2pt}] (-2 + \htrone, 3 - \htrtwo) -- (-1, -1);
      \draw[decorate, rounded corners=1pt, decoration={random steps,segment length=6pt,amplitude=2pt}] (-2 - \htrone, 3 - \htrtwo) -- (-5, 0);
      \draw[decorate, rounded corners=1pt, decoration={random steps,segment length=6pt,amplitude=2pt}] (1 + \htrone, -.5 - \htrtwo) -- (2, -3);
      \draw[decorate, rounded corners=1pt, decoration={random steps,segment length=6pt,amplitude=2pt}] (1 - \htrone, -.5 - \htrtwo) -- (0, -3);
      \draw[decorate, rounded corners=1pt, decoration={random steps,segment length=6pt,amplitude=2pt}] (-1 + \htrone, -1 - \htrtwo) -- (-.5, -3);
      \draw[decorate, rounded corners=1pt, decoration={random steps,segment length=6pt,amplitude=2pt}] (-1 - \htrone, -1 - \htrtwo) -- (-2, -3);
      \draw[decorate, rounded corners=1pt, decoration={random steps,segment length=6pt,amplitude=2pt}] (-5 + \htrone, - \htrtwo) -- (-4, -3);
      \draw[decorate, rounded corners=1pt, decoration={random steps,segment length=6pt,amplitude=2pt}] (-5 - \htrone, - \htrtwo) -- (-6, -3);
      \draw[decorate, rounded corners=1pt, decoration={random steps,segment length=6pt,amplitude=2pt}] (5 + \htrone, - \htrtwo) -- (6, -3);
      \draw[decorate, rounded corners=1pt, decoration={random steps,segment length=6pt,amplitude=2pt}] (5 - \htrone, - \htrtwo) -- (4, -3);
      \crab{ 0}{  6}{ \varnothing}
      \crab{ 2}{  3}{ 1}
      \crab{-2}{  3}{ 0}
      \crab{ 1}{-.5}{01}
      \crab{-1}{ -1}{10}
      \crab{-5}{  0}{00}
      \crab{ 5}{  0}{11}
    \end{tikzpicture}
    \caption{An illustration of the isomorphisms $\phi^l_\omega$ and the paths $\gamma_\omega$ defined in Lemma~\ref{lemma:induction}.
      Note that the points $x_l(\omega)$ and the sets $A^i_l(\omega)$ are images under $\phi^l_\omega$.
      The small gray circles correspond to the sets $H_l(\omega)$.}
    \label{f:crab_party}
  \end{figure}

  In order to construct the maps $\phi^l_\omega$, we follow an induction argument on the length of the word $\omega$.
  The only word of length zero is $\varnothing$ and we have already defined $\phi_\varnothing^l$ as the identity map.
  Assume that for $n \leq  \log^2(L_{k_l})-1$ we have already constructed the maps $\big(\phi^l_\omega\big)_{|\omega| \leq n}$ and paths $(\gamma_\omega)_{1 \leq |\omega| \leq n}$, satisfying \eqref{e:xs_not_too_far}--\eqref{e:gamma_H_disj}.
  Then, given any word $\omega$ with $|\omega| = n$, our task is now to define $\phi^l_{\omega 0}$ and $\phi^l_{\omega 1}$ with help of Lemma~\ref{lemma:N''}.

  To apply Lemma~\ref{lemma:N''}, we need to choose the points $z_1, \dots, z_m$ to be avoided, which roughly speaking will correspond to the points $\{y^l_j(\omega')$, for each $\omega'$ prefix of $\omega\}$.
  More precisely, we denote by $\omega_k$ the unique prefix of $\omega$ with $|\omega_k| = k$ and set
  \begin{gather}
    z_{k(J-1) + j - 1} = y^l_j (\omega_k), \text{ with $k = 0, \dots, |\omega|$ and $j = 1, \dots, J - 1$.}
  \end{gather}

  Recall that $|\omega| \leq \log^2(L_{k_l})-1$, so that the number of $z_i$'s is no larger than $J \log^2(L_{k_l})$.
  Using \eqref{e:large_image}, we conclude that
  \begin{equation*}
    |A_l^0(\omega)| \geq \frac{|A_l^0|}{c_u (\uc{c:rough_trans})^{d_u}} \geq \frac{L_{k_l + 1}}{100 v},
  \end{equation*}
  where, as we have mentioned, $v = c_u (\uc{c:rough_trans})^{d_u}$ (cf.\ Remark \ref{remark:note_v}).
  The same also being true for $A_l^1(\omega)$.
  We are now in position to apply Lemma~\ref{lemma:N''}, which provides us with paths $\gamma_{\omega 0}$ from $A^0_l(\omega)$ to $\partial_i B(x_l(\omega), 2\uc{c:rough_trans} L_{k_l + 1}^2)$ and $\gamma_{\omega 1}$ from $A^1_l(\omega)$ to $\partial_i B(x_l(\omega), 2\uc{c:rough_trans} L_{k_l + 1}^2)$ that satisfy \eqref{e:gamma_H_disj}.
  More precisely, these paths are such that
  \begin{display}
    \label{e:sneak}
    $\gamma_{\omega 0}$ and $\gamma_{\omega 1}$ do not touch the union of the balls $B(z_i, 20 \uc{c:rough_trans} L_{k_l}^2)$, for any $i$.
  \end{display}
  These paths will give rise to the two children of $\omega$ ($\omega 0$ and $\omega 1$).
  Recall that these paths go quite far, reaching distance $2\uc{c:rough_trans} L_{k_l + 1}^2$ from $x_l(\omega)$, however we are going to truncate these paths earlier in such a way that \eqref{e:xs_not_too_far} holds and moreover
  \begin{display}
    \label{e:stop_far}
    the end points of the paths $\gamma_{\omega 0}$ and $\gamma_{\omega 1}$ lie within distance at least $3 \uc{c:rough_trans} L_{k_l + 1}$ from $H_l(\omega')$ for any $\omega'$ prefix of $\omega$.
  \end{display}
  Before proving the above, let us briefly see why this would finish the proof of the lemma.
  We call these end-points $x_l(\omega 0)$ and $x_l(\omega 1)$ respectively and using the rough transitivity of the graph $G$ we can find two $\uc{c:rough_trans}$-rough isometries, satisfying $\phi^l_{\omega 0} (x_l(\varnothing)) = x_l(\omega 0)$ and $\phi^l_{\omega 1}(x_l(\varnothing)) = x_l(\omega 1)$.
  We can now define $A_l^i$, $B_l$ and $H_l$ as in \eqref{e:build_tree}, obtaining another layer of the tree.
  The fact that these satisfy \eqref{e:B_H_disj}--\eqref{e:gamma_H_disj} is a consequence of their construction, \eqref{e:sneak} and \eqref{e:stop_far}.

  We still need to prove that we can stop the paths $\gamma_{\omega 0}$ and $\gamma_{\omega 1}$ in such a way that they satisfy \eqref{e:xs_not_too_far} and \eqref{e:stop_far}.
  First observe that a point $x$ being within distance at least $3\uc{c:rough_trans}L_{k_l + 1}$ from the sets $H_l(\omega')$ (for $\omega'$ prefix of $\omega$) is equivalent to $x$ being within distance $3\uc{c:rough_trans}L_{k_l + 1} + 20 \uc{c:rough_trans} L_{k_l}^2$ from the collection of points $K = \{y^l_j(\omega'); \text{ $\omega'$ prefix of $\omega$ and $j \leq J - 1$} \}$.
Hence we stop these paths as soon as they reach distance $\lfloor L_{k_l + 1}^{3/2}\rfloor $ from $x_l(\omega)$ (recall that they reach $\partial_i B(x_l(\omega), 2\uc{c:rough_trans}L_{k_l + 1}^2)$), therefore $\gamma_{\omega 0}$ and $\gamma_{\omega 1}$ will automatically satisfy \eqref{e:xs_not_too_far}.

  Even after this truncation, the ranges of these paths still have diameter at least $L_{k_l + 1}^{3/2} - 3\uc{c:rough_trans}L_{k_l + 1}$.
  Therefore, by \eqref{e:few_defects} they cannot be covered by $(J - 1) \log^2 L_k$ balls of radius $3\uc{c:rough_trans}L_{k_l + 1} + 20 \uc{c:rough_trans} L_{k_l}^2$.
  This proves that we can stop the paths $\gamma_{\omega 0}$ and $\gamma_{\omega 1}$ in a way that their endpoints satisfy \eqref{e:stop_far}, finishing the proof of the lemma.
\end{proof}

In order to conclude the proof of the \nameref{lemma:joao} we will show that under the current assumptions all the points $(x_l(\omega))_{|\omega| = \lfloor \log^2(L_k) \rfloor}$ are disjoint, contradicting the polynomial growth that we have assumed on the graph $G$.

\nc{c:loops}
\begin{Lemma}\label{lemma:loops}
  There exists a constant $\uc{c:loops} = \uc{c:loops}(\gamma, \uc{c:rough_trans})$ such that for all $k_l \geq \uc{c:loops}$, if $k \geq \uc{c:build_tree}$, then we have the following.
  Let $n_l = \lfloor \log^2(L_{k_l}) \rfloor$ and fix a construction of the $\uc{c:rough_trans}$-rough isometries $\phi^l_\omega$, for $|\omega| \leq n_l$ as in Lemma~\ref{lemma:induction}.
  Then for all pair of words $\omega$, $\omega'$ such that $|\omega| = |\omega'| = n_l$, the points $x_l(\omega)$ and $x_l(\omega')$ are distinct.
\end{Lemma}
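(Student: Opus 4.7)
The plan is to argue by contradiction: assume two distinct words $\omega,\omega'$ of length $n_l$ satisfy $x_l(\omega)=x_l(\omega')$, and manufacture from this a path in $G$ contradicting property \eqref{e:Hl_splits_As} at the root level. Let $\omega''$ be the longest common prefix of $\omega$ and $\omega'$; then $|\omega''|<n_l$ and we may write $\omega=\omega''\,i\,\tilde\omega$, $\omega'=\omega''\,(1-i)\,\tilde\omega'$ with $i\in\{0,1\}$. The strategy is to build a path $\Pi$ in $G$ going from $A^i_l(\omega'')$ up through the tree to the common vertex $x_l(\omega)=x_l(\omega')$ and back down to $A^{1-i}_l(\omega'')$, show that $\Pi$ avoids $H_l(\omega'')$ and sits in a small ball around $x_l(\omega'')$, and then pull it back by a rough inverse of $\phi^l_{\omega''}$ to produce a path connecting $A^0_l$ and $A^1_l$ in $B(x_l,L_{k_l+1}^2)$ that avoids $\mcup_{j\leq J-1}B(y^l_j,12L_{k_l}^2)$, which would contradict \eqref{e:Hl_splits_As}.

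For the construction of $\Pi$, denote by $\omega_k$ the prefix of $\omega$ of length $k$. Starting from $x_l(\omega)=x_l(\omega_{n_l})$ I iteratively, for $k$ descending from $n_l$ to $|\omega''|+1$, concatenate the reverse of $\gamma_{\omega_k}$ (which by \eqref{e:gamma_connects} joins $A^{i_k}_l(\omega_{k-1})$ to $x_l(\omega_k)$) with a shortest path inside $B_l(\omega_{k-1})$ from the endpoint of $\gamma_{\omega_k}$ to $x_l(\omega_{k-1})$; such shortest paths remain inside $B_l(\omega_{k-1})$ by the triangle inequality, since both endpoints lie in that ball. Invoking \eqref{e:B_H_disj} and \eqref{e:gamma_H_disj} with $\omega''$ serving as a proper prefix of each $\omega_k$ used, every segment in this chain avoids $H_l(\omega'')$. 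Performing the symmetric construction along $\omega'$ and concatenating at the common vertex $x_l(\omega)=x_l(\omega')$ produces a path $\Pi$ from a point of $A^i_l(\omega'')$ to a point of $A^{1-i}_l(\omega'')$, entirely disjoint from $H_l(\omega'')$. Combining \eqref{e:xs_not_too_far} (which gives $\gamma_{\omega_k}\subseteq B(x_l(\omega_k),L_{k_l+1}^{3/2})$) with the fact that each in-ball shortest path has length at most $3L_{k_l+1}$, and summing over the at most $2n_l\leq 2\log^2(L_{k_l})$ segments, $\Pi$ has diameter $O(\log^2(L_{k_l})\,L_{k_l+1}^{3/2})$; since $\gamma\geq 2$ and $n_l=\lfloor\log^2(L_{k_l})\rfloor$, this is, for $k_l$ sufficiently large, much smaller than $L_{k_l+1}^2/(5\uc{c:rough_trans}^2)$, hence $\Pi\subseteq B(x_l(\omega''),L_{k_l+1}^2/(5\uc{c:rough_trans}^2))$.

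To finish, I take a rough inverse $\psi$ of $\phi^l_{\omega''}$ provided by \eqref{e:rough_inverse}, apply it vertex-by-vertex to $\Pi$, fill in shortest paths of length at most $4\uc{c:rough_trans}^2$ between consecutive images, and prepend/append short paths of length at most $\uc{c:rough_trans}^2+\uc{c:rough_trans}$ at the endpoints so as to land in honest points of $A^0_l$ and $A^1_l$ (using $d(\psi(\phi(a)),a)\leq\uc{c:rough_trans}^2+\uc{c:rough_trans}$, a consequence of \eqref{e:rough_inverse} combined with the rough-isometry property of $\phi^l_{\omega''}$). The resulting honest path $\tilde\Pi$ sits in $B(x_l,L_{k_l+1}^2)$ by the diameter bound on $\Pi$ together with the $4\uc{c:rough_trans}^2$-distortion of $\psi$, and the extra factor of $\uc{c:rough_trans}$ baked into the radius $20\uc{c:rough_trans} L_{k_l}^2$ in the definition of $H_l$ is designed exactly so that avoidance of $H_l(\omega'')$ by $\Pi$ translates, after distortion by $\psi$ and by the fillers, into avoidance of $\mcup_{j\leq J-1}B(y^l_j,12L_{k_l}^2)$ by $\tilde\Pi$. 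This directly contradicts \eqref{e:Hl_splits_As}, so the vertices $x_l(\omega)$ indexed by $|\omega|=n_l$ must be pairwise distinct. The main obstacle is precisely the bookkeeping in the pull-back step: rough-isometric distortions accumulate over $n_l=\lfloor\log^2(L_{k_l})\rfloor$ tree levels, and one must carefully verify that the slack built into the radius $L_{k_l+1}^2$ of the ambient ball and into the clearance defining $H_l$ is enough to produce a contradiction in both the containment in $B(x_l,L_{k_l+1}^2)$ and the avoidance of the $12L_{k_l}^2$-neighborhood of the $y^l_j$'s, which is ultimately what fixes the constant $\uc{c:loops}$.
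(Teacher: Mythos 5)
Your proposal is correct and follows essentially the same strategy as the paper: (1) take the longest common prefix $\hat\omega$ of $\omega,\omega'$, (2) climb the tree from $A^i_l(\hat\omega)$ through the $\gamma_{\omega_k}$-paths and in-ball connectors to the shared vertex $x_l(\omega)=x_l(\omega')$ and back down to $A^{1-i}_l(\hat\omega)$, using \eqref{e:B_H_disj} and \eqref{e:gamma_H_disj} to keep the concatenated path clear of $H_l(\hat\omega)$, then (3) pull back by a rough inverse of $\phi^l_{\hat\omega}$, interpolate between consecutive images, and observe that the slack $20\uc{c:rough_trans}$ versus $12$ in the radii, together with the $4\uc{c:rough_trans}^2$-distortion, yields a path in $B(x_l,L^2_{k_l+1})$ from $A^0_l$ to $A^1_l$ avoiding $\bigcup_j B(y^l_j,12L^2_{k_l})$, contradicting \eqref{e:Hl_splits_As}. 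Your exponent $3/2$ in the diameter bookkeeping is in fact the one consistent with \eqref{e:xs_not_too_far} and the growth argument; the $2/3$ appearing in the paper's proof of this lemma is a typographical slip, so no discrepancy.
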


\begin{proof}
  We first fix $\uc{c:loops}$ large enough so that for $k \geq \uc{c:loops}$, one has
  \begin{equation}
    \label{e:large_c_loops}
    8 \uc{c:rough_trans} L_k^2 > 4 \uc{c:rough_trans}^2 (2\uc{c:rough_trans}+1).
  \end{equation}
  This specific choice will become clear later.

  Suppose that there are two words $\omega$ and $\omega'$, both of length $n_l$, for which
  \begin{equation}
    \label{eq:non-empty_intersection}
    x_l(\omega) = x_l(\omega')
  \end{equation}
  and let $\hat \omega$ be their closest common ancestor (in other words, $\hat{\omega}$ is the longest common prefix of $\omega$ and $\omega'$).
  Our aim is to build a path between $A^0_l$ and $A^1_l$, which is contained in $B(x_l, L_{k_l + 1}^2)$ and avoids the set $\bigcup_{j \leq J-1} B(y_j^l, 12 L_{k_l}^2)$.
  This will lead to a contradiction to \eqref{e:Hl_splits_As}, which we have obtained from negating Condition~\ref{cond:avoid_balls}.

  As a first step, we will construct a path $\sigma$ such that
  \begin{display}
    \label{e:make_handle}
    $\sigma$ is contained in $B(x_l, n_l L_{k_l + 1}^{3/2}+3\uc{c:rough_trans}n_l L_{k_l+1})$, connects $A^0_l(\hat{\omega})$ to $A^1_l(\hat{\omega})$\\
    and avoids the set $H_l(\hat{\omega})$.
  \end{display}
  Then we will use the rough inverse of $\phi^l_{\hat{\omega}}$ to ``map $\sigma$ to the desired path''.

  Before building $\sigma$, we start by constructing a path from $A^0_l(\hat{\omega})$ to $x_\omega$.
  In order to do this, we first write $\omega_0, \dots, \omega_n$ to be the sequence of prefixes of $\omega$, obtained by setting $\omega_0 = \hat{\omega}$ and adding one letter at a time until $\omega_n = \omega$.

  We start by observing that $A^0_l(\hat{\omega})$ can be connected to $x_l(\omega_1)$ by the path $\gamma_{\omega_1}$ which avoids $H_l(\hat{\omega})$ by \eqref{e:gamma_connects} and \eqref{e:gamma_H_disj}.
  Supposing by induction that we have already reached $x_l(\omega_j)$ for some $j < n$ by a path that avoids $H_l(\hat{\omega})$, we are now going to extend this path until $x_l(\omega_{j + 1})$.
  We know by \eqref{e:gamma_connects} and \eqref{e:gamma_H_disj} that if $\omega_{j+1} = \omega_j i$ ($i = 0,1$), then the path $\gamma_{\omega_{j+1}}$ connects $A^i_l(\omega_j)$ to $x_l(\omega_{j+1})$ while avoiding $H_l(\hat{\omega})$, therefore this is a good candidate for the extension we need.

  The obstacle to perform this extension comes from the fact that this path does not necessarily start at $x_l(\omega_j)$, in fact its starting point $\gamma_{\omega_{j+1}}(0)$ is somewhere in $A^i_l(\omega_j) \subseteq B(x_l(\omega_j), 3\uc{c:rough_trans} L_{k_l + 1})$, see Figure~\ref{f:crab_party}.
  But using the fact that this ball is connected and disjoint from $H_l(\hat{\omega})$ (by \eqref{e:B_H_disj}), we can connect $x_l(\omega_j)$ to $\gamma_{\omega_{j+1}}(0)$ and finally to $x_l(\omega_{j+1})$.

  Proceeding with this induction, we can construct the required path from $A^0_l(\hat{\omega})$ to $x_l(\omega)$ which avoids $H_l(\hat{\omega})$.
  We can also build a similar path from $A^1_l(\hat{\omega})$ to $x_\omega$ and by concatenating these two we have proved \eqref{e:make_handle}.

  We now use the path $\sigma$ obtained in \eqref{e:make_handle} to derive a contradiction to \eqref{e:Hl_splits_As}, finishing the proof of the lemma.
  For this, pick a $4 \uc{c:rough_trans}^2$-rough isometry $\psi$ which is a rough inverse of $\phi^l_{\hat{\omega}}$ as in \eqref{e:rough_inverse}.
  We now consider the image of the path $\sigma$ under the map $\psi$, obtaining a sequence of vertices $x_1, \dots, x_M$, for some suitable $M \geq 1$.

  This sequence does not necessarily constitute a path, however, by \eqref{e:rough_iso} we have
  \begin{display}
    \label{e:hopping_walk}
    $d(x_m, x_{m+1}) \leq 4 \uc{c:rough_trans}^2$ for every $m = 1, \dots, M-1$.
  \end{display}
  Recall that the path $\sigma$ connects $A^0_l(\hat{\omega})$ to $A^1_l(\hat{\omega})$, which are the images of $A^0_l$ and $A^1_l$ under $\phi^l_{\hat{\omega}}$.
  Therefore, the point $x_1$ (which is the image of the first point of $\sigma$) is within distance at most $ \uc{c:rough_trans}$ from $A^0_l$ (and similarly for $x_M$ and $A^1_l$).
  So we can add points $x_0 \in A^0_l$ and $x_{M+1} \in A^1_l$ to the sequence, without violating \eqref{e:hopping_walk}.

  We now use \eqref{e:hopping_walk} and the above property of $x_0$ and $x_{M + 1}$ to turn the sequence $(x_m)_{m=0}^{M+1}$ into a path by connecting $x_m$ to $x_{m+1}$, one by one, while using no more than $4 \uc{c:rough_trans}^2$ intermediate points to join each pair.
  This gives rise to a path $\sigma'$ for which we need to verify:
  \begin{enumerate} [\quad a)]
  \item $\sigma'$ connects $A^0_l$ to $A^1_l$,
  \item $\sigma'$ is contained in $B\bigl (x_l, \uc{c:rough_trans} + 4\uc{c:rough_trans} n_l^2 (L^{3/2}_{k_l + 1}+3\uc{c:rough_trans}L_{k_l+1})\bigr ) \subseteq B(x_l, L_{k_l + 1}^2)$,
  \item $\sigma'$ does not intersect the set $\bigcup_{j \leq J-1} B(y_j^l, 12 L_{k_l}^2)$.
  \end{enumerate}
  In fact, a) is a consequence of the construction of the path.
  The statement b) follows since $\psi$ is a $4\uc{c:rough_trans}^2$-rough isometry.
  Finally, to show c), we fix $y_j^l$ and $x \in B(y_j^l, 12 L_{k_l}^2)$ and, observing that
  \begin{equation}
    \label{e:phi_hat_omega_x_close}
    \phi_{\hat{\omega}}^l (x) \in B(y_j^l(\hat \omega), 12 \uc{c:rough_trans} L_{k_l}^2)
  \end{equation}
  we estimate
  \begin{equation*}
    \begin{split}
      d(\sigma', x) & \geq d \big( \sigma', \psi(\phi^l_{\hat \omega}(x)) \big) - d\big(\psi(\phi^l_{\hat \omega}(x)), x \big) \geq d \big( \psi(\sigma), \psi(\phi^l_{\hat \omega}(x)) \big) - \uc{c:rough_trans} - \uc{c:rough_trans}\\
      & \geq \frac{1}{4 \uc{c:rough_trans}^2} d(\sigma, \phi^l_{\hat \omega}(x)) - 1 - 2\uc{c:rough_trans} \overset{\eqref{e:phi_hat_omega_x_close}, \eqref{e:make_handle}}\geq \frac{1}{4 \uc{c:rough_trans}^2} (20 - 12) \uc{c:rough_trans} L_{k_l}^2 - 2\uc{c:rough_trans} - 1 \overset{\eqref{e:large_c_loops}}> 0.
    \end{split}
  \end{equation*}
  This finishes the proof that $\sigma'$ indeed contradicts \eqref{e:Hl_splits_As}, yielding the lemma.
\end{proof}

It is now very easy to finish the proof of the \nameref{lemma:joao}.

\begin{proof}[Proof of the \nameref{lemma:joao}]
  Supposing that $G$ does not satisfy Condition~\ref{cond:joao}, we know by Lemma~\ref{lemma:joao_2} that it does not satisfy Condition~\ref{cond:avoid_balls} either.
  This provides us with a sequence $(J, (k_l), (A^0_l), (A^1_l), (y^l_j))$ satisfying \eqref{e:bad_J}--\eqref{eq:evil_ys}.

Now consider all $l\geq 1$ such that $k_l\geq \max\{\uc{c:loops} , \uc{c:build_tree}\} $.
  Employing Lemma~\ref{lemma:induction}, we can construct (for each such $l $) the rough isometry $\phi^l_{\omega}$, for $|\omega| \leq n_l := \lfloor \log^2(L_{k_l}) \rfloor$ satisfying \eqref{e:xs_not_too_far}--~\eqref{e:gamma_H_disj}.

  Lemma~\ref{lemma:loops} now claims that the points $(x_l(\omega))_{|\omega| \leq n_l}$ obtained in the above construction are disjoint.
  However, there are $2^{n_l}$ such points and by \eqref{e:xs_not_too_far} they are all contained in the ball $B(o, n_l L^{3/2}_{k_l + 1})$.
  This contradicts the polynomial growth of $G$ assumed in \eqref{eq:volume_upper_bound}, finishing the proof of the \nameref{lemma:joao}.
\end{proof}


\section{Proof of Lemma \ref{lemma:lego}}
\label{s:lego}

To conclude the proof of Theorem~\ref{thm:p_c<1_dependent} we still need to show Lemma~\ref{lemma:lego}.
The main ideas of the proof are taken from \cite[Lemma 4.1]{2014arXiv1409.5923T}, which we report here for sake of clarity.
We split the proof into several auxiliary results, in order to make it more clear.

\begin{Remark}
For convenience we will prove an equivalent statement to that of Lemma~\ref{lemma:lego}, namely that for every fixed vertex $x\in V$ and $L$ large enough, under the above conditions one has $ \P[L < \text{diam}(\mathcal{C}_x) < \infty] \leq L^{-\theta}$ (recall that $\theta$ is arbitrary).
\end{Remark}

Given $x \in V$ we fix a path $\sigma:\N \to V$ that satisfies the following properties:
\begin{itemize}
\item[(i)] $ \displaystyle d(\sigma(i),\sigma(j))=|i-j|, \textnormal{ for all }i,j\in \N$;
\item[(ii)] $\displaystyle \sigma(0):=x$.
\end{itemize}
Recalling that $d(x,y)$ denotes the graph distance between vertices $x$ and $y$.
The existence of such paths will not be discussed here, but the interested reader is referred to \cite{Watkins1986341}.
(More precisely, such a path exists whenever the graph $G$ is infinite, locally finite, simple and connected.)
Now, given $\sigma$, define the following collection of points:
\[
x_{k,i}:=\sigma (i L_k/10), \textnormal{ for }k\geq 1 \textnormal{ and }i=0,\ldots, L_{k+1}/L_k,
\]
and, for some fixed $k_0\geq 1$ define the following event:
\[
\mathcal{G}_0(x):= \bigcap_{k\geq k_0}\bigcap_{i=1}^{L_{k+1}/L_k}\S(x_{k,i},L_k)^c.
\]
The next claim shows that for any fixed vertex $x$ the event $\mathcal{G}_0(x)$ occurs with high probability.

\begin{Claim}\label{claim:(4.14)}
  If for some integer $\gamma \geq 3$ we have $p_k \leq L_k^{-\beta}$ for every $k$, where $\beta > \gamma (1 + \theta)$, then the event $\mathcal{G}_0(x)$ occurs with probability bounded from below by $ 1-c L_{k_0}^{-\gamma \theta}$, where $c>0$ is a constant depending on $\theta$.
\end{Claim}

\begin{proof}
We show that $\P\left [\bigl (\mathcal{G}_0(x)\bigr )^c\right ]\leq c L_{k_0}^{-\gamma \theta}$.
In fact,
\[
\begin{split}
\P\left [\bigl (\mathcal{G}_0(x)\bigr )^c\right ]
& = \P \Bigl ( \bigcup_{k\geq k_0}\bigcup_{i=1}^{L_{k+1}/L_k}\S(x_{k,i},L_k)\Bigr )
\leq \sum_{k\geq k_0}\sum_{i=1}^{L_{k+1}/L_k}p_k
\leq \sum_{k\geq k_0}L_k^{\gamma-1}L_k^{-\beta}\\
& \stackrel{\beta>\gamma(1+\theta)}{\leq} \sum_{k\geq k_0}L_k^{-\theta \gamma} = L_{k_0}^{-\theta \gamma} \sum_{k\geq 0}(L_0^{-\theta \gamma} )^{\gamma^{k_0}(\gamma^k-1)}.
\end{split}
\]
Now, since we assumed $\gamma\geq 3$, we have $\gamma^{k_0}>1$ and $L_0^{-\theta \gamma}<1$.
Therefore, the sum $\sum_{k\geq 0}(L_0^{-\theta \gamma} )^{\gamma^{k_0}(\gamma^k-1)}$ converges, leading to the claim.
\end{proof}

The next auxiliary result shows that, on the event $\mathcal{G}_0(x)$, we can find several \emph{open} paths which can be connected, discovering an infinite (open) connected component.

\begin{Lemma}\label{lemma:(4.16)}
  On the event $\mathcal{G}_0(x)$, there is an infinite connected component that intersects the ball $B(x, L_{k_0} / 100)$.
\end{Lemma}

\begin{proof}
  We will prove that:
  \begin{display}
    For all $k\geq k_0$ there exists an open path $\sigma_k$ starting at $B(x, 3 L_k)$, contained in $B(x, 3 L_{k + 1})$ and having diameter at least $L_{k + 1}/100$.
  \end{display}

  For all $i = 0, \ldots, L_{k + 1} / L_k - 1$, we use the fact that by hypothesis (i.e., conditioning on $\mathcal{G}_0(x)$) we have that $ \S (x_{k, i}, L_k)^c$ is realized.
  Hence, we can find an \emph{open} path $\sigma_{k,i}\subseteq B(x_{k,i},3L_k^2)$ that connects the ball $B(x_{k,i}, L_k/100) $ to $B(x_{k,i+1}, L_k/100) $.
  If necessary, we truncate the paths $\sigma_k,i$ as soon as they exit the ball of radius $3 L_k$ centered at $x_{k,i}$.

  The next step consists in joining all such \emph{open} paths $ \sigma_{k,i}$'s into one (open) connected component.
  Therefore, we first estimate the diameters of such paths:
  \[
    \textnormal{diam}(\sigma_{k,i})\geq d \bigl ( B(x_{k,i}, L_k/100), B(x_{k,i+1}, L_k/100) \bigr )\geq d(x_{k,i},x_{k,i+1})-2 L_k/100\geq L_k/50.
  \]
  The last inequality follows from our choice of $\sigma$, in fact $d(x_{k,i},x_{k,i+1})\geq L_k/10$.
  Such a bound implies that before exiting the ball $ B(x_{k, i}, 3 L_k)$, the path $\sigma_{k, i}$ has diameter at least $L_k/100$.
  At this point, since we are under the assumption that $\mathcal{G}_0$ holds, we can find again \emph{open} paths $\gamma_{k,i}$ that join $\sigma_{k,i}$ with $\sigma_{k,i+1}$ (for all $i=0,\ldots,L_{k+1}/L_k-2$) that are contained inside the ball $B(x_{k,i}, 3L^2_k)$.

  Our next step is to join the $\sigma_{k,i}$'s and the $\gamma_{k,i}$'s in order to obtain longer open paths.
  Note that the paths $\gamma_{k,i}$ are necessary to avoid any issue coming from the fact that the balls $B(x_{k,i}, L_k/100)$ are not necessarily open.

  We now join such open paths, defining $\sigma_k$ that goes through $\sigma_{k,i}$ and $\gamma_{k,i}$ alternatingly, for \emph{all} values of $i=0,\ldots, L_{k+1}/L_k-1$.
  Now, by construction, we have
  \[
    \textnormal{diam}(\sigma_k)\geq d(x_{k,0},x_{k,L_{k+1}/L_k-1})-\frac{2L_k}{100}\geq \left ( \frac{L_{k+1}}{L_k}-1\right )\left ( \frac{L_k}{10}\right )- \frac{2L_k}{100}\geq \frac{L_{k+1}}{100}.
  \]
  At this point, observe that $\sigma_k$ can, a priori, lie inside the ball $B(\sigma (0), L_{k+1}+3L_k^2)$, which would not be enough for our purposes, as we need $\sigma_k$ to be contained inside $B(\sigma (0), 3L_{k+1})$.
  But by the assumption $\gamma>2$, we have that $B(\sigma (0), L_{k+1}+3L_k^2)\subset B(\sigma (0), 3L_{k+1})$, whenever $k$ is large enough (larger than some constant dependent on the value of $\gamma$).

  Now observe that since $\mathcal{G}_0$ is realized, for all $k$ large enough, the paths $\sigma_k$ and $\sigma_{k+1}$ must be on the same (open) connected component.
  In fact, since we are assuming $\S(x_{k,0},L_{k+1})^c$, before  $\sigma_k$ and $\sigma_{k+1}$ can find ``a way out'' from the ball $ B(x_{k,0}, L_{k+1})$, they will have already gained a diameter of at least $L_{k+1}/100$.

  Now the existence of all the $\sigma_k$, for $k \geq k_0$ and the fact that they all belong to the same connected component gives the statement.
\end{proof}

The next result gives a sufficient condition that will imply  Lemma~\ref{lemma:lego}.

\begin{Claim}\label{claim:(4.13)}
  Assuming that $\mathcal{G}_0(x)$ is realized (for some fixed  $x\in V$), there exists a unique infinite (open) connected component $\mathcal{C}_\infty$.
  Moreover, denoting by $\mathcal{C}_x$ the connected component of $x $, either $\mathcal{C}_x = \mathcal{C}_\infty$, or $\textnormal{diam}(\mathcal{C}_x) \leq L_{k_0}$.
\end{Claim}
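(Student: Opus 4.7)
My plan is to derive both the uniqueness of the infinite cluster and the stated dichotomy from a single statement: every open cluster of diameter strictly greater than $L_{k_0}^2$ must coincide with the cluster $\mathcal{C}_\infty$ constructed in Lemma~\ref{lemma:(4.16)}. Because any infinite open cluster has diameter greater than $L_{k_0}^2$, this implication immediately yields uniqueness.

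The first step is to record the properties of $\mathcal{C}_\infty$ extracted from the proof of Lemma~\ref{lemma:(4.16)}: for every $k \geq k_0$ and every index $i$ in the admissible range, the open path $\sigma_{k,i} \subseteq B(x_{k,i}, 3L_{k+1}^2)$ belongs to $\mathcal{C}_\infty$ and contains points of both $B(x_{k,i}, L_k^2/100)$ and $B(x_{k,i+1}, L_k^2/100)$. Consequently $\mathcal{C}_\infty$ contains a connected subset of $B(x_{k,i}, 3L_k)$ of diameter at least $L_k/100$, which will serve as the natural candidate for the set $A^0$ in the definition \eqref{e:separation_event} of a separation event centered at $x_{k,i}$.

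Next, suppose for contradiction that $\mathcal{C}_x$ is an open cluster with $\mathrm{diam}(\mathcal{C}_x) > L_{k_0}^2$ and $\mathcal{C}_x \neq \mathcal{C}_\infty$, so that the two clusters are disjoint. I pick $k \geq k_0$ to be the smallest integer with $\mathrm{diam}(\mathcal{C}_x) > L_k^2$ (any $k \geq k_0$ will do if $\mathcal{C}_x$ is infinite). I will then locate an index $i$ such that $\mathcal{C}_x$ also contains a connected subset of $B(x_{k,i}, 3L_k)$ of diameter at least $L_k/100$. With such witnesses at hand, taking $A^0 \subseteq \mathcal{C}_\infty$ and $A^1 \subseteq \mathcal{C}_x$ inside $B(x_{k,i}, 3L_k)$ as above, and noting that $d(A^0, A^1) > 1$ since the clusters are disjoint and separated by closed vertices, the event $\S(x_{k,i}, L_k)^c$ (which holds under $\mathcal{G}_0$) supplies an open path in $B(x_{k,i}, 3L_k^2)$ joining $A^0$ and $A^1$. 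Such a path merges $\mathcal{C}_\infty$ and $\mathcal{C}_x$ into the same component, contradicting their disjointness.

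The main technical obstacle is the selection of the good index $i$ in the preceding paragraph: the cluster $\mathcal{C}_x$ may lie far from the geodesic $\sigma$, yet we must guarantee that it enters some ball $B(x_{k,i}, 3L_k)$ in a piece of diameter at least $L_k/100$. I expect to overcome this by combining the lower bound $\mathrm{diam}(\mathcal{C}_x) > L_k^2$ with the spacing $d(x_{k,i}, x_{k,i+1}) = L_k^2/10$ of the centers along $\sigma$ and with the paving provided by Proposition~\ref{lemma:paving}, so that a sufficiently extended connected subset must traverse at least one such ball across a length $\geq L_k/100$. Carrying out this geometric pigeonhole argument cleanly, in a way that does not rely on features specific to $\mathbb{Z}^d$, will be the most delicate part of the proof.
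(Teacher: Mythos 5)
Your plan is essentially the paper's own argument, re-expressed with the dichotomy phrased as ``any open cluster of diameter $> L_{k_0}^2$ coincides with $\mathcal{C}_\infty$,'' and both halves of the claim do indeed reduce to that statement. The mechanism --- find two separated connected subsets of $B(x_{k,i}, 3L_k)$ of diameter at least $L_k/100$, one supplied by $\mathcal{C}_\infty$ and one by a putative second cluster $\mathcal{C}_x$, and then invoke $\S(x_{k,i}, L_k)^c$ (valid under $\mathcal{G}_0$) to obtain an open path inside $B(x_{k,i}, 3L_k^2)$ merging the two clusters, contradicting their disjointness --- is exactly the contradiction the paper reaches, though the paper states it far more tersely.

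The ``geometric pigeonhole'' step you regard as the delicate part is not actually a pigeonhole, and this is the one missing observation in your plan. Since $x_{k,0} = \sigma(0)$ for every $k$, and since one is free to choose the bi-infinite geodesic $\sigma$ to pass through the vertex $x$ under consideration (the conclusion of Lemma~\ref{lemma:lego} is a $\sup_{x}$ over probabilities, so $x$ is fixed before $\sigma$ and $\mathcal{G}_0$ are built), the cluster $\mathcal{C}_x$ contains $x_{k,0}$ trivially, while Lemma~\ref{lemma:(4.16)} already places $\mathcal{C}_\infty$ in a small ball around $x_{k,0}$. Taking $i = 0$ and $k$ the smallest index $\geq k_0$ with $\diam(\mathcal{C}_x) > L_k^2$, both clusters must exit $B(x_{k,0}, 3L_k)$ and therefore contribute connected subsets of $B(x_{k,0}, 3L_k)$ of diameter $\geq L_k/100$; no searching over $i$, no appeal to Proposition~\ref{lemma:paving}. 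For uniqueness of the infinite cluster one does not even need $x$ on $\sigma$: any two distinct infinite clusters eventually deposit pieces of the required size inside the growing balls $B(\sigma(0), 3L_k)$, forcing $\S(\sigma(0), L_k)$ for all large $k$ and contradicting $\mathcal{G}_0$. So your approach is correct and matches the paper; the step you leave open is closed simply by identifying the base point $x_{k,0}$ with the vertex $x$ of interest.
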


\begin{proof}
  First of all, observe that the infinite cluster has to be unique due to $\mathcal{G}_0(x)$, since the existence of two or more infinite components would imply that $S(x_{k,0},L_k)$ holds for all but finitely many $k$'s.

  Furthermore, the fact that either $\mathcal{C}_x = \mathcal{C}_\infty$ or $\textnormal{diam}(\mathcal{C}_x)\leq L_{k_0}$ is a consequence of the following observation.
  If $\textnormal{diam}(\mathcal{C}_x)> L_{k_0}$, but $\mathcal{C}_x\neq \mathcal{C}_\infty$, we would find two large separated components intersecting the ball $ B(x_{k,0}, L_{k_0})$.
  But this fact would contradict the assumption that $\mathcal{G}_0(x)$ occurs.
  Hence the statement is proven.
\end{proof}

Finally we have everything in place to prove Lemma~\ref{lemma:lego}.
\begin{proof}[Proof of Lemma~\ref{lemma:lego}]
By putting together Claims~\ref{claim:(4.14)} and \ref{claim:(4.13)} we obtain the first half of the Lemma.
%
Regarding the second part, given $\ell \geq 1$, pick $k(\ell)$ such that $L_{k(\ell)} \leq \ell < L_{k(\ell) + 1}$.
Observe also that for every value $\ell$ large enough, we have
\begin{equation}
  \begin{split}
    \P \bigl (\ell < & \textnormal{diam}(\mathcal{C}_x) < \infty \bigr ) \leq \P \bigl (L_{k(\ell)}<\textnormal{diam}(\mathcal{C}_x) <\infty \bigr )\\
    & \stackrel{\textnormal{Claims~\ref{claim:(4.14)},~\ref{claim:(4.13)}}}{\leq } \ cL_{k(\ell)}^{-\gamma \theta} \leq cL_{k(\ell) + 1}^{-\theta} \leq c' \ell^{-\theta}.
  \end{split}
\end{equation}
This concludes the proof of Lemma~\ref{lemma:lego}.
\end{proof}
\begin{Remark}
Note that we can omit the constant $c'=c'(\theta)$ by proving the result with a different value of $\theta$ and then considering $k$ large enough.
\end{Remark}

\section{Examples}\label{s:examples}

This section is devoted to giving some examples of dependent percolation processes for which our results apply.
These examples include loop soups, germ-grain models and divide and color percolation.

\subsection{Loop soups}

The model of loop soups was informally introduced by Symanzik in \cite{Sym69} and was rigorously defined in \cite{LW04} in the context of Brownian loops.
The model has been intensively studied, see for example \cite{zbMATH06093904} and \cite{zbMATH06340288}, displaying some very interesting percolation features, see \cite{2014arXiv1403.5687C}.

To properly define this model, we start by introducing a space of closed loops on $G$
\begin{equation*}
  W = \Big\{(x_0, \dots, x_{k-1}) \in V^k; k \geq 1, \text{$x_0 = x_{k-1}$ and $\{x_i, x_{i-1}\} \in E$ for all $i < k$} \Big\}.
\end{equation*}

We now fix a parameter $\kappa > 0$ and endow the countable space $W$ with the measure
\begin{equation}
  \label{e:loop_measure}
  \mu(w) = \frac{1}{k} \Big( \frac{1}{\Delta(1 + \kappa)} \Big)^k,
\end{equation}
where $k$ gives the length of the loop $w$ and $\Delta$ is the maximal degree of a vertex in $G$ (which is indeed finite under assumption \eqref{eq:volume_upper_bound}).

We define an equivalence relation on $W$, where we identify two loops (denoting this by $w \sim w'$) if they have the same path length $k$ and $w(i) = w'(i + j)$ for some $j \geq 1$, where the sum is taken on $\mathbb{Z}/(k \mathbb{Z})$.

Given the equivalence relation $\sim$, we define the space of unmarked loops $W^*$ as $W / \sim$ and define the push forward $\mu^*$ of $\mu$ under the canonical projection from $W$ to $W^*$.
The process we are interested in is a Poisson Point Process $\omega^\beta$ on $W^*$ with intensity $\beta \mu^*$, where $\beta > 0$ is a parameter controlling the amount of loops that enter the picture.

We will be interested in both the occupied and vacant set left by the loop soup, or more precisely:
$\mathcal{L}^\beta = \bigcup_{w \in \text{supp}(\omega^\beta)} \text{Range}(w)$ and $\mathcal{V}^\beta = V \setminus \mathcal{L}^\beta$.

Let us state a decoupling inequality for this model, inspired by the (2.15) of \cite{Szn09}.

\begin{Lemma}
  \label{l:decouple_loops}
  Fix $\beta, \kappa > 0$.
  Then, for $r \geq 1$, $J \geq 2$, points $x_1, x_2, \ldots,x_{J} \in V$ satisfying
  \[
  \min_{1\leq i < j\leq J}d(x_i,x_j)\geq 3r
  \]
  and for events $\mathcal{G}_1, \ldots, \mathcal{G}_{J} $ such that $ \mathcal{G}_i \in \sigma (Y_z,  z\in B(x_i, r))$ we have
  \begin{equation}\label{eq:(7.2)}
    \mathbb{P}(\mathcal{G}_1 \cap \dots \cap \mathcal{G}_J) - \mathbb{P}(\mathcal{G}_1) \cdots \mathbb{P}(\mathcal{G}_J) \leq 2 \beta J \exp\{- c(\kappa) r\}\overline{v}_G(r).
  \end{equation}
\end{Lemma}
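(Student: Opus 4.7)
The plan is to exploit the Poisson-process structure by decomposing $\omega^{\beta}$ according to loop length, separating the loops into \emph{short} ones that are geometrically confined and therefore contribute independently across the different balls $B(x_i,r)$, and \emph{long} ones that are exponentially rare. Fix a threshold $L$, to be chosen of order $r$ (the upshot being $L<r$), and write $\omega^\beta=\omega^\beta_s+\omega^\beta_l$, where $\omega^\beta_s$ (resp.\ $\omega^\beta_l$) is the restriction of $\omega^\beta$ to loops of length $<L$ (resp.\ $\geq L$); by the restriction property of Poisson point processes these two processes are independent. Let $\mathcal{A}_i:=\{\text{no loop in }\omega^\beta_l\text{ meets }B(x_i,r)\}$ and let $\tilde{\mathcal{G}}_i$ be the event obtained by replacing $Y_z$ with its short-loop analogue $Y^s_z:=\mathbf{1}\{z\in\mathcal{L}^\beta_s\}$ in the definition of $\mathcal{G}_i$; then $\tilde{\mathcal{G}}_i\in\sigma(\omega^\beta_s)$ and $\mathcal{G}_i=\tilde{\mathcal{G}}_i$ on $\mathcal{A}_i$.

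Two elementary observations drive the estimate. First, any loop of length $<L$ meeting $B(x_i,r)$ is contained in a ball of radius $r+L$ around $x_i$; the hypothesis $d(x_i,x_j)\geq 3r$ together with $L<r$ then ensures that no such short loop can also meet $B(x_j,r)$. Hence the portions of $\omega^\beta_s$ that determine the various $\tilde{\mathcal{G}}_i$ live on pairwise disjoint subsets of loop space and, by the Poisson property, the events $\tilde{\mathcal{G}}_1,\ldots,\tilde{\mathcal{G}}_J$ are mutually independent. Second, from $\mathcal{G}_i\triangle\tilde{\mathcal{G}}_i\subseteq\mathcal{A}_i^c$ one gets $|\P(\mathcal{G}_i)-\P(\tilde{\mathcal{G}}_i)|\leq\P(\mathcal{A}_i^c)$ and, by a union bound, $|\P(\mcap_i\mathcal{G}_i)-\P(\mcap_i\tilde{\mathcal{G}}_i)|\leq\sum_i\P(\mathcal{A}_i^c)$. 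Combining these with the elementary inequality $|\prod a_i-\prod b_i|\leq\sum|a_i-b_i|$ valid for $a_i,b_i\in[0,1]$ yields
\[
  \P\bigl(\mcap_{i\leq J}\mathcal{G}_i\bigr)-\prod_{i\leq J}\P(\mathcal{G}_i)\;\leq\;2\sum_{i=1}^{J}\P(\mathcal{A}_i^c).
\]

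Finally, bound $\P(\mathcal{A}_i^c)$ by Markov's inequality, using the pushforward relation $\mu^*=\mu\circ\pi^{-1}$ and the cyclic invariance of $\mu$ on $W$. The measure $\mu(w)=\frac{1}{k}(\Delta(1+\kappa))^{-k}$ is tailored so that the crude bound $\Delta^k$ on the number of closed walks of length $k$ from a given vertex is exactly compensated; after accounting for the $k$ cyclic shifts one obtains $\mu\{w:|w|=k,\,w\cap B(x_i,r)\neq\emptyset\}\leq |B(x_i,r)|(1+\kappa)^{-k}$. Summing this geometric series over $k\geq L$ and taking $L$ proportional to $r$ (which absorbs the polynomial prefactor $|B(x_i,r)|$ into a slightly smaller exponential rate) gives $\P(\mathcal{A}_i^c)\leq\beta\,e^{-c(\kappa)r}$, proving the lemma. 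The only delicate point is the choice of the threshold $L$, which must be small enough ($L<r$) so that short loops produce independent contributions on distinct balls, yet large enough that $(1+\kappa)^{-L}$ beats the polynomial volume factor $|B(x_i,r)|$; this balance is available precisely because $\mu$ already has geometric tail of rate $(1+\kappa)^{-1}$ in the loop length, which is the role of the parameter $\kappa$ in the model.
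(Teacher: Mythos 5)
Your proof is correct and follows essentially the same approach as the paper: replace each $\mathcal{G}_i$ by a version that depends only on a localized portion of the loop soup, use the Poisson restriction property to get independence across the disjoint localizations, and bound the resulting symmetric differences by the expected number of ``offending'' loops via the explicit form of $\mu$. The only cosmetic difference is the localization criterion (you truncate by loop length, the paper by loop range being contained in $B(x_i,3r/2)$); the two are interchangeable since a loop touching $B(x_i,r)$ and escaping $B(x_i,3r/2)$ must have length at least $r$.
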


\begin{proof}
  Let us first define the sets
  \begin{equation}
    W_i = \big\{ w \in W; \Range(w) \subseteq B(x_i, 3r/2) \big\}.
  \end{equation}
  We denote by $\omega^\beta_i$ the Poisson point process $\omega^\beta$ restricted to $W_i$, for $i = 1, \dots, J$.
  Note that the $\omega^\beta_i$'s are independent, since their supports $W_i$ sets are disjoint.

  Writing $\mathcal{G}'_i$ for the event $\mathcal{G}_i$ evaluated for the trimmed point process $\omega^\beta_i$, we can estimate
  \begin{equation}\label{eq:(7.4)}
    \begin{split}
      \big| \mathbb{P}(\mathcal{G}_1 & \cap \dots \cap \mathcal{G}_J) - \mathbb{P}(\mathcal{G}_1) \cdots \mathbb{P}(\mathcal{G}_J) \big|\\
      & \leq \big| \mathbb{P}(\mathcal{G}_1 \cap \dots \cap \mathcal{G}_J) - \mathbb{P}(\mathcal{G}'_1 \cap \dots \cap \mathcal{G}'_J) \big| + \big| \mathbb{P}(\mathcal{G}_1) \cdots \mathbb{P}(\mathcal{G}_J) - \mathbb{P}(\mathcal{G}'_1) \cdots \mathbb{P}(\mathcal{G}'_J) \big|\\
      & \leq 2 J \smash{\sup_i} \mathbb{P}[\mathcal{G}_i \Delta \mathcal{G}'_i] \leq 2 J \sup_i \mathbb{P}\Big[
      \begin{array}{c}
        \text{there is $w \in \text{supp}(\omega^\beta)$ intersecting}\\
        \text{both $B(x_i, r)$ and $B(x_i, 3r/2)^c$}
        \end{array}\Big].
    \end{split}
  \end{equation}
  In order to bound the last term in the above equation we make use of the definition of the intensity measure in \eqref{e:loop_measure}, finishing the proof of the lemma.
\end{proof}

We are now in position to state the first application of our main result.

\begin{Theorem}
  Given a $\uc{c:rough_trans}$-roughly transitive graph $G$ satisfying $\mathcal{V}(c_u, d_u)$ and $\mathcal{I}(c_i, d_i)$ for some $d_i > 1$ and fix $\kappa > 0$, define the Poisson Point Processes on $G$ as above.
  Then,
  \begin{enumerate}[\quad a)]
  \item For $\beta > 0$ small enough, almost surely the set $\mathcal{L}^\beta$ contains no infinite connected component, while $\mathcal{V}^\beta$ contains a unique one.
  \item On the other hand, if $\beta > 0$ is large enough, almost surely there exists an infinite cluster in $\mathcal{L}^\beta$, whereas $\mathcal{V}^\beta$ is composed solely of finite components.
  \end{enumerate}
\end{Theorem}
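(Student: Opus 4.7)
The plan is to apply Theorem~\ref{thm:p_c<1_dependent} and Theorem~\ref{thm:p_c>0_dependent} to the indicator processes of $\mathcal{L}^\beta$ and $\mathcal{V}^\beta$, treating the two regimes as mirror images of each other. Throughout, set $Y_x := \mathbf{1}\{x \in \mathcal{L}^\beta\}$; then $1-Y_x = \mathbf{1}\{x \in \mathcal{V}^\beta\}$ and the two fields generate the same local $\sigma$-algebras, so any decoupling statement for $(Y_x)$ transfers to $(1-Y_x)$.

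First I would verify that the law of $(Y_x)_{x\in V}$ satisfies $\mathcal{D}(\alpha,c_\alpha)$ for every $\alpha > 0$. The preceding lemma bounds, for any two local events, the quantity $\mathbb{P}(\mathcal{G}\cap\mathcal{G}') - \mathbb{P}(\mathcal{G})\mathbb{P}(\mathcal{G}')$ by $4\beta e^{-c(\kappa)r}$, the argument being to discard loops of diameter comparable to $r$. Exactly the same proof, now discarding the loops that simultaneously meet $B(x,r)$ and $B(x,2r)^c$---such loops have length at least $r$ and their expected number is at most $c_u r^{d_u}\sum_{k\geq r}(1+\kappa)^{-k}/k$ by $\mathcal{V}(c_u,d_u)$, hence exponentially small in $r$---gives the analogous estimate in the geometry of Definition~\ref{def:decoupling}. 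A case split on whether $\mathbb{P}(\mathcal{G}')\geq e^{-c(\kappa)r/2}$ then converts this difference bound into the multiplicative form $\mathbb{P}(\mathcal{G}\cap\mathcal{G}')\leq(\mathbb{P}(\mathcal{G})+c_\alpha r^{-\alpha})\mathbb{P}(\mathcal{G}')$, with $c_\alpha$ depending on $\alpha$, $\beta$ and $\kappa$ but not on $r$, since $e^{-cr/2}$ is dominated by $c_\alpha r^{-\alpha}$ once $c_\alpha$ is chosen large enough.

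Next I would compute the marginals. Since $\omega^\beta$ is a Poisson point process on $W^*$ with intensity $\beta\mu^*$,
\begin{equation*}
  \mathbb{P}[x \in \mathcal{V}^\beta] = \exp\bigl\{-\beta\,\lambda(x)\bigr\}, \qquad \lambda(x) := \mu^*\bigl(\{w^* : x \in w^*\}\bigr).
\end{equation*}
Grouping each unmarked loop with its marked representatives and using that the number of closed walks of length $k$ through $x$ is at most $\Delta^k$ (with $\Delta$ an upper bound on the degree, guaranteed by $\mathcal{V}(c_u,d_u)$ applied at radius $1$), one gets $\lambda(x) \leq \sum_{k\geq 2}(1+\kappa)^{-k}$, a finite constant depending only on $\kappa$; a matching positive lower bound comes from the length-$2$ back-and-forth loop along any edge at $x$. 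Consequently $\mathbb{P}[x\in\mathcal{L}^\beta] = 1 - e^{-\beta\lambda(x)}$ tends to $0$ uniformly in $x$ as $\beta\to 0$ and to $1$ uniformly in $x$ as $\beta\to\infty$.

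With these two ingredients in place, both parts of the theorem follow. For part (a), pick $\beta$ small enough that $\sup_{x}\mathbb{P}[Y_x=1]<p_{**}$; Theorem~\ref{thm:p_c>0_dependent} applied to $(Y_x)$ yields the absence of an infinite cluster in $\mathcal{L}^\beta$. Shrinking $\beta$ further if necessary also forces $\inf_x\mathbb{P}[1-Y_x=1]>p_*$, and Theorem~\ref{thm:p_c<1_dependent}, applied to $(1-Y_x)$ using that $G$ is roughly transitive and satisfies $\mathcal{V}(c_u,d_u)$ together with $\mathcal{I}(c_i,d_i)$ for some $d_i>1$, delivers the unique infinite vacant cluster. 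Part (b) is the symmetric statement, obtained by interchanging the roles of $Y_x$ and $1-Y_x$ and letting $\beta\to\infty$. The main obstacle I anticipate is purely technical: repackaging the exponential decoupling of the preceding lemma into the precise product form of Definition~\ref{def:decoupling} uniformly in the events, which requires the truncation argument sketched above; once that is done, the rest is a direct application of the main theorems.
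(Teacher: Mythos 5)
Your strategy is the right one, and it matches what the paper implicitly intends: establish decoupling and marginal estimates for the loop soup, then feed both $Y_x=\mathbf{1}\{x\in\mathcal{L}^\beta\}$ and its complement into Theorems~\ref{thm:p_c<1_dependent} and \ref{thm:p_c>0_dependent}. Your marginal computation is correct, and the application of the two main theorems in the two regimes is exactly right.

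There is, however, a gap in the step where you convert the paper's additive decoupling lemma into the strict multiplicative form $\mathcal{D}(\alpha,c_\alpha)$. Your case split handles $\mathbb{P}(\mathcal{G}')\geq e^{-c(\kappa)r/2}$ cleanly, but in the complementary case $\mathbb{P}(\mathcal{G}')<e^{-c(\kappa)r/2}$ there is no argument: from the additive bound one only gets $\mathbb{P}(\mathcal{G}\cap\mathcal{G}')\leq \mathbb{P}(\mathcal{G})\mathbb{P}(\mathcal{G}')+4\beta e^{-cr}$, and the error term may vastly exceed $c_\alpha r^{-\alpha}\mathbb{P}(\mathcal{G}')$ when $\mathbb{P}(\mathcal{G}')$ is tiny; the trivial bound $\mathbb{P}(\mathcal{G}\cap\mathcal{G}')\leq\mathbb{P}(\mathcal{G}')$ would need $\mathbb{P}(\mathcal{G})+c_\alpha r^{-\alpha}\geq 1$, which fails for large $r$. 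So your sketch does not establish $\mathcal{D}(\alpha,c_\alpha)$ as stated in Definition~\ref{def:decoupling}. The cleaner route---and the one the paper's lemma is manifestly designed for---is to bypass $\mathcal{D}(\alpha,c_\alpha)$ entirely: the only place the decoupling enters the main proofs is through the $J$-fold estimate of Proposition~\ref{claim:lemma4.2}, which is then fed into the recursion of Lemma~\ref{l:cascade_decays}. The paper's loop-soup lemma gives, directly and uniformly in the events, the additive bound $\mathbb{P}(\mathcal{G}_1\cap\cdots\cap\mathcal{G}_J)\leq\prod_j\mathbb{P}(\mathcal{G}_j)+2\beta J e^{-c(\kappa)r}$, and one should substitute this into the recursion in place of Proposition~\ref{claim:lemma4.2}. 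At scale $r=3L_k^2$ the additive error $2\beta J' e^{-3c(\kappa)L_k^2}$ is super-polynomially small and is dominated by the target $L_{k+1}^{-\beta'}$ for $k$ large, so the induction in Lemma~\ref{l:cascade_decays} closes exactly as before (with any choice of $\alpha$, since the exponential decay trivially beats the polynomial constraints \eqref{e:alpha_star} and \eqref{e:alpha_double_star}). If you prefer to keep the multiplicative form, you would instead need to exploit the Poisson structure more carefully---for instance, decompose $\omega^\beta$ into independent pieces supported on loops inside and outside $B(x,3r/2)$ and control the boundary-crossing loops; but even this requires a monotonicity argument and is more delicate than your case split suggests.
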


This result allows us to define two critical values corresponding to the appearance of infinite clusters in $\mathcal{L}^\beta$ and $\mathcal{V}^\beta$.

\begin{proof}
The proof of the first part of the above theorem follows directly from Theorems~\ref{thm:p_c<1_dependent} and \ref{thm:p_c>0_dependent} once we apply Lemma~\ref{l:decouple_loops} (note that the decoupling provided by Lemma~\ref{l:decouple_loops} improves as we decrease $\beta$).

Let us now turn to the proof of the second part of the above theorem.
Note first that the hypothesis $\mathcal{V}(c_u, d_u)$ implies that $G$ has uniformly bounded degrees.
The assumptions $\mathcal{V}(c_u, d_u)$ and $\mathcal{I}(c_i, d_i)$ allow us to apply Theorem~\ref{thm:p_c_Bernoulli} on $G$.
All these observations together imply that for Bernoulli independent percolation on $G$ we have $p_c(G) \in (0, 1)$.
Note that we will not make use of our results for dependent percolation for this part of the proof.

We have concluded so far that, for $p$ close enough to one,
\begin{display}
the open set resulting from Bernoulli percolation contains almost surely an infinite connected component, while the closed set consists almost surely of only finite clusters.
\end{display}
It is clear that both events above are monotone increasing, hence it suffices to show that
\begin{display}
  \label{e:Bernoulli_dominates_loops}
  for any $p \in (0, 1)$, there exists $\beta(p) > 0$ such that the law of $\mathcal{L}^\beta$ stochastically dominates a Bernoulli i.i.d~percolation with parameter $p$.
\end{display}
(Subsequently, the comparison with Bernoulli percolation will conclude the proof.)

To finish we observe that the above claim can be derived by considering solely the loops with zero length in $\mathcal{L}^\beta$, which are independent due to the Poisson character of this percolation.
This finishes the proof of the second case.
\end{proof}

\begin{Remark}
  Note that some of the above arguments are more general than for Loop Soups only, in fact most of the above should work for any \emph{Germ-Grain} model.
  These models are defined as a decorated Poisson Point process, where each point gets associated with a random object to be inserted in the graph.
  Under conditions that the random objects have sufficiently light tails (for instance, exponentially bounded), then the above proof should work equally well for such models.
\end{Remark}

\subsection{Divide and color}
The divide and color model was introduced by H\"{a}ggstr\"{o}m in \cite{haggstrom_coloring}, and it is a process that is governed by two parameters ($p, q \in [0, 1]$) and evolves in two steps.
In this section we will follow the description in \cite{BBT13}, to which the reader is referred for more details and further results.
\begin{enumerate}
\item Firstly we perform a Bernoulli percolation on the edges of $G$, i.e. each edge of the given graph is retained with probability $p$, independently of each other.
  This partitions the vertices of $G$ into clusters, corresponding to the connected components induced by open edges.
\item Secondly we color the resulting connected components either \emph{black} or \emph{white} with probability $q$ or $1-q$ respectively, independently for distinct components.
  All vertices of a component take the same color, which induces dependence in this site percolation model.
\end{enumerate}

The rest of this section is devoted to proving that the decoupling condition \eqref{e:decouple_various} holds true for this model under some conditions on the parameter $p$.
In order to do so, we need to introduce some further notation.

We start by defining a Bernoulli percolation by associating at each edge $e$ a random variable $\eta \in \{0,1\}$ that takes value $\eta(e) = 1$ with probability $p$ and $\eta(e) = 0$ with probability $1 - p$.
Given such an assignment, the vertices of $G$ can be split into clusters and we associate a random variable $\xi(\mathcal{C}) \in \{0,1\}$ to each \emph{connected component} $\mathcal{C}$ determined above.
The variables $\xi(\mathcal{C})$ are i.i.d. and satisfy $\P(\xi = \textnormal{black}) = q$ and $\P(\xi = \textnormal{white}) = 1 - q$.

Finally, we re-open all edges (essentially forgetting the variables $\eta(e)$) and we ask ourselves whether there exists an infinite cluster of black sites in the above coloring.

Let $\mu_{p,q}$ denote the measure governing the site-percolation process as described above.
Then \cite[Proposition 2.5]{haggstrom_coloring} assures that for any graph $G$ and any $p \in [0, 1]$ there exists a critical value $q_\star^G(p)\in [0,1]$ such that
\[
\mu_{p,q} (\text{there exists an infinite black }q\text{-cluster})
= \left \{
\begin{array}{ll}
=0 & \text{ if }q < q_\star^G(p),\\
>0 & \text{ if }q > q_\star^G(p).
\end{array}
\right .
\]

It is clear that if $p > p_c(G)$, or in other words if the first stage of the process can lead to an infinite cluster, then $q_\star(p) = 0$, since for every positive $q$ there is a chance that the cluster containing the origin is infinite and is painted black.
Therefore, one can focus on the subcritical and critical phases $p \leq p_c(G)$.

On the subcritical phase, there is a strong belief that the size of a typical cluster should have exponential tails.
To make this more precise, let us define the critical value for ``strong subcriticality''.
We set
\begin{equation}
 \overline{p}_\ast:= \overline{p}_\ast (G) := \sup\Bigl \{p \in [0,1]; \text{ for some $\theta > 0$, } \mathbb{P}_p[\diam(\mathcal{C}_o) \geq n] \leq \exp\{\theta n\}\Bigr \}.
\end{equation}
Note that $ \overline{p}_\ast$ is independent of the reference vertex $o$.

It is clear that $ \overline{p}_\ast$ is smaller than $p_c$ and it is commonly believed that $p_c =  \overline{p}_\ast$ for a large variety of graphs.
This equality has been proved for the $d$-dimensional lattice in \cite{aizenman1987} and \cite{zbMATH03996823} and later extended to transitive graphs in \cite{AV08} and \cite{DCopinTassion15}.

Another important observation is that for any graph $G$ with degrees bounded by $\Delta$, we have $ \overline{p}_\ast \geq 1/\Delta$, as one can easily prove by a counting path argument.

Intuitively speaking, once $p <   \overline{p}_\ast$, then the clusters are small and the dependence of the \emph{divide and color} model should be short-ranged.
This is made precise in the following proposition.

\begin{Proposition}
  Fix a graph $G$ of sub-exponential growth, and let $p <  \overline{p}_\ast(G)$.
  Then, for any $\alpha > 0$ there exists a constant $c_\alpha = c_\alpha(G, p, \alpha, J)$ for which the condition \eqref{e:decouple_various} holds for the divide and color model on $G$ for any $q \in [0,1]$.

  As a consequence, if $G$ is roughly transitive, has polynomial growth and isoperimetric dimension larger than one, then $0<q_\star ^G(p) < 1$.
\end{Proposition}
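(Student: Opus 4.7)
The plan is to directly establish the multi-point decoupling \eqref{e:decouple_various} for the divide and color measure $\mu_{p,q}$, in exactly the same spirit as the loop soup subsection above, and then feed this bound into (the proof of) Theorem~\ref{thm:p_c<1_dependent}, which in fact only invokes \eqref{e:decouple_various} via Proposition~\ref{claim:lemma4.2}. Fix $y_1,\dots,y_J\in V$ at pairwise distance at least $3r$ and events $\mathcal{G}_i\in\sigma(Y_z:z\in B(y_i,r))$, and set $B_i^+:=B(y_i,5r/4)$; since $5r/4+5r/4<3r$ the balls $B_i^+$ are pairwise disjoint. I introduce the \emph{good event} $G=\bigcap_{i=1}^J G_i$, where $G_i$ asserts that no Bernoulli-percolation cluster meeting $B(y_i,r)$ exits $B_i^+$. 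Because $p<p_\ast(G)$, the diameters of clusters have exponential tails $\P_p[\diam\mathcal{C}_v\geq r/4]\leq e^{-\theta r/4}$, so a union bound over $v\in B(y_i,r)$ and over $i$ gives
\[
\P(G^c)\leq J\, c_u r^{d_u}e^{-\theta r/4}=:\varepsilon_r,
\]
which decays faster than any polynomial in $r$.

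The heart of the argument is a coupling between $Y$ and, for each $i$, a \emph{localized} coloring $(\tilde Y^{(i)}_v)_{v\in B_i^+}$ defined as follows: (a) keep only the edges of $\omega$ with both endpoints in $B_i^+$; (b) for every cluster of this restricted configuration that coincides with a full $\omega$-cluster (i.e.\ a cluster contained in $B_i^+$) inherit its original color $\xi(\mathcal{C})$; (c) for the remaining truncated restricted clusters, use fresh independent colors $\eta^{(i)}_{\tilde{\mathcal{C}}}$. Let $\tilde{\mathcal{G}}_i$ be $\mathcal{G}_i$ evaluated on $\tilde Y^{(i)}$. By construction, on $G_i$ every cluster of $v\in B(y_i,r)$ lies in $B_i^+$, so $\tilde Y^{(i)}_v=Y_v$ and hence $\tilde{\mathcal{G}}_i=\mathcal{G}_i$ on $G$. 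Moreover the events $\tilde{\mathcal{G}}_1,\dots,\tilde{\mathcal{G}}_J$ are mutually independent: they depend on disjoint sets of edges (the $B_i^+$ are disjoint), and their inherited colors $\xi(\mathcal{C})$ come from clusters $\mathcal{C}\subseteq B_i^+$ which cannot simultaneously lie in any $B_j^+$ with $j\neq i$, while the fresh colors $\eta^{(i)}$ are independent across $i$.

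Assembling everything,
\[
\P\Big(\mcap_{i}\mathcal{G}_i\Big)\leq \P\Big(\mcap_{i}\tilde{\mathcal{G}}_i\Big)+\P(G^c)=\prod_i\P(\tilde{\mathcal{G}}_i)+\varepsilon_r\leq\prod_i(\P(\mathcal{G}_i)+\varepsilon_r)+\varepsilon_r,
\]
using $\P(\tilde{\mathcal{G}}_i)\leq\P(\mathcal{G}_i)+\P(G_i^c)\leq\P(\mathcal{G}_i)+\varepsilon_r$. Expanding and using $\varepsilon_r\leq 1$ bounds this by $\prod_i\P(\mathcal{G}_i)+(2^J+1)\varepsilon_r$, while on the other hand $\prod_i(\P(\mathcal{G}_i)+c_\alpha r^{-\alpha})-\prod_i\P(\mathcal{G}_i)\geq (c_\alpha r^{-\alpha})^J$. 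Since $\varepsilon_r$ is exponentially small, for each $\alpha>0$ one may choose $c_\alpha=c_\alpha(p,\alpha,J)$ so large that $(c_\alpha r^{-\alpha})^J\geq (2^J+1)\varepsilon_r$ for every $r\geq 1$, establishing \eqref{e:decouple_various}. Picking any $\alpha>\alpha_\ast$, Theorem~\ref{thm:p_c<1_dependent} then applies to $\mu_{p,q}$, whose marginals satisfy $\mu_{p,q}[Y_v=1]=q$ uniformly in $v$; so for $q$ close enough to $1$ there is almost surely an infinite black cluster, which gives $q_\ast^G(p)<1$.

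The main obstacle is getting the coupling right so that the two requirements on $\tilde Y^{(i)}$ hold simultaneously: equality with $Y$ on the good event and independence across $i$. Splitting the color rule into the two cases (b) and (c) is exactly what achieves this; once this bookkeeping is in place, the remaining quantitative step is the comparison between the exponential quantity $\varepsilon_r$ and the polynomial $r^{-\alpha}$, which is routine.
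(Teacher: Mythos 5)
Your proof is correct and follows essentially the same strategy as the paper's: replace the single dependent coloring by $J$ localized colorings that are mutually independent, and control the error by an exponential estimate on the probability that some cluster meeting $B(y_i,r)$ escapes a slightly larger ball, which is available since $p<p_\ast$. The only difference is in the bookkeeping: you keep one edge configuration $\omega$ and restrict it to the $B_i^+$, assigning fresh colors to the truncated restricted clusters, whereas the paper draws $J+1$ i.i.d.\ edge configurations $\eta_0,\dots,\eta_J$, pastes them into a ``mixed'' configuration $\eta_{\mathrm{mix}}$ that uses $\eta_j$ inside $B(x_j,3r/2)$, and couples the colors of clusters contained in $B(x_j,3r/2)$ between $Y^j$ and $Y^{\mathrm{mix}}$. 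Both constructions produce the same additive comparison $\big|\P(\cap_i\mathcal{G}_i)-\prod_i\P(\mathcal{G}_i)\big|\lesssim J e^{-\theta r}$ and then upgrade it to the multiplicative form \eqref{e:decouple_various} by absorbing the exponentially small error into $c_\alpha r^{-\alpha}$. One small point worth tidying in your write-up: deciding whether a restricted cluster in $B_i^+$ ``coincides with a full $\omega$-cluster'' requires looking at edges with exactly one endpoint in $B_i^+$, not only those you kept in step (a); this does not harm the independence claim because the $B_i^+$ are at mutual distance $\geq r/2$, so these boundary edge sets are still disjoint across $i$, but the measurability statement should be phrased accordingly (or, more simply, replace criterion (b) by ``the restricted cluster does not touch $\partial_i B_i^+$'', which is measurable with respect to the interior edges and still coincides with the full cluster on the good event $G_i$).
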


\begin{proof}
  Given $x_1, \dots, x_{J} \in V$ at mutual distance at least $ 3r$ and $r \geq 1$, we are going to construct a simple decoupling of what happens in the various regions $B(x_j, r)$.
  For this, we define $J + 1$ independent percolation measures on $G$.
  More precisely, let $(\eta_j(e))_{e \in E}$ be independent Bernoulli variables (all of them i.i.d. with parameter $p$), for $j = 0, 1, \dots, J$.
  We also define a mixed configuration $\eta_\textnormal{mix}$ which is given by
  \begin{equation}
     \eta_\textnormal{mix}(e) =
     \begin{cases}
       \eta_j(e), \qquad & \text{if $e \subset B(x_j, 3r/2)$ and }\\
       \eta_0(e), & \text{otherwise.}
     \end{cases}
  \end{equation}
  We now use the above configurations to construct $J + 1$ instances of the \emph{divide and color} model, which will be denoted by $(Y^j_x)_{x \in V}$, $j = 1, \dots, J$ and $j =$ mix.

  Obviously, they use the clusters determined by their respective edge configuration $\eta^j$ defined above.
  Moreover, we add the restriction that if a given cluster of $\eta_j$ is contained in $B(x_j, 3r/2)$ (in which case it coincides with that of $\eta_\textnormal{mix}$), then both $Y^j$ and $Y^\textnormal{mix}$ will assign the same color to this cluster during the coloring stage.

  Note that $Y^\textnormal{mix}$ has the correct law of the model, and we are now in position to prove that it satisfies \eqref{e:decouple_various}.
To this purpose, start by observing that for any sequence of fixed events $\mathcal{G}_1, \dots, \mathcal{G}_J$ chosen as in Proposition~\ref{claim:lemma4.2}, relation \eqref{e:decouple_various} is guaranteed whenever
\[
\mathbb{P} \big(\mathcal{G}_1  \cap \dots \cap \mathcal{G}_J \big) - \mathbb{P} \big( \mathcal{G}_1\big) \cdots \mathbb{P}\big( \mathcal{G}_J \big)\leq c_\alpha^J r^{-J\alpha}.
\]
Then estimate
  \begin{equation}
    \begin{split}
      \mathbb{P} \Big(\mathcal{G}_1(Y^\textnormal{mix}) & \cap \dots \cap \mathcal{G}_J(Y^\textnormal{mix}) \Big) - \mathbb{P} \Big( \mathcal{G}_1(Y^1)\Big) \cdots \mathbb{P}\Big(\mathcal{G}_J(Y^J) \Big)\\
      & \leq \mathbb{P} \big( Y^\textnormal{mix}_x \neq Y^j_x, \text{ for some $j \leq J$, $x \in B(x_j, r)$} \big)\\
      & \leq \mathbb{P} \big( \text{for some $j$, an open path in $\eta_j$ connects $B(x_j, r)$ to $B(x_j, 3r/2)$} \big)\\
      & \overset{p <  \overline{p}_\ast}\leq J \exp\{-\theta r\}\overline{v}_G(r).
    \end{split}
  \end{equation}
  This finishes the proof of the proposition by properly choosing the constant $c_\alpha$ (cf.\ also Remark~\ref{remark:c-alpha}).
\end{proof}

\subsection{Slow decay of dependence}
\label{ss:elipses}

Let us briefly comment on the decay of correlation that we have assumed on the law $\mathbb{P}$.
It has been proved in \cite{BLPS97}, Theorem~1.1 that if $G$ is an amenable Cayley graph, then for any $p < 1$ there exists some invariant percolation law $\bar{\mathbb{P}}$ on $G$ such that $\bar{\mathbb{P}}^*[Y_o = 1] > p$ but the set $\{x; Y_x = 1\}$ does not percolate.
In contrast with this statement, Theorem~\ref{thm:p_c<1_dependent} states the existence of an absolute value $p_\ast$ above which every percolation law satisfying $\mathcal{D}(\alpha, c_\alpha)$ admits a unique infinite open cluster.
This distinction is clearly a consequence of the quantitative decay of correlations that we have assumed through $\mathcal{D}(\alpha, c_\alpha)$.

A natural question at this point is about the sharpness of Theorem~\ref{thm:p_c<1_dependent}.
For instance, is it true that Theorem~\ref{thm:p_c<1_dependent} still holds true if we replace the polynomial decay assumption by some slower one?
To shed some light on this question, let us mention an example from \cite{TW10b}.
It consists of a family of dependent percolation measures $(\mathbb{P}^u)_{u > 0}$ that satisfy a polynomial decay of correlations.
However, the exponent $\alpha$ appearing in the decay is not sufficiently high, so that for all $u > 0$, there is $\mathbb{P}^u$-a.s. no percolation for $\{x; Y_x = 1\}$, despite the fact that $\mathbb{P}^u [Y_o = 1]$ converges to one as $u$ tends to zero.

More precisely, in \cite{TW10b} the authors define a Poisson process on $\R^d$ which determines a set of lines passing through the space.
The intensity of this process is given by a non-trivial Haar measure on the space of lines, which invariant under translations and rotations, unique up to scaling.

Having defined this process of lines, one removes from $\mathbb{R}^d$ the cylinders of radius one and axis centered in these lines.
The resulting set is called $\mathcal{V}$.
By varying the intensity of the Poisson process, a phase transition in the percolation of $\mathcal{V}$ occurs for all $d \geq 3$, see \cite[Theorems 4.1 and 5.1]{TW10b} and \cite{HST12}.

In our setting we look at the intersection of $\mathcal{V}$ and $\R^2$, where $\mathcal{V}\subset \R^3$.
In this case, the cylinders intersected with the plane consist of ellipses with random major axis size.
In Proposition~5.6 of \cite{TW10b}), they show that, for every intensity $u > 0$ of the Poisson process,
there is no infinite component in $\mathcal{V} \cap \mathbb{R}^2$.
On the other hand, the model satisfies a condition very similar to $\mathcal{D}(\alpha, c_\alpha)$ with $\alpha = 2$, see Lemma~3.3 of \cite{TW10b}.

\appendix
\section{Appendix (proof of Corollary~\ref{thm:p_c<1})}
\label{s:appendix}

In view of Theorem~\ref{thm:p_c_Bernoulli}, it is enough to show that $G$ satisfies an isoperimetric inequality of the form \eqref{eq:isoperimetric} for some suitable $d_i > 1$.
In particular, it suffices to show the statement for any connected set of $G$.

Recall that $d'>1$ is the lower bound on the polynomial order of the growth of the graph.
We first choose $\delta > 0$ such that
\begin{equation}
  \label{e:choose_delta_d_prime}
  (1 - \delta) \frac{1 + d'}{2} > 1
\end{equation}
and observe from \cite{CPC:1771424} that for every finite connected set $S \subseteq V$,
\begin{equation}
  \label{e:iso_and_diam}
  |\partial S| \geq \frac{|S|}{\diam(S) + 1}.
\end{equation}
We now split the proof into two cases:

{\bf Case 1 ($\diam(S) \leq |S|^{1 - \delta}$)}
This case is trivially dealt with using \eqref{e:iso_and_diam}.
In fact, it suffices to take $d_i$ small enough such that
\[
\frac{d_i-1}{d_i}<\delta.
\]
Also, $c_i$ can be chosen to be $1/2$.

{\bf Case 2 ($\diam(S) \geq |S|^{1 - \delta}$)}
In this case, we let $x, y \in S$ be two points realizing the diameter of $S$ and let $\gamma \subseteq S$ be a path connecting $x=\gamma(t_0)$ to $y$.
We now let
\begin{equation}
  \begin{split}
    t_0 & = 0,\\
    t_1 & = \inf \{t \geq 0; d(\gamma_t, \gamma_0) \geq \sqrt{\diam(S)}\},\\
    t_i & = \inf \{t \geq 0; d(\gamma_t, \{\gamma_0, \dots, \gamma_{t_{i - 1}}\}) \geq \sqrt{\diam(S)}\}.
  \end{split}
\end{equation}
We note that 
\begin{itemize}
\item[(i)] By the above definition, there are order $\left (\sqrt{\diam(S)}\right ) $ points in $\gamma$ needed to connect $x$ to $y$;
\item[(ii)] The balls $B(\gamma_{t_i}, \sqrt{\diam(S)}/3)$ are disjoint.
\end{itemize}
Thus:
\begin{equation}
  \sum_{i = 1}^{\sqrt{\diam(S)}} |B(\gamma_{t_i}, \sqrt{\diam(S)}/3)| \geq c' \sqrt{\diam(S)} \sqrt{\diam(S)}^{d'} \geq c' \diam(S)^{(1 + d')/2}.
\end{equation}

From \eqref{e:choose_delta_d_prime} and the fact that $\diam(S) \geq |S|^{1 - \delta}$, we note that
\begin{equation}
  c' \diam(S)^{(1 + d')/2} \geq 2 |S|.
\end{equation}
This means that at least half of the balls $B(\gamma_{t_i}, \sqrt{\diam(S)}/3)$ must have a point in $\partial S$.
Therefore, for a suitable constant $c>0$ we obtain:
\[
|\partial S| \geq c\sqrt{\diam(S)} \stackrel{\text{Case 2}}{\geq } c |S|^{(1 - \delta)/2 }.
\]
In this case, by taking $1<d_i\leq (1/2+\delta)^{-1}$ we obtain the isoperimetric inequality \eqref{eq:isoperimetric}, concluding the proof.


\begin{thebibliography}{10}

\bibitem{aizenman1987}
M.~Aizenman and D.~J. Barsky.
\newblock Sharpness of the phase transition in percolation models.
\newblock {\em Comm. Math. Phys.}, 108(3):489--526, 1987.

\bibitem{zbMATH06138623}
R.~G. {Alves}, A.~{Procacci}, and R.~{Sanchis}.
\newblock {Percolation on infinite graphs and isoperimetric inequalities.}
\newblock {\em {J. Stat. Phys.}}, 149(5):831--845, 2012.

\bibitem{AV08}
T.~Antunović and I.~Veselić.
\newblock Sharpness of the phase transition and exponential decay of the
  subcritical cluster size for percolation on quasi-transitive graphs.
\newblock {\em Journal of Statistical Physics}, 130(5):983--1009, 2008.

\bibitem{CPC:1771424}
L.~Babai and M.~Szegedy.
\newblock Local expansion of symmetrical graphs.
\newblock {\em Combin. Probab. Comput.}, 1(1):1--11, 1992.

\bibitem{zbMATH01224777}
E.~{Babson} and I.~{Benjamini}.
\newblock {Cut sets and normed cohomology with applications to percolation.}
\newblock {\em {Proc. Am. Math. Soc.}}, 127(2):589--597, 1999.

\bibitem{BBT13}
A.~B{\'a}lint, V.~Beffara, and V.~Tassion.
\newblock On the critical value function in the divide and color model.
\newblock {\em ALEA Lat. Am. J. Probab. Math. Stat.}, 10(2):653--666, 2013.

\bibitem{zbMATH06243708}
I.~{Benjamini}.
\newblock {\em {Coarse geometry and randomness. \'Ecole d'\'Et\'e de
  Probabilit\'es de Saint-Flour XLI -- 2011.}}
\newblock Cham: Springer, 2013.

\bibitem{BLPS97}
I.~Benjamini, R.~Lyons, Y.~Peres, and O.~Schramm.
\newblock Group-invariant percolation on graphs.
\newblock {\em Geom. Funct. Anal.}, 9(1):29--66, 1999.

\bibitem{BS96}
I.~Benjamini and O.~Schramm.
\newblock Percolation beyond {$\bold Z^d$}, many questions and a few answers.
\newblock {\em Electron. Comm. Probab.}, 1:no.\ 8, 71--82 (electronic), 1996.

\bibitem{bollobas2006percolation}
B.~Bollob{\'a}s and O.~Riordan.
\newblock {\em Percolation}.
\newblock Cambridge University Press, New York, 2006.

\bibitem{zbMATH00846204}
J.~{Bricmont}, J.~L. {Lebowitz}, and C.~{Maes}.
\newblock {Percolation in strongly correlated systems: The massless Gaussian
  field.}
\newblock {\em {J. Stat. Phys.}}, 48(5-6):1249--1268, 1987.

\bibitem{PSP:2048852}
S.~R. Broadbent and J.~M. Hammersley.
\newblock Percolation processes.
\newblock {\em Mathematical Proceedings of the Cambridge Philosophical
  Society}, 53:629--641, 7 1957.

\bibitem{BK89}
R.~M. Burton and M.~Keane.
\newblock Density and uniqueness in percolation.
\newblock {\em Comm. Math. Phys.}, 121(3):501--505, 1989.

\bibitem{2014arXiv1403.5687C}
Y.~Chang and A.~Sapozhnikov.
\newblock Phase transition in loop percolation.
\newblock {\em Probab. Theory Related Fields}, 164(3-4):979--1025, 2016.

\bibitem{DCopinTassion15}
H.~Duminil-Copin and V.~Tassion.
\newblock A new proof of the sharpness of the phase transition for {B}ernoulli
  percolation and the {I}sing model.
\newblock {\em Comm. Math. Phys.}, 343(2):725--745, 2016.

\bibitem{elek}
G.~Elek and G.~Tardos.
\newblock On roughly transitive amenable graphs and harmonic {D}irichlet
  functions.
\newblock {\em Proc. Amer. Math. Soc.}, 128(8):2479--2485, 2000.

\bibitem{note_authomorphisms}
C.~D. Godsil, W.~Imrich, N.~Seifter, M.~E. Watkins, and W.~Woess.
\newblock A note on bounded automorphisms of infinite graphs.
\newblock {\em Graphs Combin.}, 5(4):333--338, 1989.

\bibitem{Gri99}
G.~Grimmett.
\newblock {\em Percolation}, volume 321 of {\em Grundlehren der Mathematischen
  Wissenschaften [Fundamental Principles of Mathematical Sciences]}.
\newblock Springer-Verlag, Berlin, second edition, 1999.

\bibitem{MR1765172}
O.~H{\"a}ggstr{\"o}m.
\newblock Markov random fields and percolation on general graphs.
\newblock {\em Adv. in Appl. Probab.}, 32(1):39--66, 2000.

\bibitem{haggstrom_coloring}
O.~H{\"a}ggstr{\"o}m.
\newblock Coloring percolation clusters at random.
\newblock {\em Stochastic Process. Appl.}, 96(2):213--242, 2001.

\bibitem{HJ06}
O.~H{\"a}ggstr{\"o}m and J.~Jonasson.
\newblock Uniqueness and non-uniqueness in percolation theory.
\newblock {\em Probab. Surv.}, 3:289--344 (electronic), 2006.

\bibitem{HST12}
M.~R. Hil{\'a}rio, V.~Sidoravicius, and A.~Teixeira.
\newblock Cylinders' percolation in three dimensions.
\newblock {\em Probab. Theory Related Fields}, 163(3-4):613--642, 2015.

\bibitem{kleiner}
B.~Kleiner.
\newblock A new proof of {G}romov's theorem on groups of polynomial growth.
\newblock {\em J. Amer. Math. Soc.}, 23(3):815--829, 2010.

\bibitem{zbMATH05229215}
G.~{Kozma}.
\newblock {Percolation, perimetry, planarity.}
\newblock {\em {Rev. Mat. Iberoam.}}, 23(2):671--676, 2007.

\bibitem{LW04}
G.~F. Lawler and W.~Werner.
\newblock The brownian loop soup.
\newblock {\em Probability Theory and Related Fields}, 128(4):565--588, 2004.

\bibitem{zbMATH06093904}
Y.~{L}e {J}an.
\newblock {Amas de lacets markoviens.}
\newblock {\em {C. R., Math., Acad. Sci. Paris}}, 350(13-14):643--646, 2012.

\bibitem{zbMATH06340288}
Y.~{L}e {J}an and S.~{L}emaire.
\newblock {Markovian loop clusters on graphs.}
\newblock {\em {Ill. J. Math.}}, 57(2):525--558, 2013.

\bibitem{zbMATH01018381}
T.~M. Liggett, R.~H. Schonmann, and A.~M. Stacey.
\newblock {Domination by product measures.}
\newblock {\em {Ann. Probab.}}, 25(1):71--95, 1997.

\bibitem{losert}
V.~Losert.
\newblock On the structure of groups with polynomial growth.
\newblock {\em Math. Z.}, 195(1):109--117, 1987.

\bibitem{lyons_1995}
R.~Lyons.
\newblock Random walks and the growth of groups.
\newblock {\em C. R. Acad. Sci. Paris S\'er. I Math.}, 320(11):1361--1366,
  1995.

\bibitem{LP11}
R.~Lyons and Y.~Peres.
\newblock {\em Probability on Trees and Networks}.
\newblock Cambridge University Press, New York, 2016.
\newblock Available at \url{http://pages.iu.edu/~rdlyons/}.

\bibitem{lyons-schramm}
R.~Lyons and O.~Schramm.
\newblock Indistinguishability of percolation clusters.
\newblock {\em Ann. Probab.}, 27(4):1809--1836, 1999.

\bibitem{zbMATH03996823}
M.~{Men'shikov}.
\newblock {Coincidence of critical points in percolation problems.}
\newblock {\em {Sov. Math., Dokl.}}, 33:856--859, 1986.

\bibitem{percolation_Grigorchuk}
R.~Muchnik and I.~Pak.
\newblock Percolation on {G}rigorchuk groups.
\newblock {\em Comm. Algebra}, 29(2):661--671, 2001.

\bibitem{zbMATH05636419}
G.~{Pete}.
\newblock {A note on percolation on $\mathbb Z^d$: isoperimetric profile via
  exponential cluster repulsion.}
\newblock {\em {Electron. Commun. Probab.}}, 13:377--392, 2008.

\bibitem{pete_book}
G.~Pete.
\newblock Probability and geometry on groups.
\newblock in preparation, 2017.

\bibitem{raoufi-yadin}
A.~Raoufi and A.~Yadin.
\newblock {Indicable groups and $p_c<1$}.
\newblock {\em Electron. Commun. Probab.}, 22(13):1--10.

\bibitem{Sabidussi64}
G.~Sabidussi.
\newblock Vertex-transitive graphs.
\newblock {\em Monatshefte f{\"u}r Mathematik}, 68(5):426--438, 1964.

\bibitem{shalom-tao}
Y.~Shalom and T.~Tao.
\newblock A finitary version of {G}romov's polynomial growth theorem.
\newblock {\em Geom. Funct. Anal.}, 20(6):1502--1547, 2010.

\bibitem{Sym69}
K.~Symanzik.
\newblock Euclidean quantum field theory.
\newblock {\em Scuola internazionale di Fisica {``}Enrico Fermi{''}, Corso
  XLV}, pages 152--226, 1969.

\bibitem{Szn09}
A.-S. Sznitman.
\newblock Vacant set of random interlacements and percolation.
\newblock {\em Ann. of Math. (2)}, 171(3):2039--2087, 2010.

\bibitem{2014arXiv1409.5923T}
A.~Teixeira.
\newblock Percolation and local isoperimetric inequalities.
\newblock {\em Probab. Theory Related Fields}, 165(3-4):963--984, 2016.

\bibitem{trofimov}
V.~I. Trofimov.
\newblock Graphs with polynomial growth.
\newblock {\em Mat. Sb. (N.S.)}, 123(165)(3):407--421, 1984.

\bibitem{TW10b}
J.~Tykesson and D.~Windisch.
\newblock Percolation in the vacant set of {P}oisson cylinders.
\newblock {\em Probab. Theory Related Fields}, 154(1-2):165--191, 2012.

\bibitem{Watkins1986341}
M.~E. Watkins.
\newblock Infinite paths that contain only shortest paths.
\newblock {\em Journal of Combinatorial Theory, Series B}, 41(3):341 -- 355,
  1986.

\bibitem{W00}
W.~Woess.
\newblock {\em Random walks on infinite graphs and groups}, volume 138 of {\em
  Cambridge Tracts in Mathematics}.
\newblock Cambridge University Press, Cambridge, 2000.

\end{thebibliography}

\def\cprime{$'$}

\end{document}